% A bicategory of decorated cospans 
% Kenny Courser
% first arXiv version - May 24 2016
% third arxiv version and first submission to TAC
\documentclass[oneside]{amsart}
\usepackage{amssymb,amsmath,stmaryrd,txfonts,mathrsfs,amsthm}

\usepackage[all,2cell]{xy}
\usepackage[neveradjust]{paralist}
\usepackage{hyperref}
\usepackage{mathtools}
\usepackage{multirow}
\usepackage[outline]{contour}
\contourlength{1.2pt}
\usepackage{tikz}
\usepackage{tikz-cd}
\usepackage{xcolor}
\usepackage{framed,color}
\usetikzlibrary{matrix,arrows,decorations.pathmorphing,positioning}
\usetikzlibrary{intersections,decorations.markings}
\usetikzlibrary{arrows,positioning,fit,matrix,shapes.geometric,external}
\usetikzlibrary{backgrounds,circuits,circuits.ee.IEC,shapes,fit,matrix}
\makeatletter
\let\ea\expandafter

\definecolor{shadecolor}{rgb}{1,0.8,0.3}
\definecolor{myurlcolor}{rgb}{0.6,0,0}
\definecolor{mycitecolor}{rgb}{0,0,0.8}
\definecolor{myrefcolor}{rgb}{0,0,0.8}
\hypersetup{colorlinks, linkcolor=myrefcolor, citecolor=mycitecolor, urlcolor=myurlcolor}

\tikzset{->-/.style={decoration={
  markings,
  mark=at position .5 with {\arrow{>}}},postaction={decorate}}}

%% Defining commands that are always in math mode.
\def\mdef#1#2{\ea\ea\ea\gdef\ea\ea\noexpand#1\ea{\ea\ensuremath\ea{#2}}}
\def\alwaysmath#1{\ea\ea\ea\global\ea\ea\ea\let\ea\ea\csname your@#1\endcsname\csname #1\endcsname
  \ea\def\csname #1\endcsname{\ensuremath{\csname your@#1\endcsname}}}

\newcommand{\define}[1]{{\bf \boldmath{#1}}}

% Script letters

% Calligraphic letters

% blackboard bold letters
\newcommand{\lA}{\ensuremath{\mathbb{A}}}
\newcommand{\lB}{\ensuremath{\mathbb{B}}}
\newcommand{\lC}{\ensuremath{\mathbb{C}}}
\newcommand{\lD}{\ensuremath{\mathbb{D}}}
\newcommand{\lE}{\ensuremath{\mathbb{E}}}

% bold letters

% fraktur letters
\newcommand{\fa}{\ensuremath{\mathfrak{a}}}

\newcommand{\fs}{\ensuremath{\mathfrak{s}}}

\newcommand{\fu}{\ensuremath{\mathfrak{u}}}

\newcommand{\fx}{\ensuremath{\mathfrak{x}}}

\mdef\fahat{\hat{\fa}}

% Underline letters

% bars

%\newcommand{\hbar}{\ensuremath{\overline{h}}} % whoops, \hbar means something else!

%\newcommand{\obar}{\ensuremath{\overline{o}}}

% tildes

% Hats

\newcommand{\fhat}{\ensuremath{\hat{f}}}

%% FONTS AND DECORATION FOR GREEK LETTERS

%% the package `mathbbol' gives us blackboard bold greek and numbers,
%% but it does it by redefining \mathbb to use a different font, so that
%% all the other \mathbb letters look different too.  Here we import the
%% font with bb greek and numbers, but assign it a different name,
%% \mathbbb, so as not to replace the usual one.
\DeclareSymbolFont{bbold}{U}{bbold}{m}{n}
\DeclareSymbolFontAlphabet{\mathbbb}{bbold}

% greek with bars

% greek with hats

% greek with checks

% greek with tildes

% MISCELLANEOUS SYMBOLS
\mdef\del{\partial}
\mdef\delbar{\overline{\partial}}

\mdef\hf{\textstyle\frac{1}{2}}
\mdef\thrd{\textstyle\frac{1}{3}}
\mdef\qtr{\textstyle\frac{1}{4}}

\newcommand{\id}{\mathm{id}}

\SelectTips{cm}{}
\newdir{ >}{{}*!/-10pt/@{>}}    % extra spacing for tail arrows in XYpic

\mdef\Id{\mathrm{Id}}
\mdef\id{\mathrm{id}}
\alwaysmath{ell}
\alwaysmath{infty}
\alwaysmath{odot}
\def\frc#1/#2.{\frac{#1}{#2}}   % \frc x^2+1 / x^2-1 .
\mdef\ten{\mathrel{\otimes}}
\mdef\bigten{\bigotimes}
\mdef\sqten{\mathrel{\boxtimes}}
\def\pow(#1,#2){\mathop{\pitchfork}(#1,#2)} % powers and
                    % copowers

%% OPERATORS

%% ARROWS
% \to already exists
\newcommand{\too}[1][]{\ensuremath{\overset{#1}{\longrightarrow}}}

\mdef\we{\overset{\sim}{\longrightarrow}}
\mdef\leftwe{\overset{\sim}{\longleftarrow}}

\let\maps\colon
       % \maps for left arrows

%% EXTENSIBLE ARROWS

% See Voss' Mathmode.tex for instructions on how to create new
% extensible arrows.
\def\rightarrowtailfill@{\arrowfill@{\Yright\joinrel\relbar}\relbar\rightarrow}
\newcommand\xrightarrowtail[2][]{\ext@arrow 0055{\rightarrowtailfill@}{#1}{#2}}

\def\twoheadrightarrowfill@{\arrowfill@{\relbar\joinrel\relbar}\relbar\twoheadrightarrow}
\newcommand\xtwoheadrightarrow[2][]{\ext@arrow 0055{\twoheadrightarrowfill@}{#1}{#2}}

% Let's leave the left-going ones until I need them.

%% EXTENSIBLE SLASHED ARROWS
% Making extensible slashed arrows, by modifying the underlying AMS code.
% Arguments are:
% 1 = arrowhead on the left (\relbar or \Relbar if none)
% 2 = fill character (usually \relbar or \Relbar)
% 3 = slash character (such as \mapstochar or \Mapstochar)
% 4 = arrowhead on the left (\relbar or \Relbar if none)
% 5 = display mode (\displaystyle etc)
\def\slashedarrowfill@#1#2#3#4#5{%
  $\m@th\thickmuskip0mu\medmuskip\thickmuskip\thinmuskip\thickmuskip
   \relax#5#1\mkern-7mu%
   \cleaders\hbox{$#5\mkern-2mu#2\mkern-2mu$}\hfill
   \mathclap{#3}\mathclap{#2}%
   \cleaders\hbox{$#5\mkern-2mu#2\mkern-2mu$}\hfill
   \mkern-7mu#4$%
}
% Here's the idea: \<slashed>arrowfill@ should be a box containing
% some stretchable space that is the "middle of the arrow".  This
% space is created as a "leader" using \cleader<thing>\hfill, which
% fills an \hfill of space with copies of <thing>.  Here instead of
% just one \cleader, we use two, with the slash in between (and an
% extra copy of the filler, to avoid extra space around the slash).
\def\rightslashedarrowfill@{%
  \slashedarrowfill@\relbar\relbar\mapstochar\rightarrow}
\newcommand\xslashedrightarrow[2][]{%
  \ext@arrow 0055{\rightslashedarrowfill@}{#1}{#2}}
\mdef\hto{\xslashedrightarrow{}}
\mdef\htoo{\xslashedrightarrow{\quad}}

%% To get a slashed arrow in XYpic, do
% \[\xymatrix{A \ar[r]|-@{|} & B}\]

% ISOMORPHISMS

% SHADOWS

% THEOREM-TYPE ENVIRONMENTS, hacked to
%% (a) number all with the same numbers, and
%% (b) have the right names for autoref
\def\defthm#1#2{%
  \newtheorem{#1}{#2}[section]%
  \expandafter\def\csname #1autorefname\endcsname{#2}%
  \expandafter\let\csname c@#1\endcsname\c@thm}
\newtheorem{thm}{Theorem}[section]

\defthm{cor}{Corollary}
\defthm{prop}{Proposition}
\defthm{lem}{Lemma}
\defthm{sch}{Scholium}
\defthm{assume}{Assumption}
\defthm{claim}{Claim}
\defthm{conj}{Conjecture}
\defthm{hyp}{Hypothesis}
\defthm{fact}{Fact}
\theoremstyle{definition}
\defthm{defn}{Definition}
\defthm{notn}{Notation}
\theoremstyle{remark}
\defthm{rmk}{Remark}
\defthm{eg}{Example}
\defthm{egs}{Examples}
\defthm{ex}{Exercise}
\defthm{ceg}{Counterexample}

% How to get QED symbols inside equations at the end of the statements
% of theorems.  AMS LaTeX knows how to do this inside equations at the
% end of *proofs* with \qedhere, and at the end of the statement of a
% theorem with \qed (meaning no proof will be given), but it can't
% seem to combine the two.  Use this instead.
\def\thmqedhere{\expandafter\csname\csname @currenvir\endcsname @qed\endcsname}

% Number numbered lists as (i), (ii), ...

%% Labeling that keeps track of theorem-type names.  Superseded by
%% autoref from hyperref, as above, but we keep this in case we are
%% using a journal style file that is incompatible with hyperref.
% 
% \ifx\SK@label\undefined\let\SK@label\label\fi
% \let\your@thm\@thm
% \def\@thm#1#2#3{\gdef\currthmtype{#3}\your@thm{#1}{#2}{#3}}
% \def\xlabel#1{{\let\your@currentlabel\@currentlabel\def\@currentlabel
% {\currthmtype~\your@currentlabel}
% \SK@label{#1@}}\label{#1}}
% \def\xref#1{\ref{#1@}}

% Also number formulas with the theorem counter
\let\c@equation\c@thm
\numberwithin{equation}{section}

% Only show numbers for equations that are actually referenced (or
% whose tags are specified manually) - uses mathtools.
\mathtoolsset{showonlyrefs,showmanualtags}

%% Fix enumerate spacing with paralist.  This has two parts:
%%   1. enable mixing of "old spacing" lists with those adjusted by paralist
%%   2. allow us to specify a number based on which to adjust the spacing
%% For the first, use \killspacingtrue when you want the spacing
%% adjusted, then \killspacingfalse to turn adjustment off.  For the
%% second, use \maxenum=14 to set the widest number you want the
%% spacing to be calculated with.
\newlength\oldleftmargini       % save old spacing
\newlength\oldleftmarginii
\newlength\oldleftmarginiii
\newlength\oldleftmarginiv
\newlength\oldleftmarginv
\newlength\oldleftmarginvi
\newcount\maxenum
\maxenum=7
\newif\ifkillspacing
\def\@adjust@enum@labelwidth{%
  \advance\@listdepth by 1\relax
  \ifkillspacing                % do the paralist thing
    \csname c@\@enumctr\endcsname\maxenum
    \settowidth{\@tempdima}{%
      \csname label\@enumctr\endcsname\hspace{\labelsep}}%
    \csname leftmargin\romannumeral\@listdepth\endcsname
      \@tempdima
  \else                         % otherwise, reset it
    \csname fixspacing\romannumeral\@listdepth\endcsname
  \fi
  \advance\@listdepth by -1\relax}
% these commands, one for each enum level (I couldn't get a generic
% one to work), test whether oldleftmargin has been set yet, and if
% not, set it from leftmargin; otherwise, they reset leftmargin to
% it.  Just setting oldleftmargin to leftmargin in the preamble
% doesn't seem to work.
\def\fixspacingi{\ifnum\oldleftmargini=0\setlength\oldleftmargini\leftmargini\else\setlength\leftmargini\oldleftmargini\fi}
\def\fixspacingii{\ifnum\oldleftmarginii=0\setlength\oldleftmarginii\leftmarginii\else\setlength\leftmarginii\oldleftmarginii\fi}
\def\fixspacingiii{\ifnum\oldleftmarginiii=0\setlength\oldleftmarginiii\leftmarginiii\else\setlength\leftmarginiii\oldleftmarginiii\fi}
\def\fixspacingiv{\ifnum\oldleftmarginiv=0\setlength\oldleftmarginiv\leftmarginiv\else\setlength\leftmarginiv\oldleftmarginiv\fi}
\def\fixspacingv{\ifnum\oldleftmarginv=0\setlength\oldleftmarginv\leftmarginv\else\setlength\leftmarginv\oldleftmarginv\fi}
\def\fixspacingvi{\ifnum\oldleftmarginvi=0\setlength\oldleftmarginvi\leftmarginvi\else\setlength\leftmarginvi\oldleftmarginvi\fi}

%% Fix paralist references, so that we can refer to (1) instead of
%% just 1.
\def\pl@label#1#2{%
  \edef\pl@the{\noexpand#1{\@enumctr}}%
  \pl@lab\expandafter{\the\pl@lab\csname yourthe\@enumctr\endcsname}%
  \advance\@tempcnta1
  \pl@loop}
\def\@enumlabel@#1[#2]{%
  \@plmylabeltrue
  \@tempcnta0
  \pl@lab{}%
  \let\pl@the\pl@qmark
  \expandafter\pl@loop\@gobble#2\@@@
  \ifnum\@tempcnta=1\else
    \PackageWarning{paralist}{Incorrect label; no or multiple
      counters.\MessageBreak The label is: \@gobble#2}%
  \fi
  \expandafter\edef\csname label\@enumctr\endcsname{\the\pl@lab}%
  \expandafter\edef\csname the\@enumctr\endcsname{\the\pl@lab}%
  \expandafter\let\csname yourthe\@enumctr\endcsname\pl@the
  #1}

% GREEK LETTERS, ETC.
\alwaysmath{alpha}
\alwaysmath{beta}
\alwaysmath{gamma}
\alwaysmath{Gamma}
\alwaysmath{delta}
\alwaysmath{Delta}
\alwaysmath{epsilon}
\mdef\ep{\varepsilon}
\alwaysmath{zeta}
\alwaysmath{eta}
\alwaysmath{theta}
\alwaysmath{Theta}
\alwaysmath{iota}
\alwaysmath{kappa}
\alwaysmath{lambda}
\alwaysmath{Lambda}
\alwaysmath{mu}
\alwaysmath{nu}
\alwaysmath{xi}
\alwaysmath{pi}
\alwaysmath{rho}
\alwaysmath{sigma}
\alwaysmath{Sigma}
\alwaysmath{tau}
\alwaysmath{upsilon}
\alwaysmath{Upsilon}
\alwaysmath{phi}
\alwaysmath{Pi}
\alwaysmath{Phi}
\mdef\ph{\varphi}
\alwaysmath{chi}
\alwaysmath{psi}
\alwaysmath{Psi}
\alwaysmath{omega}
\alwaysmath{Omega}

\makeatother

% Local Variables:
% mode: latex
% TeX-master: ""
% End:

%\input{decls}
\UseAllTwocells
\title{A bicategory of decorated cospans}
\author{Kenny Courser}
\mdef\cMod{\mathcal{M}\mathit{od}}
\mdef\cCat{\mathcal{C}\mathit{at}}
\mdef\cTwocat{2\text{-}\mathcal{C}\mathit{at}}
\mdef\cBicat{\mathcal{B}\mathit{icat}}
\mdef\lMod{\mathbb{M}\mathsf{od}}
\mdef\lnCob{n\mathbb{C}\mathsf{ob}}
\mdef\lProf{\mathbb{P}\mathsf{rof}}
\mdef\cDbl{\mathcal{D}\mathit{bl}}
\mdef\fchk{\check{f}}
\mdef\conj{\Yleft}
\mdef\Conj{\mathcal{C}\mathit{onj}}

\begin{document}
\maketitle
\begin{center}   
  %{\bf A bicategory of decorated cospans \\}   
  %\vspace{0.3cm}
  %{\em Kenny Courser \\}
  %\vspace{0.3cm}
  {\small Department of Mathematics  \\
    University of California  \\
  Riverside CA, USA 92521  \\ }
  \vspace{0.3cm}   
  {\small email:  courser@math.ucr.edu\\} 
  \vspace{0.3cm}   
  {\small \today}
  \vspace{0.3cm}   
\end{center}   

\begin{abstract}
\noindent
If $\mathbf{C}$ is a category with pullbacks then there is a bicategory with the same objects as $\mathbf{C}$, spans as morphisms, and maps of spans as 2-morphisms, as shown by Benabou. Fong has developed a theory of `decorated cospans', which are cospans in $\mathbf{C}$ equipped with extra structure. This extra structure arises from a symmetric lax monoidal functor $F \colon \mathbf{C} \to \mathbf{D}$; we use this functor to `decorate' each cospan with apex $N \in \mathbf{C}$ with an element of $F(N)$. Using a result of Shulman, we show that when $\mathbf{C}$ has finite colimits, decorated cospans are morphisms in a symmetric monoidal bicategory. We illustrate our construction with examples from electrical engineering and the theory of chemical reaction networks.
\end{abstract}

\section{Introduction}
Networks are becoming increasingly important in applied mathematics and engineering, and developing a general theory of networks will require new ideas connecting these subjects to category theory.  We think of a network with some inputs $X$ and outputs $Y$ as a cospan $X \rightarrow N \leftarrow Y$ in some category $\bold{C}$, and compose these cospans using pushouts.  Typically, however, the apex $N$ is equipped with some extra structure, so we also need a way to compose the extra structures. This was recently developed by Fong  \cite{Fon}, who gave a general recipe for constructing `decorated cospan categories' and functors between these. Baez, Fong and Pollard have used decorated cospans to prove new results about electrical circuits and Markov processes \cite{BF,BFP,Pol}.

However, besides asking whether two networks are equal, it makes sense to ask if they are isomorphic.   Thus, cospans are not merely morphisms in a category, but morphisms in a bicategory---indeed, this example appeared already in Benabou's original paper on bicategories \cite{Be}.   In fact, if $\bold{C}$ is a category with finite colimits, Stay has proved there is a symmetric monoidal bicategory whose morphisms are cospans in $\bold{C}$ \cite{Stay}.  For applications to network theory, we need to generalize this result to decorated cospans.  Stay's result, which allows $\bold{C}$ to be a 2-category, used Hoffnung's work on tricategories \cite{Hoff}, but for our purposes an easier approach is to use Shulman's technique for constructing symmetric monoidal bicategories \cite{Shul}.  This involves first constructing a symmetric monoidal pseudo double category.

Another work aimed towards the application of double categories is that of Lerman and Spivak \cite{Lerm}. Following Brockett \cite{Bro}, they model an open dynamical system as a smooth map between manifolds $F \maps Q \to TM$ which when composed with the projection $\pi_{M} \colon TM \to M$ gives a surjective submersion $p \maps Q \to M$. When $Q = M$ and this surjective submersion is the identity, an open dynamical system reduces to a smooth vector field on $M$, which is an ordinary dynamical system. They construct a monoidal double category $\bold{SSub^\square}$ whose objects are surjective submersions, and a lax monoidal double functor from $\bold{SSub^\square}$ to the monidal double category $\bold{RelVect^\square}$ of vector spaces, linear maps, linear relations and inclusions of relations assigning to each surjective submersion $p \maps Q \to M$ the vector space of open dynamical systems having this as their underlying surjective submersion. The horizontal morphisms in $\bold{SSub^\square}$ are `dynamical morphisms', which give maps of open dynamical systems; the vertical morphisms in $\bold{SSub^\square}$ are `interconnection morphisms', which can be used to describe the process of connecting open dynamical systems and the 2-morphisms are certain commuting squares. Lerman and Spivak recover one of the main results of a previous work by DeVille and Lerman \cite{DeLerm} in which a `fibration of networks of manifolds' gives a 2-morphism in $\bold{RelVect^\square}$. This is just a special case of the above result in which dynamical morphisms give rise to maps between certain families of open systems. In both these works, open systems are described using \emph{objects}. This differs from the framework discussed in the current paper, in which open systems are described as \emph{morphisms}, while objects describe inputs and outputs of those systems \cite{BP}.

If $\bold{C}$ is a category with finite colimits, chosen pushouts and binary coproducts and $F \colon (\bold{C},+) \to (\bold{D},\otimes)$ is a symmetric lax monoidal functor, then we can decorate the apex of a cospan in $\bold{C}$, which is an object $N \in \bold{C}$, with an element of $F(N) \in \bold{D}$ given by a morphism $f \colon I \to F(N)$ where $I$ is the unit object for the tensor product in $\bold{D}$. As a specific example, let $\bold{C} = \bold{FinSet}$ and $\bold{D} = \bold{Set}$ and let $F \colon\bold{Finset} \to \bold{Set}$ be the functor that assigns to each finite set $N$ the set $F(N)$ of all ways of assigning `weights' given by positive real numbers to edges of a graph whose vertex set is $N$. To see what this would look like, let $N$ be an arbitrary 3 element set. Then one possible assignment of weighted edges to $N$ would be:

\begin{center}
  \begin{tikzpicture}[auto,scale=2.3]
    \node[circle,draw,inner sep=1pt,fill]         (A) at (0,0) {};
    \node[circle,draw,inner sep=1pt,fill]         (B) at (1,0) {};
    \node[circle,draw,inner sep=1pt,fill]         (C) at (0.5,-.86) {};
    \path (B) edge  [bend right,->-] node[above] {0.2} (A);
    \path (A) edge  [bend right,->-] node[below] {1.3} (B);
    \path (A) edge  [->-] node[left] {0.8} (C);
    \path (C) edge   [->-] node[right] {2.0} (B);
  \end{tikzpicture}
\end{center}
In applications to electrical circuits, one could use a weighted graph of this type to represent an electrical circuit made of resistors where the weights are resistances. Here, our weights are elements of the set $L=(0,\infty)$. The above diagram is an instance of a `weighted graph', and is one possible example of an element of $F(N)$. 

From this graph, we can select subsets of nodes $X$ and $Y$ to be the inputs and outputs, respectively, which then yield maps $X \to N$ and $Y \to N$. These together with the specified element of $F(N)$ above gives us a `decorated cospan' which is a cospan $X \to N \leftarrow Y$ in $\bold{C}$ together with a map $s \colon 1 \to F(N)$, where the map $s$ specifies the decoration on the finite set $N$ by selecting an element out of $F(N)$. In our example of weighted graphs, a decorated cospan might look like:

\begin{center}
  \begin{tikzpicture}[auto,scale=2.15]
    \node[circle,draw,inner sep=1pt,fill=gray,color=gray]         (x) at (-1.4,-.43) {};
    \node at (-1.4,-.9) {$X$};
    \node[circle,draw,inner sep=1pt,fill]         (A) at (0,0) {};
    \node[circle,draw,inner sep=1pt,fill]         (B) at (1,0) {};
    \node[circle,draw,inner sep=1pt,fill]         (C) at (0.5,-.86) {};
    \node[circle,draw,inner sep=1pt,fill=gray,color=gray]         (y1) at (2.4,-.43) {};
    \node at (2.4,-.9) {$Y$};
    \path (B) edge  [bend right,->-] node[above] {0.2} (A);
    \path (A) edge  [bend right,->-] node[below] {1.3} (B);
    \path (A) edge  [->-] node[left] {0.8} (C);
    \path (C) edge  [->-] node[right] {2.0} (B);
    \path[color=gray, very thick, shorten >=10pt, shorten <=5pt, ->, >=stealth] (x) edge (A);
    \path[color=gray, very thick, shorten >=10pt, shorten <=5pt, ->, >=stealth] (y1) edge (B);
    \node at (0.5,-1.2) {$s \colon 1 \to F(N)$};
  \end{tikzpicture}
\end{center}

In the application to electrical circuits, the maps $X \to N$ and $Y \to N$ specify the inputs and outputs of the circuit, respectively, and are not required to be injective maps. In Section \ref{Section 5}, we present an example where not all of the maps are injections. We can then compose these electrical circuits by identifying the inputs of one with the outputs of another. We refer the curious reader to Fong \cite{Fon}.

\section{Overview}

Throughout this paper, whenever we say $\bold{C}$ is a category with finite colimits, we mean a category $\bold{C}$ with finite colimits and with chosen pushouts and coproducts for every pair of objects in $\bold{C}$. Fong's main result on decorated cospans \cite{Fon} is the following theorem:

\begin{thm}
Let $(\bold{C},+)$ be a category with finite colimits and let $(\bold{D}, \otimes)$ be a symmetric monoidal category. Let $F \colon \bold{C} \to \bold{D}$ be a symmetric lax monoidal functor. Then FCospan($\bold{C}$) is a symmetric monoidal category, where FCospan($\bold{C}$) is the category whose objects are that of $\bold{C}$ and whose morphisms are given by isomorphism classes of F-decorated cospans, where an F-decorated cospan is a pair

\[
    \left(
    \begin{aligned}
      \xymatrix{
	& N \\  
	X \ar[ur]^{i} && Y \ar[ul]_{o}
      }
    \end{aligned}
    ,
    \qquad
    \begin{aligned}
      \xymatrix{
	F(N) \\
	I \ar[u]_{s}
      }
    \end{aligned}
    \right)
\]

\noindent  and the composite of this $F$-decorated cospan with 
\[
\left(
\begin{aligned}
\xymatrix{
& N^\prime \\
 Y \ar[ur]^{i^\prime} && Z \ar[ul]_{o^\prime}
}
\end{aligned}
,
\qquad
\begin{aligned}
\xymatrix{
F(N^\prime) \\
I \ar[u]_{s^\prime}
}
\end{aligned}
\right)
\]
is given by
\[
\left(
\begin{aligned}
\xymatrix{
& N +_Y N^\prime \\
 X \ar[ur]^{J_{N} \circ i} && Z \ar[ul]_{J_{N^\prime} \circ o^\prime}
}
\end{aligned}
,
\qquad
\begin{aligned}
\xymatrix{
F(N+_Y N^\prime) \\
I \ar[u]_{s^{\prime \prime}}
}
\end{aligned}
\right)
\] 
where $s^{\prime \prime}$ is the composite
\[I \xrightarrow{\lambda^{-1}} I \otimes I \xrightarrow{s \otimes s^\prime} F(N) \otimes F(N^\prime) \xrightarrow{\phi_{N,N^\prime}} F(N + N^\prime) \xrightarrow{F(J_{N,N^\prime})} F(N+_{Y}N^\prime).
\]
Here, $\phi_{N,N^\prime}$ is the natural transformation of the lax monoidal functor $F$, $J_{N,N^\prime} \colon N + N^\prime \to N +_Y N^\prime$ is the natural morphism from the coproduct to the pushout, and the maps $J_{N}$ and $J_{N^\prime}$ are the map $J_{N,N^\prime}$ restricted to $N$ and $N^\prime$, respectively.
\end{thm}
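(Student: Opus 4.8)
The plan is to verify the category axioms and then the symmetric monoidal axioms by hand, after first streamlining the combinatorics of the decoration. The key observation is that $\Hom_{\bold{D}}(I,-)\colon\bold{D}\to\bold{Set}$ is itself symmetric lax monoidal, its comparison map $\Hom_{\bold{D}}(I,A)\times\Hom_{\bold{D}}(I,B)\to\Hom_{\bold{D}}(I,A\otimes B)$ sending $(a,b)$ to $I\xrightarrow{\lambda^{-1}}I\otimes I\xrightarrow{a\otimes b}A\otimes B$; hence $G:=\Hom_{\bold{D}}(I,-)\circ F\colon\bold{C}\to\bold{Set}$ is again symmetric lax monoidal, with comparison maps $\psi_{M,N}\colon G(M)\times G(N)\to G(M+N)$ and unit element $\psi_0\in G(\emptyset)$. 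Under this reduction an $F$-decorated cospan with apex $N$ is exactly a cospan $X\to N\leftarrow Y$ together with an element $s\in G(N)$, and the composite decoration $s''$ of the statement is precisely $G(J_{N,N'})\bigl(\psi_{N,N'}(s,s')\bigr)$. This removes $\lambda^{-1}$ and $s\otimes s'$ from all bookkeeping, so the whole argument runs on functoriality of $G$ together with the associativity, unit and symmetry coherences for $\psi$.

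First I would show that composition descends to isomorphism classes: an isomorphism of $F$-decorated cospans $(X\to N\leftarrow Y,s)\to(X\to M\leftarrow Y,t)$ is an isomorphism of cospans $n\colon N\to M$ with $G(n)(s)=t$, and given such isomorphisms of the two composable factors the induced map of pushout apices is an isomorphism of cospans by functoriality of the chosen pushout, with compatibility of decorations following by applying $G$ to the square relating $J_{N,N'}$ and $J_{M,M'}$ and invoking naturality of $\psi$. Then associativity: on underlying cospans this is the classical fact (B\'enabou \cite{Be}) that $(N+_Y N')+_Z N''\cong N+_Y(N'+_Z N'')$ as cospans, which is exactly why the theorem is stated with isomorphism classes, while the equality of the two bracketings of the decoration follows from the associativity coherence for $\psi$, naturality of $\psi$, and $G$ applied to the evident commuting diagram in $\bold{C}$ among $N+N'+N''$, the iterated pushouts, and the comparison maps $J$. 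For the unit laws, the identity on $X$ is $X\xrightarrow{1}X\xleftarrow{1}X$ decorated by $G(!_X)(\psi_0)\in G(X)$ with $!_X\colon\emptyset\to X$ (the ``empty decoration''); the cospan side is the standard identity law $X+_X N\cong N$ in $\bold{Cospan}(\bold{C})$, and the decoration side collapses to the unit coherence for $\psi$, naturality of $\psi$, and $G$ applied to $[i,1_N]\colon X+N\to N$. This establishes that FCospan$(\bold{C})$ is a category.

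For the symmetric monoidal structure, put $\otimes=+$ on objects and $(f,s)\otimes(f',s')=(f+f',\psi_{N,N'}(s,s'))$ on morphisms, where $f+f'$ denotes the coproduct cospan $X+X'\to N+N'\leftarrow Y+Y'$. Functoriality of $\otimes$ (interchange with composition) uses on the cospan side that coproducts commute with pushouts, $(N+_Y N')+(M+_W M')\cong(N+M)+_{Y+W}(N'+M')$, and on the decoration side the associativity and symmetry coherences for $\psi$ together with naturality. The associator, unitors and braiding of FCospan$(\bold{C})$ are obtained from $\alpha,\lambda,\rho,\beta$ of $(\bold{C},+)$ by sending an isomorphism $u\colon A\to B$ in $\bold{C}$ to the decorated cospan $(A\xrightarrow{u}B\xleftarrow{1}B,\;G(!_B)(\psi_0))$; their naturality, and the pentagon, triangle and hexagon identities in FCospan$(\bold{C})$, reduce to the corresponding statements in $(\bold{C},+)$ once one checks that the empty decoration is carried to the empty decoration along every coproduct-coherence map (naturality of $\psi$ and the unit axiom) and that $\psi$ is compatible with the symmetry of $+$ and of $\times$ — which is exactly the word ``symmetric'' in symmetric lax monoidal.

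I expect the main obstacle to be the bookkeeping forced by the comparison maps $J_{N,N'}\colon N+N'\to N+_Y N'$: a decoration lives on the apex, yet composition first forms a coproduct and only then projects along $J$ to the pushout, so every coherence diagram has to be re-examined at the level of decorations, and in each case this amounts to (i) exhibiting the correct commuting diagram in $\bold{C}$ among coproducts, pushouts and comparison maps, (ii) applying $G$, and (iii) contracting the result with naturality of $\psi$ and the lax monoidal coherence axioms. No individual instance is deep, but making the pattern uniform — and checking that the discrepancy between ``the chosen pushout'' and ``a pushout'' is always absorbed by passing to isomorphism classes — is the real substance of the proof; the reduction to $\bold{D}=\bold{Set}$ in the first paragraph is what keeps this manageable.
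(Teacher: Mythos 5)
Your outline is correct and is essentially the standard argument; note, however, that the paper does not prove this theorem at all but quotes it as Fong's result from \cite{Fon}, so there is no in-paper proof to compare against. Your organizing device --- replacing $F$ by $\Hom_{\bold{D}}(I,-)\circ F$ so that decorations become elements of a $\bold{Set}$-valued symmetric lax monoidal functor --- is precisely the reduction via the global sections functor that the paper itself points out immediately after stating the theorem, and the remaining verifications (descent to isomorphism classes, associativity and unit laws via the pushout coherences together with the coherences for $\psi$, and the monoidal structure induced by $+$) proceed as you describe.
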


Fong's result is actually slightly more general than this in that he only requires $(\bold{D}, \otimes)$ to be braided monoidal. But actually, without loss of generality, we may assume that $(\bold{D}, \otimes) = (\bold{Set}, \times)$ due to the existence of the \textbf{global sections functor}, which is the braided lax monoidal functor $G \colon \bold{D} \to \bold{Set}$ defined by $d \mapsto \textnormal{hom}(I,d)$, where $I$ is the unit object for the tensor product of $(\bold{D}, \otimes)$.

In this paper, we will not take isomorphism classes, and thus Fong's result can be viewed as a decategorification of the result in this paper. Our goal is to prove the following theorem:

\begin{thm}\label{Theorem 2.2}
Let $(\bold{C},+)$ be a category with finite colimits and let $(\bold{D}, \otimes)$ be a symmetric monoidal category. Let $F \colon \bold{C} \to \bold{D}$ be a symmetric lax monoidal functor. Then $\text{FCospan}(\bold{C})$ is a symmetric monoidal bicategory, where $\text{FCospan}(\bold{C})$ is the category whose objects are that of $\bold{C}$, whose morphisms are given by $F$-decorated cospans and whose 2-morphisms are given by globular maps of $F$-decorated cospans, where a globular map of $F$-decorated cospans is a pair of commuting diagrams of the following form.

 \[
    \left(
    \begin{aligned}
      \xymatrix{
	& N \ar[dd]^{h} \\  
	X \ar[ur]^{i} \ar[dr]_{i^\prime} && Y \ar[ul]_{o} \ar[dl]^{o^\prime}\\
          & N'
      }
    \end{aligned}
    ,
    \qquad
    \begin{aligned}
      \xymatrix{
	& F(N) \ar[dd]^{F(h)} \\
	I \ar[ur]^{s_{1}} \ar[dr]_{s_{2}} \\
           & F(N^\prime)
      }
    \end{aligned}
    \right)
  \]

\end{thm}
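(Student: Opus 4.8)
The plan is to produce the bicategory not directly but as the horizontal bicategory of a symmetric monoidal pseudo double category, and then invoke Shulman's theorem \cite{Shul}: the horizontal bicategory of an isofibrant symmetric monoidal pseudo double category is a symmetric monoidal bicategory. So the work splits into (a) building a pseudo double category of $F$-decorated cospans, (b) equipping it with a symmetric monoidal structure, (c) checking isofibrancy, and (d) identifying the resulting horizontal bicategory with $\text{FCospan}(\bold{C})$.

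First I would define a pseudo double category $\mathbb{F}\mathbf{Cospan}(\bold{C})$: its objects are those of $\bold{C}$; its vertical $1$-morphisms are morphisms of $\bold{C}$ with composition as in $\bold{C}$; its horizontal $1$-morphisms $X \to Y$ are $F$-decorated cospans $(X \xrightarrow{i} N \xleftarrow{o} Y,\; s\colon I \to F(N))$; and a square with horizontal boundaries the decorated cospans with apices $N,N'$ and vertical boundaries $f\colon X\to X'$, $g\colon Y\to Y'$ is a morphism $h\colon N\to N'$ in $\bold{C}$ making the two evident triangles commute and satisfying $F(h)\circ s = s'$. Horizontal composition of decorated cospans is the chosen pushout of apices together with the composite decoration exactly as in Fong's theorem above; horizontal composition of squares is induced by the universal property of the pushout, and the interchange law follows from the same universal property. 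The associator and left/right unitor for horizontal composition are assembled from the canonical comparison isomorphisms between iterated chosen pushouts in $\bold{C}$ together with the associativity and unit coherences of the lax monoidal structure $(\phi,\epsilon)$ of $F$; the pentagon and triangle identities then reduce to the corresponding coherences for pushouts in $\bold{C}$ and for $F$. (Here the identity horizontal $1$-morphism on $X$ is $X\xrightarrow{1}X\xleftarrow{1}X$ with decoration $I\xrightarrow{\epsilon}F(0)\xrightarrow{F(!)}F(X)$, and one sees at once why the double category is genuinely pseudo rather than strict.)

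Next I would put a symmetric monoidal structure on $\mathbb{F}\mathbf{Cospan}(\bold{C})$ using the coproduct $+$ on $\bold{C}$ and the symmetric lax structure of $F$: on objects and vertical morphisms it is $+$; on horizontal morphisms it sends a pair of decorated cospans to $X+X' \to N+N' \leftarrow Y+Y'$ with decoration $I\xrightarrow{\lambda^{-1}} I\otimes I \xrightarrow{s\otimes s'} F(N)\otimes F(N') \xrightarrow{\phi_{N,N'}} F(N+N')$; on squares it is the evident induced map; the monoidal unit is the initial object $0$ decorated by $\epsilon\colon I\to F(0)$; and the associativity, unit and braiding constraints are built from those of $(\bold{C},+)$ and $(\bold{D},\otimes)$ together with the symmetry and coherence axioms of $F$. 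The task is to verify that all of this data constitutes a symmetric pseudomonoid in the $2$-category $\cDbl$ of pseudo double categories, i.e.\ that the globular and cylinder coherence conditions of \cite{Shul} hold; each such condition unwinds either to a diagram in $\bold{C}$ provable from the universal property of pushouts and coproducts, or to a diagram in $\bold{D}$ provable from the lax symmetric monoidal axioms for $F$ together with naturality of $\phi$. Finally, for isofibrancy, given a vertical isomorphism $f\colon X\to Y$ its companion is the decorated cospan $X\xrightarrow{f}Y\xleftarrow{1_Y}Y$ with decoration $I\xrightarrow{\epsilon}F(0)\xrightarrow{F(!)}F(Y)$ and its conjoint is $X\xrightarrow{1_X}X\xleftarrow{f^{-1}}Y$ similarly decorated, with the binding squares the obvious ones; isofibrancy is immediate. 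Shulman's theorem then yields that $\mathcal{H}(\mathbb{F}\mathbf{Cospan}(\bold{C}))$ is a symmetric monoidal bicategory, and its globular $2$-cells — squares with identity vertical boundaries — are exactly the pairs of commuting diagrams displayed in the statement, so $\mathcal{H}(\mathbb{F}\mathbf{Cospan}(\bold{C})) = \text{FCospan}(\bold{C})$.

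The main obstacle is step (b): assembling the symmetric monoidal structure on the double category and checking the symmetric-pseudomonoid axioms is a lengthy coherence verification. The delicate point is the interplay of the two sources of weakness — the chosen pushouts used for horizontal composition and the fact that $F$ is only lax (not strong) monoidal — so that the naturality squares of $\phi$ and its compatibility with the comparison maps $F(J_{N,N'})$ must be threaded through every constraint cell. The strategy for keeping this tractable is to isolate each coherence condition and reduce it to a single universal-property argument in $\bold{C}$ or a single instance of an $F$-coherence axiom; once the symmetric monoidal pseudo double category is in hand, the passage to a bicategory is a black-box application of \cite{Shul}.
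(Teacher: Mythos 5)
Your proposal is correct and reaches the theorem by the same overall strategy as the paper --- build a fibrant symmetric monoidal pseudo double category whose horizontal $1$-cells are $F$-decorated cospans and then apply Shulman's theorem --- but it constructs that double category by a genuinely different route. You define the decorated-cospan double category directly and verify its pseudo double category axioms, monoidal structure, and symmetry by hand. The paper instead factors the construction: it first establishes $\text{Cospan}(\bold{C})$ and the delooping $\lB\lD$ of $\bold{D}$ as symmetric monoidal pseudo double categories, upgrades $F$ to a symmetric lax monoidal double functor $F'\colon \text{Cospan}(\bold{C})\to\lB\lD$, and then realizes the decorated-cospan double category as the pseudo comma double category $(E/F')$, where $E\colon\bold{1}\to\lB\lD$ is the (strict, hence oplax) functor picking out the unit. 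The comma construction buys two things: the symmetric monoidal structure on decorated cospans is inherited from structures already verified on $\text{Cospan}(\bold{C})$ and $\lB\lD$ rather than checked from scratch, and the existence of the comma object can be outsourced to general $2$-monad machinery (the paper cites Lack's result on comma objects of a lax and a strict morphism in $T\mathbf{Alg}_\ell$); it also makes transparent why the lax direction of $F'$ and the oplax direction of $E$ are exactly what is needed for composing $2$-morphisms. Your direct approach is more elementary and avoids double functors and comma objects entirely, at the cost of concentrating all the coherence verification in step (b), which you correctly identify as the bulk of the work; the reductions you describe (universal properties of pushouts/coproducts in $\bold{C}$, lax symmetric monoidal axioms and naturality of $\phi$ in $\bold{D}$) are the same ones the paper uses inside Proposition \ref{Proposition 4.5} and Theorem \ref{Theorem 4.8}. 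One small remark: your conjoint $X\xrightarrow{1_X}X\xleftarrow{f^{-1}}Y$ needs $f$ invertible, so you only get isofibrancy; this suffices for the version of Shulman's theorem you quote, but the cheaper choice $Y\xrightarrow{1_Y}Y\xleftarrow{f}X$ (with trivial decoration), as in the paper, gives a conjoint for \emph{every} vertical $1$-morphism and hence full fibrancy for free.
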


\begin{proof}
Let $\bold{C}$ be a category with finite colimits and let $\bold{D}$ be a symmetric monoidal category. Note that $\bold{C}$ then becomes symmetric monoidal with the coproduct as the tensor product and the initial object, which we denote as $0$, for the unit. Let $F \colon (\bold{C},+) \to (\bold{D},\otimes)$ be a symmetric lax monoidal functor. By Proposition \ref{Proposition 4.1}, Theorem \ref{Proposition 4.2} and the definition of `fibrant', we have that Cospan($\bold{C}$) is a fibrant symmetric monoidal pseudo double category. Denote by $\bold{BD}$ the delooping of the symmetric monoidal category $\bold{D}$ into a one-object bicategory. The one object bicategory $\bold{BD}$ can then be viewed as a symmetric monoidal pseudo double category as proven in Proposition \ref{Proposition 4.3} and Proposition \ref{Proposition 4.4}. In Proposition \ref{Proposition 4.5}, we construct a symmetric lax monoidal double functor $F^\prime \colon\text{Cospan}(\bold{C}) \to \bold{BD}$ such that $F^\prime$ acts as $F$ on vertical 1-morphisms and horizontal 1-cells, which are morphisms and cospans in $\bold{C}$, respectively. Viewing the trivial category $\bold{1}$ as a symmetric monoidal pseudo double category, define a symmetric oplax monoidal double functor $E \colon\bold{1} \to \bold{BD}$ where $E$ picks out the unit object of $\bold{BD}$. We then construct in Theorem \ref{Theorem 4.8} the pseudo comma double category $(E/F^\prime)$, and show that this is the symmetric monoidal pseudo double category of $F$-decorated cospans in $\bold{C}$.  We show this symmetric monoidal pseudo double category is fibrant in Proposition \ref{Proposition 4.9}, and applying the following result of Shulman \cite{Shul} yields $F$Cospan$(\bold{C})$ as the `horizontal bicategory' of $(E/F^\prime)$, denoted as $H(E/F^\prime)$. This completes the proof of the theorem.
\end{proof}

\begin{thm}[Shulman]
Let $\lD$ be a fibrant symmetric monoidal pseudo double category. Then $H(\bold{\lD})$ is a symmetric monoidal bicategory, where $H(\bold{\lD})$ is the horizontal bicategory of $\lD$.
\end{thm}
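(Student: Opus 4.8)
The plan is to follow Shulman's argument from \cite{Shul} rather than attempt a direct verification. Exhibiting a tensor pseudofunctor on $H(\lD)$ together with its associator, unitors, braiding and syllepsis and then checking by hand the full list of symmetric-monoidal-bicategory coherence axioms (the pentagonator, the two hexagonators, the syllepsis and symmetry axioms, and the remaining identities) is essentially intractable. The observation that makes this manageable is that the monoidal structure carried by a \emph{double} category is vertically strict and can be encoded purely 2-categorically. So first I would fix the ambient setting: the strict 2-category $\cDbl$ of pseudo double categories, pseudo double functors and vertical transformations has finite products, computed levelwise, and a symmetric monoidal double category is by definition a symmetric pseudomonoid in $\cDbl$. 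The key point is that ``symmetric pseudomonoid in a 2-category with finite products'' is a completely explicit, finite notion whose structure and axioms are 2-categorical; no higher coherence polytopes intervene.

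Second, I would analyse the horizontal bicategory construction as a functor. For any pseudo double category, $H(\lD)$ has the objects of $\lD$, its horizontal 1-cells as morphisms and its globular squares as 2-cells, and is a bicategory directly from the pseudo-double-category axioms. Restricting to the full sub-2-category $\cDbl_f \subseteq \cDbl$ on the fibrant double categories, the goal is to promote $H$ to a functor $H \colon \cDbl_f \to \cBicat$ into the tricategory of bicategories, homomorphisms, pseudonatural transformations and modifications, and to verify that $H$ preserves finite products: $H(\lD \times \lE) \simeq H(\lD) \times H(\lE)$ and $H(\mathbf{1}) \simeq \mathbf{1}$, both because $H$ is levelwise. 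Fibrancy is precisely what does the work here: a pseudo double functor preserves companions and conjoints up to canonical isomorphism and so induces a homomorphism of horizontal bicategories; a vertical transformation $\al \colon F \Rightarrow G$ has components that are \emph{vertical} arrows, and one replaces each by the horizontal equivalence $(\al_A)_!$ given by its companion to obtain a pseudonatural transformation $H(F) \Rightarrow H(G)$; modifications are handled the same way, through the calculus of companions and conjoints, by which globular squares of $H(\lD)$ correspond to suitable squares of $\lD$. Without fibrancy these horizontal 1-cells need not exist and $H$ cannot be made functorial on the morphisms we need.

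Third, the conclusion is then formal. A finite-product-preserving functor into $\cBicat$ carries a symmetric pseudomonoid object to a symmetric pseudomonoid object, so $H$ sends the fibrant symmetric monoidal double category $\lD$ to a symmetric pseudomonoid in $\cBicat$; it remains to invoke (or establish once) the identification of pseudomonoids, braided pseudomonoids and symmetric pseudomonoids in $\cBicat$ with monoidal, braided monoidal and symmetric monoidal bicategories respectively. Unwinding the transport recovers exactly the expected structure on $H(\lD)$: its tensor is $H$ of the tensor double functor; its associativity equivalence, unitors, braiding and syllepsis are the companions of the corresponding vertical transformations of $\lD$; and its invertible coherence 2-cells come from $H$ applied to the invertible modifications witnessing the pseudomonoid axioms in $\cDbl$.

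The main obstacle is the second step: making precise, and checking the coherence of, the functor $H \colon \cDbl_f \to \cBicat$ together with its product-preservation. This needs (i) the companion/conjoint calculus --- existence and (pseudo)functoriality of $f \mapsto f_!$, the mates correspondence between globular squares and squares with prescribed vertical boundary, and uniqueness up to canonical isomorphism; (ii) a finite but genuinely lengthy verification that the comparison isomorphisms $H(FG) \cong H(F)\,H(G)$, the unit comparisons, and the data making each $H(\al)$ pseudonatural satisfy all the required equations; and (iii) care that the tricategorical target introduces no spurious obstruction --- in particular a once-and-for-all comparison showing that a ``symmetric pseudomonoid in $\cBicat$'' (in which the syllepsis is forced into existence, since $\cBicat$ is a tricategory whereas $\cDbl$ is only a 2-category) is exactly a symmetric monoidal bicategory in the usual sense. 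Once $H$ is established as a product-preserving functor, the statement of the theorem is immediate.
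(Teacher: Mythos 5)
This theorem is not proved in the paper at all: it is imported verbatim from Shulman \cite{Shul} and used as a black box, so there is no in-paper argument to compare against. Your outline is, in substance, a faithful reconstruction of Shulman's own proof strategy: symmetric monoidal double categories are exactly symmetric pseudomonoids in the cartesian $2$-category $\cDbl$, the horizontal-bicategory construction is made functorial and product-preserving on the fibrant objects via the companion/conjoint calculus (fibrancy being what turns vertical transformations into pseudonatural ones), and the pseudomonoid structure is then transported. You have also correctly located where the real work lies --- step (ii), the functoriality and comparison constraints of $H$, and step (iii), the mismatch between ``symmetric pseudomonoid in the tricategory $\cBicat$'' and ``symmetric monoidal bicategory.'' The one point worth flagging is that Shulman does not in the end run the purely formal ``product-preserving functors preserve pseudomonoids'' argument in the tricategorical target, precisely because of the issue you raise in (iii); instead he builds the associator, unitors, braiding, syllepsis and their coherence cells explicitly out of companions of the corresponding vertical transformations and verifies the axioms using the mates correspondence. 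So your plan is correct as a roadmap, but the ``formal'' third step is where the bulk of the actual verification in \cite{Shul} lives rather than something that comes for free.
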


This then gives a symmetric monoidal bicategory whose objects are objects of $\bold{C}$, whose morphisms are decorated cospans, and whose 2-morphisms are pairs of commuting diagrams as above. In what follows we denote a double category as $\lD$, using font as such, and regular categories as well as bicategories as $\bold{D}$.

\section{Definitions and background}

Pseudo double categories, also known as weak double categories, have been studied by Fiore \cite{Fiore} and Pare and Grandis \cite{Gran}. Before formally defining them, it is helpful to have the following picture in mind. A pseudo double category has 2-morphisms shaped like:

\begin{equation}\label{eq:square}
  \xymatrix@-.5pc{
    A \ar[r]|{|}^{M}  \ar[d]_f \ar@{}[dr]|{\Downarrow a}&
    B\ar[d]^g\\
    C \ar[r]|{|}_N & D
  }
\end{equation}

We call $A, B, C$ and $D$ \textbf{objects} or \textbf{0-cells}, $f$ and $g$ \textbf{vertical 1-morphisms}, $M$ and $N$ \textbf{horizontal 1-cells} and $a$ a \textbf{2-morphism}. Note that a vertical 1-morphism is a morphism between 0-cells and a 2-morphism is a morphism between horizontal 1-cells. We will denote both kinds of morphisms and horizontal 1-cells as a single arrow, namely `$\to$', unless in a diagram, in which case they will be denoted as above.

We follow the notation of Shulman \cite{Shul} with the following definitions.

\begin{defn}
A \textbf{pseudo double category} $\lD$, or $\textbf{double category}$ for short, consists of a category of objects $\bold{D_{0}}$ and a category of arrows $\bold{D_{1}}$ with the following functors
\begin{center}
$U\colon \bold{D_{0}} \to \bold{D_{1}}$\\
$S,T \colon \bold{D_{1}} \rightrightarrows \bold{D_{0}}$\\
$\odot \colon \bold{D_{1}} \times_{\bold{D_{0}}} \bold{D_{1}} \to \bold{D_{1}}$ (where the pullback is taken over $\bold{D_{1}} \xrightarrow[]{T} \bold{D_{0}} \xleftarrow[]{S} \bold{D_{1}}$) \\
\end{center}
 such that \\
\begin{center}
$S(U_{A})=A=T(U_{A})$\\
$S(M \odot N)=SN$\\
$T(M \odot N)=TM$\\
\end{center}
equipped with natural isomorphisms
\begin{center}

$\alpha \colon (M \odot N) \odot P \xrightarrow{\sim} M \odot (N \odot P)$\\
$\lambda \colon U_{B} \odot M \xrightarrow{\sim} M$\\
$\rho \colon M \odot U_{A} \xrightarrow{\sim} M$

\end{center}
such that $S(\alpha), S(\lambda), S(\rho), T(\alpha), T(\lambda)$ and $T(\rho)$ are all identities and that the coherence axioms of a monoidal category are satisfied. Following the notation of Shulman, objects of $\bold{D_{0}}$ are called $\textbf{0-cells}$ and morphisms of $\bold{D_{0}}$ are called $\textbf{vertical 1-morphisms}$. Objects of $\bold{D_{1}}$ are called $\textbf{horizontal 1-cells}$ and morphisms of $\bold{D_{1}}$ are called $\textbf{2-morphisms}$. The morphisms of $\bold{D_{0}}$, which are vertical 1-morphisms, will be denoted $f \colon A \to C$ and we denote a 1-cell $M$ with $S(M)=A,T(M)=B$ by $M \colon A \hto B$. Then a 2-morphism $a \colon M \to N$ of $\bold{D_{1}}$ with $S(a)=f,T(a)=g$ would look like:
\begin{equation}\label{eq:square}
  \xymatrix@-.5pc{
    A \ar[r]|{|}^{M}  \ar[d]_f \ar@{}[dr]|{\Downarrow a}&
    B\ar[d]^g\\
    C \ar[r]|{|}_N & D
  }
\end{equation}
\end{defn}

The key difference between a `strict' double category and a pseudo double category is that in a pseudo double category, horizontal composition is associative and unital only up to natural isomorphism. Equivalently, as a double category can be viewed as a category internal to $\bold{Cat}$, we can view a pseudo double category as a category `weakly' internal to $\bold{Cat}$. We will sometimes omit the word pseudo and simply say double category.

\begin{defn}
A 2-morphism where $f$ and $g$ are identities is called a \textbf{globular 2-morphism}.
\end{defn}

\begin{defn}
Let $\lD$ be a pseudo double category. Then the $\textbf{horizontal bicategory}$ of $\lD$, which we denote as $H(\lD)$, is the bicategory consisting of objects of $\lD$, 1-morphisms that are horizontal 1-cells of $\lD$ and 2-morphisms that are globular 2-morphisms.
\end{defn}

\begin{defn}
  A \textbf{monoidal double category} is a double category equipped the following
structure.
\begin{enumerate}
\item $\bold{D_{0}}$ and $\bold{D_{1}}$ are both monoidal categories.
\item If $I$ is the monoidal unit of $\bold{D_{0}}$, then $U_I$ is the
  monoidal unit of $\bold{D_{1}}$.
\item The functors $S$ and $T$ are strict monoidal, i.e.\ $S(M\ten N)
  = SM\ten SN$ and $T(M\ten N)=TM\ten TN$ and $S$ and $T$ also
  preserve the associativity and unit constraints.
\item We have globular isomorphisms
  \[\fx\maps (M_1\ten N_1)\odot (M_2\ten N_2)\too[\sim] (M_1\odot M_2)\ten (N_1\odot N_2)\]
  and
  \[\fu\maps U_{A\ten B} \too[\sim] (U_A \ten U_B)\]
  such that the following diagrams commute:
  \[\xymatrix{
    ((M_1\ten N_1)\odot (M_2\ten N_2)) \odot (M_3\ten N_3) \ar[r]^{\fx \odot 1} \ar[d]_{\alpha}
    & ((M_1\odot M_2)\ten (N_1\odot N_2)) \odot (M_3\ten N_3) \ar[d]^{\fx}\\
    (M_1\ten N_1)\odot ((M_2\ten N_2) \odot (M_3\ten N_3)) \ar[d]_{1 \odot \fx} &
    ((M_1\odot M_2)\odot M_3) \ten ((N_1\odot N_2)\odot N_3) \ar[d]^{\alpha \otimes \alpha}\\
    (M_1\ten N_1) \odot ((M_2\odot M_3) \ten (N_2\odot N_3))\ar[r]^{\fx} &
    (M_1\odot (M_2\odot M_3)) \ten (N_1\odot (N_2\odot N_3))}\]
  \[\xymatrix{(M\ten N) \odot U_{C\ten D} \ar[r]^{1 \odot \fu} \ar[d]_{\rho} &
    (M\ten N)\odot (U_C\ten U_D) \ar[d]^{\fx}\\
    M\ten N\ar@{<-}[r]^{\rho \otimes \rho} & (M\odot U_C) \ten (N\odot U_D)}\]
  \[\xymatrix{U_{A\ten B}\odot (M\ten N)  \ar[r]^{\fu \odot 1} \ar[d]_{\lambda} &
    (U_A\ten U_B)\odot (M\ten N) \ar[d]^{\fx}\\
    M\ten N\ar@{<-}[r]^{\lambda \otimes \lambda} & (U_A \odot M) \ten (U_B\odot N)}\]
\item The following diagrams commute, expressing that the
  associativity isomorphism for $\ten$ is a transformation of double
  categories.
  \[\xymatrix{
    ((M_1\ten N_1)\ten P_1) \odot ((M_2\ten N_2)\ten P_2) \ar[r]^{a \odot a}\ar[d]_{\fx} &
    (M_1\ten (N_1\ten P_1)) \odot (M_2\ten (N_2\ten P_2)) \ar[d]^{\fx}\\
    ((M_1\ten N_1) \odot (M_2\ten N_2)) \ten (P_1\odot P_2) \ar[d]_{\fx \otimes 1} &
    (M_1\odot M_2) \ten ((N_1\ten P_1)\odot (N_2\ten P_2))\ar[d]^{1 \otimes \fx} \\
    ((M_1\odot M_2) \ten(N_1\odot N_2)) \ten (P_1\odot P_2) \ar[r]^{a} &
    (M_1\odot M_2) \ten ((N_1\odot N_2)\ten (P_1\odot P_2))}\]
  \[\xymatrix{
    U_{(A\ten B)\ten C} \ar[r]^{U_{a}} \ar[d]_{\fu} & U_{A\ten (B\ten C)} \ar[d]^{\fu}\\
    U_{A\ten B} \ten U_C \ar[d]_{\fu \otimes 1} & U_A\ten U_{B\ten C}\ar[d]^{1 \otimes \fu}\\
    (U_A\ten U_B)\ten U_C \ar[r]^{a} & U_A\ten (U_B\ten U_C) }\]
\item The following diagrams commute, expressing that the unit
  isomorphisms for $\ten$ are transformations of double categories.
  \[\vcenter{\xymatrix{
      (M\ten U_I)\odot (N\ten U_I)\ar[r]^{\fx}\ar[d]_{r \odot r} &
      (M\odot N)\ten (U_I \odot U_I) \ar[d]^{1 \otimes \rho}\\
      M\odot N \ar@{<-}[r]^{r} &
      (M\odot N)\ten U_I }}\]
  \[\vcenter{\xymatrix{U_{A\ten I} \ar[r]^{\fu} \ar[dr]_{U_{r}} & U_A\ten U_I \ar[d]^{r}\\
       & U_A}}\]
  \[\vcenter{\xymatrix{
      (U_I\ten M)\odot (U_I\ten N)\ar[r]^{\fx} \ar[d]_{\ell \odot \ell} &
      (U_I \odot U_I) \ten (M\odot N) \ar[d]^{\lambda \otimes 1}\\
      M\odot N \ar@{<-}[r]^{\ell} &
      U_I\ten (M\odot N) }}\]
  \[\vcenter{\xymatrix{U_{I\ten A} \ar[r]^{\fu}\ar[dr]_{U_{\ell}} & U_I\ten U_A \ar[d]^{\ell}\\
      & U_A}}\]
  \newcounter{mondbl}
  \setcounter{mondbl}{\value{enumi}}
\end{enumerate}
Similarly, a braided monoidal double category is a monoidal double
category with the following additional structure.
\begin{enumerate}\setcounter{enumi}{\value{mondbl}}
\item $\bold{D_{0}}$ and $\bold{D_{1}}$ are braided monoidal categories.
\item The functors $S$ and $T$ are strict braided monoidal (i.e.\ they
  preserve the braidings).
\item The following diagrams commute, expressing that the braiding is
  a transformation of double categories.
  \[\xymatrix{(M_1\odot M_2)\ten (N_1\odot N_2) \ar[r]^\fs\ar[d]_\fx &
    (N_1\odot N_2)\ten (M_1 \odot M_2)\ar[d]^\fx\\
    (M_1\ten N_1)\odot (M_2\ten N_2) \ar[r]_{\fs\odot \fs} &
    (N_1\ten M_1) \odot (N_2 \ten M_2)}
  \]
  \[\xymatrix{U_A \ten U_B  \ar[d]_\fs &
    U_{A\ten B} \ar[l]_(0.40)\fu \ar[d]^{U_\fs}\\
    U_B\ten U_A &
    U_{B\ten A} \ar[l]_(0.40)\fu}
  \]
  \setcounter{mondbl}{\value{enumi}}
\end{enumerate}
Finally, a symmetric monoidal double category is a braided one such that
\begin{enumerate}\setcounter{enumi}{\value{mondbl}}
\item $\bold{D_{0}}$ and $\bold{D_{1}}$ are in fact symmetric monoidal.
\end{enumerate}
\end{defn}

\begin{defn}\label{def:companion}
  Let \lD\ be a double category and $f\maps A\to B$ a vertical
  1-morphism.  A \textbf{companion} of $f$ is a horizontal 1-cell
  $\fhat\maps A\hto B$ together with 2-morphisms
  \begin{equation*}
    \begin{array}{c}
      \xymatrix@-.5pc{
        \ar[r]|-@{|}^-{\fhat} \ar[d]_f \ar@{}[dr]|\Downarrow
        & \ar@{=}[d]\\
        \ar[r]|-@{|}_-{U_B} & }
    \end{array}\quad\text{and}\quad
    \begin{array}{c}
      \xymatrix@-.5pc{
        \ar[r]|-@{|}^-{U_A} \ar@{=}[d] \ar@{}[dr]|\Downarrow
        & \ar[d]^f\\
        \ar[r]|-@{|}_-{\fhat} & }
    \end{array}
  \end{equation*}
  such that the following equations hold.
  \begin{align}\label{eq:compeqn}
    \begin{array}{c}
      \xymatrix@-.5pc{
        \ar[r]|-@{|}^-{U_A} \ar@{=}[d] \ar@{}[dr]|\Downarrow
        & \ar[d]^f\\
        \ar[r]|-{\fhat} \ar[d]_f \ar@{}[dr]|\Downarrow
        & \ar@{=}[d]\\
        \ar[r]|-@{|}_-{U_B} & }
    \end{array} &= 
    \begin{array}{c}
      \xymatrix@-.5pc{ \ar[r]|-@{|}^-{U_A} \ar[d]_f
        \ar@{}[dr]|{\Downarrow U_f} &  \ar[d]^f\\
        \ar[r]|-@{|}_-{U_B} & }
    \end{array}
    &
    \begin{array}{c}
      \xymatrix@-.5pc{
        \ar[r]|-@{|}^-{U_A} \ar@{=}[d] \ar@{}[dr]|\Downarrow &
        \ar[r]|-@{|}^-{\fhat} \ar[d]_f \ar@{}[dr]|\Downarrow
        & \ar@{=}[d]\\
        \ar[r]|-@{|}_-{\fhat} &
        \ar[r]|-@{|}_-{U_B} &}
%       \xymatrix@-.5pc{
%         \ar[rr]|-@{|}^-{\fhat} \ar@{}[drr]|\sim \ar@{=}[d] &&
%         \ar@{=}[d] \\
%         \ar[r]|-@{|}^-{U_A} \ar@{=}[d] \ar@{}[dr]|\Downarrow &
%         \ar[r]|-@{|}^-{\fhat} \ar[d]_f \ar@{}[dr]|\Downarrow
%         & \ar@{=}[d]\\
%         \ar[r]|-@{|}_-{\fhat} &
%         \ar[r]|-@{|}_-{U_B} &\\
%         \ar[rr]|-@{|}_-{\fhat} \ar@{}[urr]|\sim \ar@{=}[u] &&
%         \ar@{=}[u]}
    \end{array} &=
    \begin{array}{c}
      \xymatrix@-.5pc{
        \ar[r]|-@{|}^-{\fhat} \ar@{=}[d] \ar@{}[dr]|{\Downarrow 1_{\fhat}}
        & \ar@{=}[d]\\
        \ar[r]|-@{|}_-{\fhat} & }
    \end{array}
  \end{align}
  A \textbf{conjoint} of $f$, denoted $\fchk\maps B\hto A$, is a
  companion of $f$ in the double category $\lD^{h\cdot\mathrm{op}}$
  obtained by reversing the horizontal 1-cells, but not the vertical
  1-morphisms, of \lD.
\end{defn}
\noindent
In a pseudo double category, the second equation above requires an insertion of unit isomorphisms to make sense due to horizontal composition only holding up to isomorphism.
\begin{defn}
  We say that a double category is \textbf{fibrant} if every vertical
  1-morphism has both a companion and a conjoint.
\end{defn}

\begin{defn}
Let $\lA$ and $\lB$ be pseudo double categories. A \textbf{lax double functor} is a functor $F \colon\lA \to \lB$ that takes items of $\lA$ to items of $\lB$ of the corresponding type, respecting vertical composition in the strict sense and the horizontal composition up to an assigned comparison $\phi$. This means that we have functors $F_0 \colon \lA_0 \to \lB_0$ and $F_1 \colon \lA_1 \to \lB_1$ such that the following equations are satisfied: $$S \circ F_1 = F_0 \circ S$$ $$T \circ F_1 = F_0 \circ T$$ For brevity, we will omit the subscripts and simply say $F$. As to whether we mean $F_0$ or $F_1$ will be clear from context.

Also, every object $A$ is equipped with a special globular 2-morphism $\phi_{A} \colon 1_{F(A)} \to F(1_{A})$ (the identity comparison), and every horizontal composition $N_{1} \odot N_{2}$ is equipped with a special globular 2-morphism $\phi(N_{1},N_{2}) \colon F(N_{1}) \odot F(N_{2}) \to F(N_{1} \odot N_{2})$ (the composition comparison), in a coherent way. This means that the following diagrams commute.

\begin{enumerate}

\item For a horizontal composite, $\beta \star \alpha$,

\begin{equation}\label{eq:square}
  \xymatrix@-.5pc{
    F(A) \ar[r]|{|}^{F(N_{2})}  \ar[d] \ar@{}[dr]|{F(\alpha)}&
    F(B) \ar[d] \ar[r]|{|}^{F(N_{1})} \ar@{}[dr]|{F(\beta)}&
    F(C) \ar[d] &
     &
    F(A) \ar[r]|{|}^{F(N_{2})} \ar@{}[drr]|{\phi(N_{1},N_{2})} \ar[d]_{1} &
    F(B) \ar[r]|{|}^{F(N_{1})} &
    F(C) \ar[d]^{1} \\
    F(A') \ar[r]|{|}_{F(N_{4})} \ar@{}[drr]|{\phi(N_{3},N_{4})} \ar[d]_{1}&
    F(B') \ar[r]|{|}_{F(N_{3})} &
    F(C') \ar[d]^{1}&
    = &
    F(A) \ar[rr]|{|}^{F(N_{1} \odot N_{2})} \ar[d] \ar@{}[drr]|{F(\beta \star \alpha)}&
     &
    F(C) \ar[d] \\
    F(A') \ar[rr]|{|}_{F(N_{3} \odot N_{4})} & 
     & 
    F(C') &
     &
    F(A') \ar[rr]|{|}_{F(N_{3} \odot N_{4})} &
     &
    F(C') \\
  }.
\end{equation}

\item For a horizontal 1-cell $N \colon A \to B$, the following diagrams are commutative (under horizontal composition).

\[
\begin{tikzpicture}[scale=1.5]
\node (A) at (1,1) {$F(N) \odot 1_{F(A)}$};
\node (C) at (3,1) {$F(N)$};
\node (A') at (1,-1) {$F(N) \odot F(1_{A})$};
\node (C') at (3,-1) {$F(N \odot 1_{A})$};
\node (B) at (5,1) {$1_{F(B)} \odot F(N)$};
\node (B') at (5,-1) {$F(1_{B}) \odot F(N)$};
\node (D) at (7,1) {$F(N)$};
\node (D') at (7,-1) {$F(1_{B} \odot N)$};
\path[->,font=\scriptsize,>=angle 90]
(A) edge node[left]{$1 \odot \phi_{A}$} (A')
(C') edge node[right]{$F \rho$} (C)
(A) edge node[above]{$\rho_{F(N)}$} (C)
(A') edge node[above]{$\phi(N,1_{A})$} (C')
(B) edge node[left]{$\phi_{B} \odot 1$} (B')
(B') edge node[above]{$\phi(1_{B},N)$} (D')
(B) edge node[above]{$\lambda_{F(N)}$} (D)
(D') edge node[right]{$F \lambda$} (D);
\end{tikzpicture}
\]

\item For consecutive horizontal 1-cells $N_{1},N_{2}$ and $N_{3}$, the following diagram is commutative.

 \[\xymatrix{
    (F(N_{1}) \odot F(N_{2})) \odot F(N_{3}) \ar[r]^{a'}\ar[d]_{\phi(N_{1},N_{2}) \odot 1}
    & F(N_{1}) \odot (F(N_{2}) \odot F(N_{3})) \ar[d]^{1 \odot \phi(N_{2},N_{3})}\\
    F(N_{1} \odot N_{2}) \odot F(N_{3}) \ar[d]_{\phi(N_{1} \odot N_{2},N_{3})} &
    F(N_{1}) \odot F(N_{2} \odot N_{3}) \ar[d]^{\phi(N_{1},N_{2} \odot N_{3})}\\
    F((N_{1} \odot N_{2}) \odot N_{3})\ar[r]^{Fa} &
    F(N_{1} \odot (N_{2} \odot N_{3}))}\]
\end{enumerate}
\end{defn}

\begin{defn}
Let $\lA$ and $\lB$ be pseudo double categories and $\hat{F} \colon \lA \to \lB$ a lax double functor. An \textbf{oplax double functor} is a functor $F \colon \lA \to \lB^{co}$ such that $F$ and $\hat{F}$ agree on all objects, vertical 1-morphisms and horizontal 1-cells and where $\lB^{co}$ denotes the pseudo double category $\lB$ with all 2-morphisms reversed. In other words, we reverse the direction of the assigned comparison $\phi$ for horizontal composition in the definition of lax double functor.
\end{defn}

%\begin{defn}
%A \textbf{pseudo double functor} is a lax double functor $F \colon \lA \to \lB$ such that the identity comparison $\phi_{A} \colon %1_{F(A)} \to F(1_{A})$ and the composition comparison $\phi(N_{1},N_{2}) \colon F(N_{1}) \odot F(N_{2}) \to F(N_{1} \odot %N_{2})$ are invertible.
%\end{defn}

\begin{defn}
A lax double functor $F \colon \lC \to \lD$ between monoidal pseudo double categories is $\textbf{(lax) monoidal}$ if it is equipped with:
\begin{enumerate}
\item{a morphism $\epsilon \colon 1_{\lD} \to F(1_{\lC})$}
\item{a natural transformation $\mu_{A,B} \colon F(A) \otimes F(B) \to F(A \otimes B)$ for all objects $A$ and $B$ of $\lC$}
\item{a morphism $\delta \colon U_{1_{\lD}} \to F(U_{1_{\lC}})$}
\item{a natural transformation $\nu_{M,N} \colon F(M) \otimes F(N) \to F(M \otimes N)$ for all horizontal 1-cells $N$ and $M$ of $\lC$}
\end{enumerate}
such that the following diagrams commute: for objects $A,B$ and $C$ of $\lC$,
 \[\xymatrix{
    (F(A) \otimes F(B)) \otimes F(C) \ar[r]^{\alpha^\prime}\ar[d]_{\mu_{A,B} \otimes 1}
    & F(A) \otimes (F(B) \otimes F(C)) \ar[d]^{1 \otimes \mu_{B,C}}\\
    F(A \otimes B) \otimes F(C) \ar[d]_{\mu_{A \otimes B,C}} &
    F(A) \otimes F(B \otimes C) \ar[d]^{\mu_{A,B \otimes C}}\\
    F((A \otimes B) \otimes C)\ar[r]^{F\alpha} &
    F(A \otimes (B \otimes C))}\]
\[
\begin{tikzpicture}[scale=1.5]
\node (A) at (1,1) {$F(A) \otimes 1_{\lD}$};
\node (C) at (3,1) {$F(A)$};
\node (A') at (1,-1) {$F(A) \otimes F(1_{\lC})$};
\node (C') at (3,-1) {$F(A \otimes 1_{\lC})$};
\node (B) at (5,1) {$1_{\lD} \otimes F(A)$};
\node (B') at (5,-1) {$F(1_{\lC}) \otimes F(A)$};
\node (D) at (7,1) {$F(A)$};
\node (D') at (7,-1) {$F(1_{\lC} \otimes A)$};
\path[->,font=\scriptsize,>=angle 90]
(A) edge node[left]{$1 \otimes \epsilon$} (A')
(C') edge node[right]{$F(r_{A})$} (C)
(A) edge node[above]{$r_{F(A)}$} (C)
(A') edge node[above]{$\mu_{A,1_{\lC}}$} (C')
(B) edge node[left]{$\epsilon \otimes 1$} (B')
(B') edge node[above]{$\mu_{1_{\lC},A}$} (D')
(B) edge node[above]{$\ell_{F(A)}$} (D)
(D') edge node[right]{$F(\ell_{A})$} (D);
\end{tikzpicture}
\]
and for horizontal 1-cells $N_{1},N_{2}$ and $N_{3}$ of $\lC$,
 \[\xymatrix{
    (F(N_{1}) \otimes F(N_{2})) \otimes F(N_{3}) \ar[r]^{\alpha^\prime}\ar[d]_{\nu_{N_{1},N_{2}} \otimes 1}
    & F(N_{1}) \otimes (F(N_{2}) \otimes F(N_{3})) \ar[d]^{1 \otimes \nu_{N_{2},N_{3}}}\\
    F(N_{1} \otimes N_{2}) \otimes F(N_{3}) \ar[d]_{\nu_{N_{1} \otimes N_{2},N_{3}}} &
    F(N_{1}) \otimes F(N_{2} \otimes N_{3}) \ar[d]^{\nu_{N_{1},N_{2} \otimes N_{3}}}\\
    F((N_{1} \otimes N_{2}) \otimes N_{3})\ar[r]^{F\alpha} &
    F(N_{1} \otimes (N_{2} \otimes N_{3}))}\]
\[
\begin{tikzpicture}[scale=1.5]
\node (A) at (1,1) {$F(N_{1}) \otimes U_{1_{\lD}}$};
\node (C) at (3,1) {$F(N_{1})$};
\node (A') at (1,-1) {$F(N_{1}) \otimes F(U_{1_{\lC}})$};
\node (C') at (3,-1) {$F(N_{1} \otimes U_{1_{\lC}})$};
\node (B) at (5,1) {$U_{1_{\lD}} \otimes F(N_{1})$};
\node (B') at (5,-1) {$F(U_{1_{\lC}}) \otimes F(N_{1})$};
\node (D) at (7,1) {$F(N_{1})$};
\node (D') at (7,-1) {$F(U_{1_{\lC}} \otimes N_{1})$};
\path[->,font=\scriptsize,>=angle 90]
(A) edge node[left]{$1 \otimes \delta$} (A')
(C') edge node[right]{$F(r_{N_{1}})$} (C)
(A) edge node[above]{$r_{F(N_{1})}$} (C)
(A') edge node[above]{$\nu_{N_{1},U_{1_{\lC}}}$} (C')
(B) edge node[left]{$\delta \otimes 1$} (B')
(B') edge node[above]{$\nu_{U_{1_{\lC}},N_{1}}$} (D')
(B) edge node[above]{$\ell_{F(N_{1})}$} (D)
(D') edge node[right]{$F(\ell_{N_{1}})$} (D);
\end{tikzpicture}
\]
We also require that:
\begin{enumerate}
\item{We have equalities $S \circ F_1 = F_0 \circ S$ and $T \circ F_1 = F_0 \circ T$ of lax monoidal functors.}
\item{The composition constraints for the lax double functor $F$ are monoidal natural transformations.}
\end{enumerate}
\end{defn}
Note that our monoidal lax double functors laxly preserve both the tensor product and composition, so that we in fact have \emph{lax} monoidal lax double functors. We will simply say `lax monoidal double functor' to avoid repetitiveness.
\begin{defn}
A \textbf{braided  lax monoidal double functor} $F \colon \lC \to \lD$ between braided monoidal pseudo double categories is a lax monoidal double functor that makes the following diagrams commute for all objects $A$ and $B$ of $\lC$ and all horizontal 1-cells $M$ and $N$ of $\lC$.
\[
\begin{tikzpicture}[scale=1.5]
\node (A) at (1,1) {$F(A) \otimes F(B)$};
\node (C) at (3,1) {$F(B) \otimes F(A)$};
\node (A') at (1,-1) {$F(A \otimes B)$};
\node (C') at (3,-1) {$F(B \otimes A)$};
\node (B) at (5,1) {$F(M) \otimes F(N)$};
\node (B') at (5,-1) {$F(M \otimes N)$};
\node (D) at (7,1) {$F(N) \otimes F(M)$};
\node (D') at (7,-1) {$F(N \otimes M)$};
\path[->,font=\scriptsize,>=angle 90]
(A) edge node[left]{$\mu_{A,B}$} (A')
(C) edge node[right]{$\mu_{B,A}$} (C')
(A) edge node[above]{$\beta_{F(A),F(B)}$} (C)
(A') edge node[above]{$F(\beta_{A,B})$} (C')
(B) edge node[left]{$\nu_{M,N}$} (B')
(B') edge node[above]{$F(\beta_{M,N})$} (D')
(B) edge node[above]{$\beta_{F(M),F(N)}$} (D)
(D) edge node[right]{$\nu_{N,M}$} (D');
\end{tikzpicture}
\]

\end{defn}
\begin{defn}
A \textbf{symmetric lax monoidal double functor} is a braided lax monoidal double functor between symmetric monoidal pseudo double categories.
\end{defn}

\section{Main Results}

First we will construct a symmetric lax monoidal double functor $F^\prime \colon \text{Cospan}(\textbf{C}) \to \textbf{BD}$ where $\textbf{BD}$ is the symmetric monoidal category $\textbf{D}$ viewed as a one object bicategory and where Cospan$(\bold{C})$ is a symmetric monoidal bicategory whose objects are that of $\bold{C}$, morphisms are cospans and 2-morphisms are maps of cospans, where a map of cospans is a map between the apices of two cospans such that the resulting two adjacent triangles commute. We will do this in the following sequence of propostions.

\begin{prop}\label{Proposition 4.1}
There exists a pseudo double category $\textnormal{Cospan}$($\bold{C}$).
\end{prop}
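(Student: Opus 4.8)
The plan is to spell out the data of a pseudo double category explicitly and then let the universal property of the chosen pushouts discharge all of the axioms. Take $\bold{D_0}=\bold{C}$. Let $\bold{D_1}$ be the category whose objects are cospans $X\xrightarrow{i}N\xleftarrow{o}Y$ in $\bold{C}$ and whose morphisms from $X\xrightarrow{i}N\xleftarrow{o}Y$ to $X'\xrightarrow{i'}N'\xleftarrow{o'}Y'$ are triples $(f,h,g)$ with $f\colon X\to X'$, $h\colon N\to N'$, $g\colon Y\to Y'$ satisfying $hi=i'f$ and $ho=o'g$, composed componentwise. Let $S,T\colon\bold{D_1}\rightrightarrows\bold{D_0}$ be the two ``foot'' functors (on morphisms, the outer components), and $U\colon\bold{D_0}\to\bold{D_1}$ the functor sending $A$ to $A\xrightarrow{1_A}A\xleftarrow{1_A}A$ and a morphism to its threefold copy, so that $S(U_A)=A=T(U_A)$ strictly. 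For $\odot$, given cospans $M=(X\to N\leftarrow Y)$ and $M'=(Y\to N'\leftarrow Z)$ with matching middle foot, put $M\odot M'=(X\to N+_Y N'\leftarrow Z)$ using the chosen pushout of $N\leftarrow Y\to N'$; on morphisms the apex component is the unique induced map of pushouts compatible with the coprojections. The strict identities relating $S$, $T$, $U$ and $\odot$ then hold by construction.

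Now check the remaining structure and axioms, all of which reduce to the uniqueness clause of the universal property of colimits. First, $\odot$ is a functor, since the map on pushouts induced by a composite of cospan morphisms coincides with the composite of the induced maps, and identities are preserved. Next, define the associator $\alpha_{M,M',M''}\colon(M\odot M')\odot M''\we M\odot(M'\odot M'')$ as the canonical comparison isomorphism between the two iterated pushouts---both compute the colimit of the same finite zig-zag diagram in $\bold{C}$---and define $\lambda\colon U_B\odot M\we M$, $\rho\colon M\odot U_A\we M$ from the fact that a pushout along an identity arrow returns the original apex up to canonical isomorphism. Each of $\alpha$, $\lambda$, $\rho$ is the identity on the outer feet, so $S$ and $T$ carry it to an identity, as required. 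Naturality of $\alpha$, $\lambda$, $\rho$ and the pentagon and triangle coherence identities then hold because, in each case, the two composites are maps out of an iterated pushout that agree after postcomposition with every coprojection.

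The one point deserving care---the crux of the argument---is arranging the comparison isomorphisms so that they are genuinely natural and coherent rather than merely defined objectwise. The clean way is to observe that every finitely iterated horizontal composite of cospans is canonically the colimit in $\bold{C}$ of the evident finite diagram assembled from its feet and apices, that the chosen pushouts compute these colimits, and that every structure map in sight is the unique comparison between two such presentations; naturality and coherence are then forced. (Alternatively one may cite that $\textnormal{Cospan}(\bold{C})$ is the cospan counterpart of B\'enabou's bicategory of spans, promoted to a double category in Shulman's framework, but writing the data out directly keeps the argument self-contained.) Once all required diagrams are verified, $(\bold{D_0},\bold{D_1},U,S,T,\odot,\alpha,\lambda,\rho)$ is a pseudo double category, which we denote $\textnormal{Cospan}(\bold{C})$.
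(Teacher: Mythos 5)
Your proof is correct and follows the same route as the paper: define $\bold{D_0}=\bold{C}$, let $\bold{D_1}$ have cospans as objects and commuting triples as morphisms, compose horizontally via the chosen pushouts, and obtain the associator and unitors (together with their naturality and coherence) from the universal property of colimits. You simply spell out in more detail the steps the paper leaves as ``obvious'' or ``by definition.''
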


\begin{proof}
Objects are given by objects of $\textbf{C}$ and vertical 1-morphisms are morphisms of $\textbf{C}$. Horizontal 1-cells are cospans in $\textbf{C}$ and 2-morphisms are triples of maps $(a, \phi, c)$ between two cospans in $\textbf{C}$ such that two adjacent commuting squares result.

\[
\begin{tikzpicture}[scale=1.5]
\node (A) at (1,1) {$A$};
\node (B) at (2,1) {$B$};
\node (C) at (3,1) {$C$};
\node (A') at (1,0) {$A'$};
\node (B') at (2,0) {$B'$};
\node (C') at (3,0) {$C'$};
\path[->,font=\scriptsize,>=angle 90]
(A) edge node[above]{$f$} (B)
(A') edge node[above]{$f^\prime$} (B')
(A) edge node[left]{$a$} (A')
(C) edge node[above]{$g$} (B)
(C') edge node[above]{$g'$} (B')
(B) edge node[left]{$\phi$} (B')
(C) edge node[right]{$c$} (C');
\end{tikzpicture}
\]
The source, target and unit functors $S, T$ and $U$, respecively, are obvious. The associator comes from the universal property of a pushout and the left unit law comes from $B+_{B}A$ and $A$ both being colimits of the span $B \xleftarrow{\id} B \rightarrow A$ and the right unit law is similar. By definition, this gives us a pseudo double category \textnormal{Cospan}($\bold{C}$).
\end{proof}

\begin{prop}\label{Proposition 4.2}
The pseudo double category $\textnormal{Cospan}(\bold{C})$ is symmetric monoidal.
\end{prop}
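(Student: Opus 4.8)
The plan is to equip the pseudo double category $\mathrm{Cospan}(\bold{C})$ with the monoidal structure induced everywhere by the chosen binary coproducts and the initial object $0$ of $\bold{C}$, and then to verify the axioms of a symmetric monoidal double category by systematically appealing to the universal properties of coproducts and pushouts. On $\bold{D_{0}} = \bold{C}$ the coproduct $(+,0)$ is symmetric monoidal, which is standard. On $\bold{D_{1}}$, whose objects are the cospans $X \xto{i} N \xot{o} Y$ and whose morphisms are the triples $(a,\phi,c)$ of Proposition~\ref{Proposition 4.1}, I take the tensor legwise, setting $(X \to N \ot Y)\ten(X' \to N' \ot Y') := \bigl((X{+}X') \to (N{+}N') \ot (Y{+}Y')\bigr)$ on objects and componentwise on $2$-morphisms, with unit the cospan $U_{0} = (0 \to 0 \ot 0)$. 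Since $\bold{D_{1}}$ is the category of $\bold{C}$-valued diagrams on the shape $\bullet\to\bullet\ot\bullet$, all of its colimits---in particular its coproducts---are computed legwise, so $(\bold{D_{1}},\ten,U_{0})$ is automatically symmetric monoidal, with associator, unitors and braiding all obtained legwise from those of $(\bold{C},+,0)$. Conditions (i)--(iii) of the definition of a monoidal double category then hold strictly: $U_{0}=U_{I}$ for $I=0$ is the unit of $\bold{D_{1}}$, and the source and target functors, which read off the two legs of a cospan, are strictly (symmetric) monoidal and preserve the constraints on the nose, precisely because $\ten$ was defined legwise.

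Next I would supply the globular isomorphisms. For $\fu \maps U_{A\ten B}\to U_{A}\ten U_{B}$ there is nothing to do: since $\id_{A}+\id_{B}=\id_{A+B}$ for the chosen coproduct, $U_{A+B}$ and $U_{A}\ten U_{B}$ are literally the same cospan $A{+}B \to A{+}B \ot A{+}B$, so $\fu$ may be taken to be the identity. For $\fx$, write $M_{i}=(A_{i}\to P_{i}\ot B_{i})$ and $N_{i}=(A_{i}'\to P_{i}'\ot B_{i}')$, so that $M_{1}\odot M_{2}$ and $N_{1}\odot N_{2}$ are formed using the chosen pushouts $P_{1}+_{B_{1}}P_{2}$ and $P_{1}'+_{B_{1}'}P_{2}'$; then $(M_{1}\ten N_{1})\odot(M_{2}\ten N_{2})$ has apex $(P_{1}{+}P_{1}')+_{B_{1}+B_{1}'}(P_{2}{+}P_{2}')$ while $(M_{1}\odot M_{2})\ten(N_{1}\odot N_{2})$ has apex $(P_{1}+_{B_{1}}P_{2})+(P_{1}'+_{B_{1}'}P_{2}')$, and both are colimits of one and the same diagram (the disjoint union of two ``composable cospan'' diagrams). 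Hence there is a unique comparison map between them; it is an isomorphism, and it is globular because its two legs are forced to coincide with the legwise coproduct maps.

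Finally I would check the coherence diagrams: the three diagrams of condition (iv) relating $\fx,\fu$ to $\alpha,\lambda,\rho$, the diagrams of (v)--(vi) expressing that $\alpha$, the unitors and $U$ are transformations of double categories, and the two diagrams of condition (ix) for the braiding. In each case the diagram consists entirely of morphisms out of coproducts and pushouts, so its commutativity is forced by the uniqueness clause of the relevant universal property; nothing genuinely new arises, and since $+$ is symmetric the resulting braided structure is automatically symmetric, giving condition (x). The work here is essentially bookkeeping, and the only point where real care is needed is verifying the two large pentagon/hexagon-type diagrams governing $\fx$ and making sure the chosen pushouts and coproducts are used consistently throughout, so that the comparison $2$-morphisms come out strictly natural where the definition demands it; much of this is streamlined by the observation that $\bold{D_{1}}$ is a functor category and both $\odot$ and $\ten$ are colimit constructions in it.
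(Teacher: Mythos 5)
Your proposal is correct and follows essentially the same route as the paper: the unit is the trivial cospan on $0$, the tensor on the arrow category is taken legwise via the chosen coproducts, and the interchange isomorphism $\fx$ is obtained by observing that $(C_1+C_2)+_{B_1+B_2}(D_1+D_2)$ and $(C_1+_{B_1}D_1)+(C_2+_{B_2}D_2)$ are both colimits of the same diagram of two composable cospans, with the coherence axioms then following from universal properties. Your additional remarks (that $\fu$ can be taken to be the identity, and that $\bold{D_1}$ is a diagram category so its coproducts are computed legwise) are accurate refinements of what the paper leaves implicit.
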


\begin{proof}
This follows from the definition of symmetric monoidal pseudo double category with the trivial cospan $0 \to 0 \leftarrow 0$ as the unit for the horizontal edge category and that we have isomorphisms between the cospans

\begin{center}
\begin{tikzpicture}[->,>=stealth',node distance=2cm, auto]
 \node (A) {$A_{1}+A_{2}$};
 \node (A1) [above of=A,right of=A] {$C_{1}+C_{2}$};
 \node (A2) [below of=A1,right of=A1] {$B_{1}+B_{2}$}; 
 \node (S) [above of=A2,right of=A2] {$D_{1}+D_{2}$};
 \node (B) [below of=S,right of=S] {$E_{1}+E_{2}$};
 \node (AS) [above of=A1,right of=A1] {$(C_{1}+C_{2}) +_{B_{1}+B_{2}} (D_{1}+D_{2})$};
 \draw[->] (A) to node [swap]{$$} (A1);
 \draw[->] (A2) to node {$$} (A1);
 \draw[->] (A2) to node [swap]{$$}(S);
\draw[->] (B) to node {}(S);
 \draw[->] (A1) to node [swap]{$$}(AS);
\draw[->] (S) to node {$$}(AS);
\end{tikzpicture}
\end{center}
and

\begin{center}
\begin{tikzpicture}[->,>=stealth',node distance=1.2cm, auto]
 \node(A1) {$A_{1}$};
 \node(C1) [above of=A1,right of=A1] {$C_{1}$};
 \node(B1) [below of=C1,right of=C1] {$B_{1}$};
 \node(D1) [above of=B1,right of=B1] {$D_{1}$};
 \node(E1) [below of=D1,right of=D1] {$E_{1}$};
 \node(A2) [right of=E1] {$A_{2}$};
 \node(C2) [above of=A2,right of=A2] {$C_{2}$};
 \node(B2) [below of=C2,right of=C2] {$B_{2}$};
 \node(D2) [above of=B2,right of=B2] {$D_{2}$};
 \node(E2) [below of=D2,right of=D2] {$E_{2}$};
 \node(C1+D1) [above of=C1,right of=C1] {$C_{1}+D_{1}$};
 \node(C2+D2) [above of=C2,right of=C2] {$C_{2}+D_{2}$};
 \node(C1pD1) [above of=C1+D1] {$C_{1} +_{B_{1}} D_{1}$};
 \node(C2pD2) [above of=C2+D2] {$C_{2} +_{B_{2}} D_{2}$};
 \node(C1pD1+C2pD2) [above right=1.4cm and 0cm of C1pD1] {$(C_{1} +_{B_{1}} D_{1}) + (C_{2} +_{B_{2}} D_{2})$};
 \draw[->] (A1) to node [swap]{$$} (C1);
 \draw[->] (B1) to node [swap]{$$} (C1);
 \draw[->] (B1) to node [swap]{$$} (D1);
 \draw[->] (E1) to node [swap]{$$} (D1);
 \draw[->] (A2) to node [swap]{$$} (C2);
 \draw[->] (B2) to node [swap]{$$} (C2);
 \draw[->] (B2) to node [swap]{$$} (D2);
 \draw[->] (E2) to node [swap]{$$} (D2);
 \draw[->] (C1) to node [swap]{$$} (C1+D1);
 \draw[->] (D1) to node [swap]{$$} (C1+D1);
 \draw[->] (C2) to node [swap]{$$} (C2+D2);
 \draw[->] (D2) to node [swap]{$$} (C2+D2);
 \draw[->] (C1+D1) to node [swap]{$$} (C1pD1);
 \draw[->] (C2+D2) to node [swap]{$$} (C2pD2);
 \draw[->] (C1pD1) to node [swap] {$$} (C1pD1+C2pD2);
 \draw[->] (C2pD2) to node [swap] {$$} (C1pD1+C2pD2);
\end{tikzpicture}.
\end{center}
Both $(C_{1} + C_{2}) +_{B_{1}+B_{2}} (D_{1} + D_{2})$ and $(C_{1} +_{B_{1}} D_{1}) + (C_{2} +_{B_{2}} D_{2})$ are colimits of the following diagram consisting of cospans $M_{1},M_{2}$ and $N_{1}, N_{2}$ with shared feet $B_{1}$ and $B_{2}$,  respectively

\begin{center}
\begin{tikzpicture}[->,>=stealth',node distance=1.1cm, auto]
 \node(A1) {$A_{1}$};
 \node(C1) [above of=A1,right of=A1] {$C_{1}$};
 \node(B1) [below of=C1,right of=C1] {$B_{1}$};
 \node(D1) [above of=B1,right of=B1] {$D_{1}$};
 \node(E1) [below of=D1,right of=D1] {$E_{1}$};
 \node(A2) [right of=E1] {$A_{2}$};
 \node(C2) [above of=A2,right of=A2] {$C_{2}$};
 \node(B2) [below of=C2,right of=C2] {$B_{2}$};
 \node(D2) [above of=B2,right of=B2] {$D_{2}$};
 \node(E2) [below of=D2,right of=D2] {$E_{2}$};
 \draw[->] (A1) to node [swap]{$$} (C1);
 \draw[->] (B1) to node [swap]{$$} (C1);
 \draw[->] (B1) to node [swap]{$$} (D1);
 \draw[->] (E1) to node [swap]{$$} (D1);
 \draw[->] (A2) to node [swap]{$$} (C2);
 \draw[->] (B2) to node [swap]{$$} (C2);
 \draw[->] (B2) to node [swap]{$$} (D2);
 \draw[->] (E2) to node [swap]{$$} (D2);
\end{tikzpicture}
\end{center}
\noindent
and this gives the globular isomorphism $\fx\maps (M_1\ten N_1)\odot (M_2\ten N_2)\too[\sim] (M_1\odot M_2)\ten (N_1\odot N_2)$. The globular isomorphism $\fx$ makes the rest of the diagrams in the definition straightforward. The pentagon and triangle equations are also straightforward; all of the maps are given by maps of cospans, which in the double category case, are triples of isomorphisms between cospans as in the diagram in the previous proposition.
\end{proof}

\begin{prop}\label{Proposition 4.3}
The one-object bicategory $\bold{BD}$ forms a pseudo double category with one object $\{*\}$, whose only vertical 1-morphism is the identity, whose horizontal 1-cells are objects of $\bold{D}$ and whose 2-morphisms are corresponding squares. We denote this double category as $\lB \lD$.
\end{prop}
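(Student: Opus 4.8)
The plan is to recognise this proposition as an instance of the standard ``delooping'' construction: a pseudo double category whose vertical $1$-morphisms are all trivial is exactly a monoidal category, and we simply repackage $(\bold{D},\otimes,I)$ in this form. Concretely, I would take the category of objects to be $\bold{D_0} = \bold{1}$, the terminal category with unique object $*$ and only the identity arrow, and the category of arrows to be $\bold{D_1} = \bold{D}$ itself. Thus objects of $\bold{D}$ become horizontal $1$-cells $* \hto *$, and morphisms of $\bold{D}$ become $2$-morphisms; since the only available vertical $1$-morphism is $\id_*$, such a $2$-morphism is a square with identities on its left and right edges, which is precisely the ``corresponding square'' of the statement. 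The source and target functors $S,T \colon \bold{D} \to \bold{1}$ are the unique functors to the terminal category, and the unit functor $U \colon \bold{1} \to \bold{D}$ sends $*$ to the monoidal unit $I$ and $\id_*$ to $\id_I$.

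For the horizontal composition, note that because $\bold{D_0}=\bold{1}$ is terminal the pullback $\bold{D_1} \times_{\bold{D_0}} \bold{D_1}$ formed over $\bold{D_1}\xrightarrow{T}\bold{D_0}\xleftarrow{S}\bold{D_1}$ is just the product $\bold{D}\times\bold{D}$, so I would set $\odot = \otimes\colon \bold{D}\times\bold{D}\to\bold{D}$; this is a functor precisely because $\otimes$ is a bifunctor. All the strictness identities demanded of a pseudo double category are then automatic, since every object and arrow of $\bold{D_0}$ in sight is $*$ or $\id_*$: indeed $S(U_*) = * = T(U_*)$, and $S(M\odot N) = * = SN$, $T(M\odot N) = * = TM$. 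I would then take the globular isomorphisms $\alpha$, $\lambda$, $\rho$ of the double category to be, respectively, the associator and the left and right unit isomorphisms of the monoidal category $\bold{D}$; these are natural isomorphisms by hypothesis, and since their $S$- and $T$-images are identities on $*$ the requirement that $S(\alpha), T(\alpha), S(\lambda), \dots$ all be identities is vacuous. Finally, the coherence axioms of a monoidal category that the data $(\alpha,\lambda,\rho)$ must satisfy in a pseudo double category are, under these identifications, literally Mac Lane's pentagon and triangle axioms for $(\bold{D},\otimes,I)$, hence hold. This exhibits $\lB\lD$ as a pseudo double category.

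There is no real obstacle here; the proposition is pure bookkeeping, amounting to the observation that ``pseudo double category with only trivial vertical $1$-morphisms'' and ``monoidal category'' carry the same data. The only two points that deserve an explicit sentence are the identification of the pullback $\bold{D_1}\times_{\bold{D_0}}\bold{D_1}$ with $\bold{D}\times\bold{D}$ when $\bold{D_0}$ is terminal, and the routine check that the conditions forcing $S$ and $T$ to send the constraint $2$-cells to identities impose nothing, so that the monoidal coherence of $\bold{D}$ is not only necessary but sufficient.
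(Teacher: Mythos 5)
Your proposal is correct and follows essentially the same route as the paper: identify $\bold{D_0}$ with the terminal category, $\bold{D_1}$ with $\bold{D}$, and let $\odot$ be $\otimes$, so that the double-category coherence data and axioms reduce to the monoidal structure and Mac Lane coherence of $\bold{D}$. In fact you are somewhat more explicit than the paper, which only tabulates the cells and the $\otimes$-compositions without spelling out the unit functor $U$ or the constraint isomorphisms $\alpha,\lambda,\rho$.
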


\begin{proof}
We have one object which we denote as $\{*\}$, and trivial composition of vertical 1-morphisms as we only have $\id_{*} \colon \{*\} \to \{*\}$. The objects of $\bold{D}$ appear as horizontal 1-cells and 2-morphisms are boxes as in the chart below.

\begin{center}
\begin{tabular}{ |l|l| }
  \hline
  \multicolumn{2}{|c|}{$\lB \lD$} \\
  \hline
  Objects & $\{ * \}$ \\
  \multirow{-4}{*}{Vertical 1-morphisms} & \begin{tikzpicture}[->,>=stealth',node distance=1.1cm, auto]
 \node(a) {$*$};
 \node(b) [below of=a] {$*$};
 \draw[->] (a) to node {Identity morphism} (b);
\end{tikzpicture}\\
  Horizontal 1-cells & \begin{tikzpicture}[->,>=stealth',node distance=1.1cm, auto]
 \node(a) {$*$};
 \node(b) [right of=a] {$*$};
 \draw[->] (a) to node {$d$} (b);
\end{tikzpicture} Objects of $\textbf{D}$ \\
  \multirow{-5}{*}{2-morphisms} & \begin{tikzpicture}[->,>=stealth',node distance=1.1cm, auto]
 \node(a) {$*$};
 \node(b) [right of=a] {$*$};
 \node(c) [below of=a] {$*$};
 \node(d) [right of=c] {$*$};
 \draw[->] (a) to node {$d$} (b);
 \draw[->] (b) to node {$\id_{{*}}$} (d);
 \draw[->] (a) to node [swap]{$\id_{{*}}$} (c);
 \draw[->] (c) to node [swap]{$d'$} (d);
\end{tikzpicture} \\
  \hline
\end{tabular}
\end{center}

Composition of vertical 1-morphisms is trivial. For composition of horizontal 1-cells and 2-morphisms, we have

\begin{center}
\begin{tikzpicture}[->,>=stealth',node distance=1.3cm, auto]
 \node(a) {$*$};
 \node(b) [right of=a] {$*\quad \odot$};
 \draw[->] (a) to node {$d$} (b);
\end{tikzpicture} \begin{tikzpicture}[->,>=stealth',node distance=1.1cm, auto]
 \node(a) {$*$};
 \node(b) [right of=a] {$*$};
 \draw[->] (a) to node {$d'$} (b);
\end{tikzpicture}
=
\begin{tikzpicture}[->,>=stealth',node distance=1.1cm, auto]
 \node(a) {$*$};
 \node(b) [right of=a] {$*$};
 \draw[->] (a) to node {$d\otimes d'$} (b);
\end{tikzpicture}
\end{center}
and

\begin{center}
\begin{tikzpicture}[->,>=stealth',node distance=1.1cm, auto]
 \node(a) {$*$};
 \node(b) [right of=a] {$*$};
 \node(c) [below of=a] {$*$};
 \node(d) [right of=c] {$*$};
 \draw[->] (a) to node {$d_{1}$} (b);
 \draw[->] (b) to node {$\id_{{*}}\text{           }\text{    } \text{  } \odot$} (d);
 \draw[->] (a) to node [swap]{$\id_{{*}}$} (c);
 \draw[->] (c) to node [swap]{$d_{1}'$} (d);
\end{tikzpicture}
\begin{tikzpicture}[->,>=stealth',node distance=1.1cm, auto]
 \node(a) {$*$};
 \node(b) [right of=a] {$*$};
 \node(c) [below of=a] {$*$};
 \node(d) [right of=c] {$*$};
 \draw[->] (a) to node {$d_{2}$} (b);
 \draw[->] (b) to node {$\id_{{*}}\text{       }=$} (d);
 \draw[->] (a) to node [swap]{$\id_{{*}}$} (c);
 \draw[->] (c) to node [swap]{$d_{2}'$} (d);
\end{tikzpicture}
\begin{tikzpicture}[->,>=stealth',node distance=1.1cm, auto]
 \node(a) {$*$};
 \node(b) [right of=a] {$*$};
 \node(c) [below of=a] {$*$};
 \node(d) [right of=c] {$*$};
 \draw[->] (a) to node {$d_{1} \otimes d_{2}$} (b);
 \draw[->] (b) to node {$\id_{{*}}$} (d);
 \draw[->] (a) to node [swap]{$\id_{{*}}$} (c);
 \draw[->] (c) to node [swap]{$d_{1}' \otimes d_{2}'$} (d);
\end{tikzpicture}.
\end{center}
\end{proof}

\begin{prop}\label{Proposition 4.4}
The double category $\lB \lD$ is symmetric monoidal.
\end{prop}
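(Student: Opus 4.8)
The plan is to verify the clauses of the definition of a symmetric monoidal double category one at a time, using that in $\lB\lD$ every structural ingredient is dictated by the symmetric monoidal structure of $\bold{D}$, so that each axiom reduces to the corresponding coherence statement for symmetric monoidal categories.

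First I would record the relevant identifications. In $\lB\lD$ the category of objects $\bold{D_{0}}$ is the terminal category $\bold{1}$, the category of arrows $\bold{D_{1}}$ is $\bold{D}$, horizontal composition $\odot$ is the tensor product $\otimes$ of $\bold{D}$, and $U_{\{*\}} = I$ is the unit of $\bold{D}$. With these in hand, clauses (i) and (ii) are immediate: $\bold{D_{0}} = \bold{1}$ carries its unique (symmetric) monoidal structure, $\bold{D_{1}} = \bold{D}$ carries the given one, and $U_{\{*\}} = I$ is the monoidal unit of $\bold{D_{1}}$ because $\{*\}$ is the monoidal unit of $\bold{D_{0}}$. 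Clause (iii) is automatic, since $S, T\colon \bold{D_{1}} \to \bold{D_{0}}$ are the unique functors into the terminal category and hence strict symmetric monoidal, preserving all the constraints on the nose.

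Next I would produce the globular structure isomorphisms. Since $\odot = \otimes$, the isomorphism $\fx\colon (M_{1}\otimes N_{1})\odot(M_{2}\otimes N_{2})\too[\sim](M_{1}\odot M_{2})\otimes(N_{1}\odot N_{2})$ is taken to be the canonical interchange isomorphism $(M_{1}\otimes N_{1})\otimes(M_{2}\otimes N_{2})\too[\sim](M_{1}\otimes M_{2})\otimes(N_{1}\otimes N_{2})$ assembled from the associator and braiding of $\bold{D}$; the isomorphism $\fu\colon U_{\{*\}\otimes\{*\}}\too[\sim] U_{\{*\}}\otimes U_{\{*\}}$ is $I\too[\sim] I\otimes I$, namely $\lambda_{I}^{-1}$; and $\fs$ is simply the braiding of $\bold{D}$. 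Each of these is globular because $S$ and $T$ are constant. The last clause --- that $\bold{D_{0}}$ and $\bold{D_{1}}$ are symmetric, together with $\fs\circ\fs = \mathrm{id}$ --- then holds precisely because $\bold{D}$ is symmetric.

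The remaining work is to check the coherence diagrams attached to $\fx$, $\fu$, $\fs$ and to the associativity, unit, and braiding transformations. I expect the only obstacle here to be bookkeeping: once each diagram is rewritten along $\odot = \otimes$, $U_{\{*\}} = I$, and the fact that $U_{(-)}$ carries the trivial constraints $a, \ell, \fs$ of $\bold{D_{0}}$ to identities, every edge becomes a composite of associators, unitors and braidings of $\bold{D}$ between iterated tensor products of the $M_{i}, N_{j}, P_{k}$ and $I$, and the two legs of each diagram are parallel. Such a diagram is one of the standard coherence diagrams of symmetric monoidal category theory (hexagons, interchange coherence, compatibility of the braiding with the associator and unitors), and so commutes by the symmetric monoidal axioms for $\bold{D}$. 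Assembling these checks yields that $\lB\lD$ is a symmetric monoidal double category.
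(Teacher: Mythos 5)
Your proposal is correct and follows essentially the same route as the paper: identify $\bold{D_0}=\bold{1}$, $\bold{D_1}=\bold{D}$, $\odot=\otimes$, $U_{\{*\}}=I$, and reduce everything to the symmetric monoidal structure of $\bold{D}$. In fact you are more explicit than the paper (which essentially just asserts ``as $\bold{D}$ is symmetric monoidal, it follows that $\lB\lD$ is symmetric monoidal''), in particular by naming $\fx$ as the interchange map built from the associator and braiding, $\fu=\lambda_I^{-1}$, and by invoking symmetric monoidal coherence for the remaining diagrams.
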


\begin{proof}
%This follows from the definition of symmetric monoidal pseudo double category with $I \to I \leftarrow I$ as the unit for horizontal %composition, where $I$ is the unit object of $\bold{D}$.

Tensoring of types of objects corresponds with compositions as in the previous proposition. Namely, tensoring of objects and vertical 1-morphisms is trivial. For tensoring of horizontal 1-cells and 2-morphisms, we have

\begin{center}
\begin{tikzpicture}[->,>=stealth',node distance=1.3cm, auto]
 \node(a) {$*$};
 \node(b) [right of=a] {$*\quad \otimes$};
 \draw[->] (a) to node {$d$} (b);
\end{tikzpicture} \begin{tikzpicture}[->,>=stealth',node distance=1.1cm, auto]
 \node(a) {$*$};
 \node(b) [right of=a] {$*$};
 \draw[->] (a) to node {$d'$} (b);
\end{tikzpicture}
=
\begin{tikzpicture}[->,>=stealth',node distance=1.1cm, auto]
 \node(a) {$*$};
 \node(b) [right of=a] {$*$};
 \draw[->] (a) to node {$d\otimes d'$} (b);
\end{tikzpicture}
\end{center}
and

\begin{center}
\begin{tikzpicture}[->,>=stealth',node distance=1.1cm, auto]
 \node(a) {$*$};
 \node(b) [right of=a] {$*$};
 \node(c) [below of=a] {$*$};
 \node(d) [right of=c] {$*$};
 \draw[->] (a) to node {$d_{1}$} (b);
 \draw[->] (b) to node {$\id_{{*}}\text{           }\text{    } \text{  } \otimes$} (d);
 \draw[->] (a) to node [swap]{$\id_{{*}}$} (c);
 \draw[->] (c) to node [swap]{$d_{1}'$} (d);
\end{tikzpicture}
\begin{tikzpicture}[->,>=stealth',node distance=1.1cm, auto]
 \node(a) {$*$};
 \node(b) [right of=a] {$*$};
 \node(c) [below of=a] {$*$};
 \node(d) [right of=c] {$*$};
 \draw[->] (a) to node {$d_{2}$} (b);
 \draw[->] (b) to node {$\id_{{*}}\text{       }=$} (d);
 \draw[->] (a) to node [swap]{$\id_{{*}}$} (c);
 \draw[->] (c) to node [swap]{$d_{2}'$} (d);
\end{tikzpicture}
\begin{tikzpicture}[->,>=stealth',node distance=1.1cm, auto]
 \node(a) {$*$};
 \node(b) [right of=a] {$*$};
 \node(c) [below of=a] {$*$};
 \node(d) [right of=c] {$*$};
 \draw[->] (a) to node {$d_{1} \otimes d_{2}$} (b);
 \draw[->] (b) to node {$\id_{{*}}$} (d);
 \draw[->] (a) to node [swap]{$\id_{{*}}$} (c);
 \draw[->] (c) to node [swap]{$d_{1}' \otimes d_{2}'$} (d);
\end{tikzpicture}
\end{center}
The unit for horizontal composition is given by $* \xrightarrow{I} *$ where $I$ is the unit object of $\bold{D}$ and the unit for 2-morphisms is

\begin{center}
\begin{tikzpicture}[->,>=stealth',node distance=1.1cm, auto]
 \node(a) {$*$};
 \node(b) [right of=a] {$*$};
 \node(c) [below of=a] {$*$};
 \node(d) [right of=c] {$*$};
 \draw[->] (a) to node {$\id_{*}$} (b);
 \draw[->] (b) to node {$\id_{*}$} (d);
 \draw[->] (a) to node [swap]{$\id_{*}$} (c);
 \draw[->] (c) to node [swap]{$\id_{*}$} (d);
\end{tikzpicture}
\end{center}
As $\bold{D}$ is symmetric monoidal, it follows that $\lB \lD$ is symmetric monoidal.
\end{proof}

\begin{prop}\label{Proposition 4.5}
There exists a symmetric lax monoidal double functor $F^\prime \colon \textnormal{Cospan}(\bold{C}) \to \lB \lD$.
\end{prop}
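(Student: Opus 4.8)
The plan is to construct $F'$ directly, one cell type at a time, and then reduce every coherence condition to the matching axiom for $F$ as a symmetric lax monoidal functor, together with an elementary diagram chase in $\mathbf{C}$ using the universal properties of its coproducts and pushouts. Since $\lB\lD$ has a single 0-cell $\{*\}$ and only the identity vertical 1-morphism $\id_*$, the object and vertical parts of $F'$ are forced: every object of $\mathbf{C}$ goes to $\{*\}$ and every morphism of $\mathbf{C}$ to $\id_*$. On a horizontal 1-cell, i.e.\ a cospan $X \xrightarrow{i} N \xleftarrow{o} Y$, I would set $F'$ to be $F(N)$, regarded as a horizontal 1-cell $* \to *$ of $\lB\lD$, and on a 2-morphism of $\mathrm{Cospan}(\mathbf{C})$ --- a map of cospans whose apex component is $h \colon N \to N'$ --- to be $F(h)$, regarded as a 2-morphism of $\lB\lD$; in short, $F'$ records $F$ on apices and collapses the feet. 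Functoriality of the resulting assignments $\mathbf{D}_0 \to (\lB\lD)_0$ and $\mathbf{D}_1 \to (\lB\lD)_1$, and strict compatibility with the source and target functors, are immediate from functoriality of $F$.

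For the comparison data, the horizontal identity $1_{F'(A)} = 1_*$ in $\lB\lD$ is the monoidal unit $I$ of $\mathbf{D}$, while $F'$ of the identity cospan on $A$ is $F(A)$, so the identity comparison must be a morphism $\phi_A \colon I \to F(A)$ of $\mathbf{D}$; I would take it to be $I \xrightarrow{\phi_0} F(0) \xrightarrow{F(!_A)} F(A)$, where $\phi_0$ is the unit comparison of the lax monoidal functor $F$ and ${!_A} \colon 0 \to A$ is the unique map out of the initial object, with naturality in $A$ automatic since $g \circ {!_A} = {!_{A'}}$ for every $g \colon A \to A'$. For composable cospans with apices $N, N'$ and shared foot $Y$, the composite cospan has apex $N +_Y N'$, and (in the order fixed by the definition) the composition comparison must be a morphism $F(N) \otimes F(N') \to F(N +_Y N')$; take it to be
\[
F(N) \otimes F(N') \xrightarrow{\phi_{N,N'}} F(N + N') \xrightarrow{F(J_{N,N'})} F(N +_Y N'),
\]
where $\phi_{N,N'}$ is the multiplication of $F$ and $J_{N,N'}$ the canonical map from coproduct to pushout --- precisely the map whose precomposition with the decorations $s \otimes s'$ produces the $s''$ of Fong's theorem.

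It then remains to verify coherence. For the three diagrams defining a lax double functor I expect the following: compatibility of the composition comparison with 2-morphisms follows from naturality of $\phi_{-,-}$ in both slots together with functoriality of $F$ and of $J_{-,-}$; the two unit diagrams follow from the unit axioms of $F$, once one observes that the canonical isomorphism $N +_A A \cong N$, precomposed with $J_{N,A}$ and the coproduct inclusion, equals the unitor $N + 0 \cong N$ of $(\mathbf{C},+,0)$, so that applying $F$ reproduces the unitors of $\mathbf{D}$ transported through $\phi_0$; and the associativity hexagon reduces to the associativity axiom for $\phi_{-,-}$ together with the fact that the maps $J_{-,-}$ and the associativity isomorphism for iterated pushouts assemble into a diagram in $\mathbf{C}$ that commutes by uniqueness of maps out of colimits. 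For the symmetric monoidal structure I would put, on the 0-cell side, $\epsilon = \id_*$ and $\mu_{A,B} = \id_*$, both trivial, and, on the 1-cell side, $\delta = \phi_0 \colon I \to F(0)$ and $\nu_{M,N} = \phi_{N_M,N_N} \colon F(N_M) \otimes F(N_N) \to F(N_M + N_N)$ for cospans $M, N$ with apices $N_M, N_N$; then the $\mu$-coherence diagrams hold trivially, the $\nu$-coherence diagrams are exactly the associativity and unit axioms of the lax monoidal functor $F$ read off on apices, and the braiding square is exactly the symmetry axiom of $F$ read off on apices. Since $\mathrm{Cospan}(\mathbf{C})$ and $\lB\lD$ are symmetric monoidal double categories by Propositions \ref{Proposition 4.2} and \ref{Proposition 4.4}, this makes $F'$ a symmetric lax monoidal double functor.

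The main obstacle will be the associativity coherence --- which appears both for the composition comparison $\phi(-,-)$ and for $\nu$: beyond invoking the associativity axiom of the multiplication of $F$, one has to check that the canonical maps from coproducts to pushouts, and their iterates, intertwine the associator of $(\mathbf{C},+,0)$ with the associator of $\mathrm{Cospan}(\mathbf{C})$ built from the universal property of pushouts. Each individual check is a routine diagram chase, but marshalling them all --- together with the analogous unit verifications --- is where essentially all the work of this proposition lies.
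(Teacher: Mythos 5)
Your construction agrees with the paper's in every essential respect: the same collapse of objects and vertical morphisms to the trivial data of $\lB\lD$, the same action $F$ on apices, the same composition comparison $F(J_{N,N'})\circ\phi_{N,N'}$, the same choice $\epsilon=\mu=\mathrm{id}$, $\delta=e$, $\nu=\phi$, and the same reduction of the associativity hexagon to the lax-monoidal axiom for $F$ plus the universal property of colimits (which the paper packages as a commuting hexagonal prism). Your explicit identity comparison $I\xrightarrow{\phi_0}F(0)\xrightarrow{F(!_A)}F(A)$ and the accompanying unit check are a small point the paper leaves implicit, but they are correct and do not change the approach.
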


\begin{proof}
The functor $F^\prime$ maps every object $c \in$ Ob(Cospan($\bold{C}$)) to the one object $\{*\}$ of $\lB \lD$ and vertical 1-morphisms of $\bold{C}$ map to the vertical 1-morphism $I$ in $\lB \lD$, which is the identity morphism on the single object $\{*\}$ of $\lB \lD$. Horizontal 1-cells of Cospan$(\bold{C})$, which are cospans in $\bold{C}$, map to the horizontal 1-cell $F(c)$, where $c$ is the apex of a cospan in $\textbf{C}$ and $F(c)$ is an object of $\bold{D}$. Then a 2-morphism, which is a triple of maps between between two cospans in $\textbf{C}$, maps to the 2-morphism which is a box with top $F(c)$, left and right sides $I$ and bottom $F(d)$ where $d$ is the apex of the second cospan in $\textbf{C}$. 

\[
\begin{tikzpicture}[scale=1.5]
\node (A) at (1,1) {$a$};
\node (B) at (2,1) {$c$};
\node (C) at (3,1) {$b$};
\node (A') at (1,0) {$a'$};
\node (B') at (2,0) {$d$};
\node (C') at (3,0) {$b'$};
\node (m) at (4,0.5) {$\mapsto$};
\node (D) at (5,0) {$*$};
\node (E) at (5,1) {$*$};
\node (F) at (6,0) {$*$};
\node (G) at (6,1) {$*$};
\node (H) at (5.5,0.5) {$\Swarrow F(h)$};
\path[->,font=\scriptsize,>=angle 90]
(A) edge node[above]{$$} (B)
(A') edge node[above]{$$} (B')
(A) edge node[left]{$f$} (A')
(C) edge node[above]{$$} (B)
(C') edge node[above]{$$} (B')
(B) edge node[left]{$h$} (B')
(C) edge node[right]{$g$} (C')
(E) edge node[left]{$\id_{*}$} (D)
(G) edge node[right]{$\id_{*}$} (F)
(E) edge node[above]{$F(c)$} (G)
(D) edge node[below]{$F(d)$} (F);
\end{tikzpicture}
\]
If $(f,h,g)$ is the underlying 2-morphism in Cospan$(\bold{C})$, this gives us $F(h) \colon I \otimes F(c) \to F(d) \otimes I$, or just $F(h) \colon F(c) \to F(d)$. For notational purposes, we will consider the following cospans and pushouts with shared feet:

\begin{center}
\begin{tikzpicture}[->,>=stealth',node distance=1.1cm, auto]
 \node(X) {$X$};
 \node(N1) [above of=X,right of=X] {$N_{1}$};
 \node(Y) [below of=N1,right of=N1] {$Y$};
 \node(N2) [above of=Y,right of=Y] {$N_{2}$};
 \node(Z) [below of=N2,right of=N2] {$Z$};
 \node(N3) [above of=Z,right of=Z] {$N_{3}$};
 \node(W) [below of=N3,right of=N3] {$W$};
 \node(N1+N2) [above of=N1,right of=N1] {$N_{1}+_{Y}N_{2}$};
 \node(N2+N3) [above of=N2,right of=N2] {$N_{2}+_{Z}N_{3}$};
 \draw[->] (A1) to node [swap]{$$} (C1);
 \draw[->] (B1) to node [swap]{$$} (C1);
 \draw[->] (B1) to node [swap]{$$} (D1);
 \draw[->] (E1) to node [swap]{$$} (D1);
 \draw[->] (N1) to node [swap]{$$} (N1+N2);
 \draw[->] (N2) to node [swap]{$$} (N1+N2);
 \draw[->] (N2) to node [swap]{$$} (N2+N3);
 \draw[->] (Z) to node [swap]{$$} (N3);
 \draw[->] (N3) to node [swap]{$$} (N2+N3);
 \draw[->] (W) to node [swap]{$$} (N3);
\end{tikzpicture}
\end{center}
As $F \colon (\textbf{C},+,0) \to (\textbf{D},\otimes,I)$ is symmetric lax monoidal, we have morphisms $e \colon I \to F(0)$ and $\phi_{N_{1},N_{2}} \colon F(N_{1}) \otimes F(N_{2}) \to F(N_{1} \otimes N_{2})$ such that the following diagrams commute:

 \[\xymatrix{
    (F(N_{1}) \otimes F(N_{2})) \otimes F(N_{3}) \ar[r]^{a'}\ar[d]_{\phi_{N_{1},N_{2}} \otimes 1}
    & F(N_{1}) \otimes (F(N_{2}) \otimes F(N_{3})) \ar[d]^{1 \otimes \phi_{N_{2},N_{3}}}\\
    F(N_{1} + N_{2}) \otimes F(N_{3}) \ar[d]_{\phi_{N_{1}+N_{2},N_{3}}} &
    F(N_{1}) \otimes F(N_{2} + N_{3}) \ar[d]^{\phi_{N_{1},N_{2}+N_{3}}}\\
    F((N_{1}+N_{2})+N_{3})\ar[r]^{Fa} &
    F(N_{1}+(N_{2}+N_{3}))}\]

 \[\vcenter{\xymatrix{
      F(N_{1}) \otimes I\ar[r]^{1 \otimes e}\ar[d]_{\rho} &
      F(N_{1}) \otimes F(0) \ar[d]^{\phi_{N_{1},0}}\\
      F(N_{1}) \ar@{<-}[r]^{F_{\rho}} &
      F(N_{1}+0) }}\]

 \[\vcenter{\xymatrix{
      I \otimes F(N_{1})\ar[r]^{e \otimes 1}\ar[d]_{\lambda} &
      F(0) \otimes F(N_{1}) \ar[d]^{\phi_{0,N_{1}}}\\
      F(N_{1}) \ar@{<-}[r]^{F_{\lambda}} &
      F(0+N_{1}) }}\]
Our goal is to prove that the corresponding hexagon for $F^\prime \colon \text{Cospan}(\textbf{C}) \to \lB \lD$ commutes:

 \[\xymatrix{
    (F(N_{1}) \otimes F(N_{2})) \otimes F(N_{3}) \ar[r]^{a'}\ar[d]_{(F(J_{N_{1},N_{2}}) \circ \phi_{N_{1},N_{2}}) \otimes 1}
    & F(N_{1}) \otimes (F(N_{2}) \otimes F(N_{3})) \ar[d]^{1 \otimes (F(J_{N_{2},N_{3}}) \circ \phi_{N_{2},N_{3}})}\\
    F(N_{1} +_{Y} N_{2}) \otimes F(N_{3}) \ar[d]_{F(J_{N_{1}+_{Y}N_{2},N_{3}}) \circ \phi_{N_{1}+_{Y}N_{2},N_{3}}} &
    F(N_{1}) \otimes F(N_{2} +_{Z} N_{3}) \ar[d]^{F(J_{N_{1},N_{2}+_{Z}N_{3}}) \circ \phi_{N_{1},N_{2}+_{Z}N_{3}}}\\
    F((N_{1}+_{Y}N_{2})+_{Z}N_{3})\ar[r]^{F(a)} &
    F(N_{1}+_{Y}(N_{2}+_{Z}N_{3}))}\]

We do this by realizing the hexagon as one of the horizontal faces, say the bottom, of a hexagonal prism, all of whose sides commute and whose top is the commutative hexagon that comes from $F \colon (\textbf{C},+,0) \to (\textbf{D},\otimes,I)$ being symmetric lax monoidal. Denoting the coequalizer maps $J_{N_{1}+_{Y}N_{2}} \colon N_{1}+N_{2} \to N_{1}+_{Y}N_{2}$ and $J_{N_{2}+_{Z}N_{3}} \colon N_{2}+N_{3} \to N_{2}+_{Z}N_{3}$ simply as $J$, consider the following hexagonal prism:

\begin{center}
\hspace*{-2cm}%
\begin{tikzpicture}[->,>=stealth',node distance=1.2cm, auto,scale=.80]
 \node(A1) {$(F(N_{1}) \otimes F(N_{2})) \otimes F(N_{3})$};
 \node(A2) [below right=0.5cm and 2cm of A1] {$F(N_{1}+N_{2}) \otimes F(N_{3})$};
 \node(A3) [below right=1.5cm and 0.1cm of A2] {$F((N_{1}+N_{2})+N_{3})$};
 \node(A4) [below left=1cm and -1cm of A1] {$F(N_{1}) \otimes (F(N_{2}) \otimes F(N_{3}))$};
 \node(A5) [below right=1.9cm and -1cm of A4] {\colorbox{white}{$F(N_{1}) \otimes F(N_{2}+N_{3})$}};
 \node(A6) [below right=0.5cm and 2.8cm of A5] {\colorbox{white}{$F(N_{1}+(N_{2}+N_{3}))$}};
 \node(B1) [below left=7.5cm and -4cm of A1] {\colorbox{white}{$(F(N_{1}) \otimes F(N_{2})) \otimes F(N_{3})$}};
 \node(B2) [below right=0.5cm and 1.83cm of B1] {\colorbox{white}{$F(N_{1}+_{Y}N_{2}) \otimes F(N_{3})$}};
 \node(B3) [below right=1.5cm and -0.3cm of B2] {$F((N_{1}+_{Y}N_{2})+_{Z}N_{3})$};
 \node(B4) [below left=1cm and -1cm of B1] {$F(N_{1}) \otimes (F(N_{2}) \otimes F(N_{3}))$};
 \node(B5) [below right=1.9cm and -0.85cm of B4] {$F(N_{1}) \otimes F(N_{2}+_{Z}N_{3})$};
 \node(B6) [below right=0.5cm and 2.85cm of B5] {$F(N_{1}+_{Y}(N_{2}+_{Z}N_{3}))$};
 \draw[ultra thick,->] (A1) to node {$\phi \otimes 1$} (A2);
 \draw[ultra thick,->] (A2) to node {$\phi$} (A3);
 \draw[ultra thick,->] (A1) to node [swap]{$a'$} (A4);
 \draw[ultra thick,->] (A4) to node [swap]{$1 \otimes \phi$} (A5);
 \draw[ultra thick,->] (A5) to node {$\phi$} (A6);
 \draw[ultra thick,->] (A6) to node [swap]{$F(a^{-1})$} (A3);
 \draw[dashed,ultra thick,->] (B1) to node [swap]{$(F(J)\circ \phi) \otimes 1$} (B2);
 \draw[dashed,ultra thick,->] (B2) to node {$F(J) \circ \phi$} (B3);
 \draw[dashed,ultra thick,->] (B1) to node [swap]{$a'$} (B4);
 \draw[ultra thick,->] (B4) to node [swap]{$1 \otimes (F(J) \circ \phi)$} (B5);
 \draw[ultra thick,->] (B5) to node [swap]{$F(J) \circ \phi$} (B6);
 \draw[ultra thick,->] (B6) to node [swap]{$F(a^{-1})$} (B3);
 \draw[dashed,ultra thick,->] (A1) to node [near end]{$1$} (B1);
 \draw[dashed,ultra thick,->] (A2) to node [swap,near end]{$F(J) \otimes 1$} (B2);
 \draw[ultra thick,->] (A3) to node {$F(J) \circ F(J)$} (B3);
 \draw[ultra thick,->] (A4) to node [swap]{$1$} (B4);
 \draw[ultra thick,->] (A5) to node [swap,near start]{$1 \otimes F(J)$} (B5);
 \draw[ultra thick,->] (A6) to node [near start] {$F(J) \circ F(J)$} (B6);
 \node(A5) [below right=1.9cm and -1cm of A4] {\colorbox{white}{$F(N_{1}) \otimes F(N_{2}+N_{3})$}};
 \node(A6) [below right=0.5cm and 2.8cm of A5] {\colorbox{white}{$F(N_{1}+(N_{2}+N_{3}))$}};
 \node(B1) [below left=7.5cm and -4cm of A1] {\colorbox{white}{$(F(N_{1}) \otimes F(N_{2})) \otimes F(N_{3})$}};
 \node(B2) [below right=0.5cm and 1.83cm of B1] {\colorbox{white}{$F(N_{1}+_{Y}N_{2}) \otimes F(N_{3})$}};
 \end{tikzpicture}
 \end{center}
The six lateral sides are given as follows:

\begin{center}
\begin{tikzpicture}[->,>=stealth',node distance=1.1cm, auto]
 \node(X) {$(F(N_{1}) \otimes F(N_{2})) \otimes F(N_{3})$};
 \node(Y) [right=5cm of X] {$F(N_{1}) \otimes (F(N_{2}) \otimes F(N_{3}))$};
 \node(Z) [below=2cm of X] {$(F(N_{1}) \otimes F(N_{2})) \otimes F(N_{3})$};
 \node(W) [right=5cm of Z] {$F(N_{1}) \otimes (F(N_{2}) \otimes F(N_{3}))$};
 \draw[->] (X) to node {$a'$} (Y);
 \draw[->] (X) to node [swap]{$1$} (Z);
 \draw[->] (Y) to node {$1$} (W);
 \draw[->] (Z) to node {$a'$} (W);
\end{tikzpicture}
\end{center}

\begin{center}
\begin{tikzpicture}[->,>=stealth',node distance=1.1cm, auto]
 \node(X) {$F(N_{1}) \otimes (F(N_{2}) \otimes F(N_{3}))$};
 \node(Y) [right=5.1cm of X] {$F(N_{1}) \otimes F(N_{2}+N_{3})$};
 \node(Z) [below=2cm of X] {$F(N_{1}) \otimes (F(N_{2}) \otimes F(N_{3}))$};
 \node(W) [right=5cm of Z] {$F(N_{1}) \otimes F(N_{2}+_{Z}N_{3})$};
 \draw[->] (X) to node {$1 \otimes \phi_{N_{2},N_{3}}$} (Y);
 \draw[->] (X) to node [swap]{$1$} (Z);
 \draw[->] (Y) to node {$1 \otimes F(J_{N_{2},N_{3}})$} (W);
 \draw[->] (Z) to node {$1 \otimes (F(J_{N_{2},N_{3}}) \circ \phi_{N_{2},N_{3}})$} (W);
\end{tikzpicture}
\end{center}

\begin{center}
\begin{tikzpicture}[->,>=stealth',node distance=1.1cm, auto]
 \node(X) {$(F(N_{1}) \otimes F(N_{2})) \otimes F(N_{3})$};
 \node(Y) [right=5.1cm of X] {$F(N_{1}+N_{2}) \otimes F(N_{3})$};
 \node(Z) [below=2cm of X] {$(F(N_{1}) \otimes F(N_{2})) \otimes F(N_{3})$};
 \node(W) [right=5cm of Z] {$F(N_{1}+_{Y}N_{2}) \otimes F(N_{3})$};
 \draw[->] (X) to node {$\phi_{N_{1},N_{2}} \otimes 1$} (Y);
 \draw[->] (X) to node [swap]{$1$} (Z);
 \draw[->] (Y) to node {$F(J_{N_{1},N_{2}}) \otimes 1$} (W);
 \draw[->] (Z) to node {$(F(J_{N_{1},N_{2}}) \circ \phi_{N_{1},N_{2}}) \otimes 1$} (W);
\end{tikzpicture}
\end{center}

\begin{center}
\begin{tikzpicture}[->,>=stealth',node distance=1.1cm, auto]
 \node(X) {$F(N_{1}) \otimes F(N_{2}+N_{3})$};
 \node(Y) [right=5.25cm of X] {$F(N_{1}+(N_{2}+N_{3}))$};
 \node(Z) [below=2cm of X] {$F(N_{1}) \otimes F(N_{2}+_{Z}N_{3})$};
 \node(W) [right=5cm of Z] {$F(N_{1}+_{Y}(N_{2}+_{Z}N_{3}))$};
 \draw[->] (X) to node {$\phi_{N_{1},N_{2}+N_{3}}$} (Y);
 \draw[->] (X) to node [swap]{$1 \otimes F(J_{N_{2},N_{3}})$} (Z);
 \draw[->] (Y) to node [swap]{$F(J_{N_{1},N_{2}+_{Z}N_{3}}) \circ (1 + F(J_{N_{2},N_{3}}))$} (W);
 \draw[->] (Z) to node {$F(J_{N_{1},N_{2}+_{Z}N_{3}}) \circ \phi_{N_{1},N_{2}+_{Z}N_{3}}$} (W);
\end{tikzpicture}
\end{center}

\begin{center}
\begin{tikzpicture}[->,>=stealth',node distance=1.1cm, auto]
 \node(X) {$F(N_{1}+N_{2}) \otimes F(N_{3})$};
 \node(Y) [right=5.25cm of X] {$F((N_{1}+N_{2})+N_{3})$};
 \node(Z) [below=2cm of X] {$F(N_{1}+_{Y}N_{2}) \otimes F(N_{3})$};
 \node(W) [right=5cm of Z] {$F((N_{1}+_{Y}N_{2})+_{Z}N_{3})$};
 \draw[->] (X) to node {$\phi_{N_{1},N_{2}+N_{3}}$} (Y);
 \draw[->] (X) to node [swap]{$F(J_{N_{1},N_{2}}) \otimes 1$} (Z);
 \draw[->] (Y) to node [swap]{$F(J_{N_{1}+_{Y}N_{2},N_{3}}) \circ (F(J_{N_{1},N_{2}}) + 1)$} (W);
 \draw[->] (Z) to node {$F(J_{N_{1}+_{Y}N_{2},N_{3}}) \circ \phi_{N_{1}+_{Y}N_{2},N_{3}}$} (W);
\end{tikzpicture}
\end{center}

\begin{center}
\begin{tikzpicture}[->,>=stealth',node distance=1.1cm, auto]
 \node(X) {$F(N_{1}+(N_{2}+N_{3}))$};
 \node(Y) [right=5.35cm of X] {$F((N_{1}+N_{2})+N_{3})$};
 \node(Z) [below=2cm of X] {$F(N_{1}+_{Y}(N_{2}+_{Z}N_{3}))$};
 \node(W) [right=5cm of Z] {$F((N_{1}+_{Y}N_{2})+_{Z}N_{3})$};
 \draw[->] (X) to node {$F(a)$} (Y);
 \draw[->] (X) to node {$F(J_{N_{1},N_{2}+_{Z}N_{3}}) \circ (1+F(J_{N_{2},N_{3}})$} (Z);
 \draw[->] (Y) to node {$F(J_{N_{1}+_{Y}N_{2},N_{3}}) \circ (F(J_{N_{1},N_{2}})+1)$} (W);
 \draw[->] (Z) to node {$F(a)$} (W);
\end{tikzpicture}
\end{center}

The first diagram commutes trivially and the second and third diagrams commute by inspection. The fourth and fifth diagrams commute by naturality of $F$ and the last commutes by universality of coequalizers. As the top face and six lateral sides of the hexagonal prism commute, the bottom hexagon commutes as well. The two diagrams involving the left and right unitors commute because $F \colon \bold{C} \to \bold{D}$ is symmetric lax monoidal. It follows that $F^\prime \colon \text{Cospan}(\bold{C}) \to \lB \lD$ is a lax double functor.

Define $\epsilon \colon 1_{\lB \lD} \to F(1_{\text{Cospan}(\bold{C})})$ and $\mu_{A,B} \colon F(A) \otimes F(B) \to F(A \otimes B)$, where $A$ and $B$ are objects of Cospan$(\bold{C})$, both to be the identity, as $\lB \lD$ has only one vertical 1-morphism, the identity of its only object. As $F^\prime$ acts as $F$ on horizontal 1-cells, define $\delta \colon U_{1_{\lB \lD}} \to F(U_{1_{\text{Cospan}(\bold{C})}})$ and $\nu_{M,N} \colon F(M) \otimes F(N) \to F(M \otimes N)$, where $M$ and $N$ are horizontal 1-cells of Cospan$(\bold{C})$, to be the maps $e$ and $\phi$, respectively, that arise from the functor $F \colon \bold{C} \to \bold{D}$ being symmetric lax monoidal. Then all of the required diagrams for the lax double functor $F^\prime \colon \text{Cospan}(\bold{C}) \to \lB \lD$ to be symmetric monoidal commute, as the four diagrams involving objects and vertical 1-morphisms are trivial and the remaining four diagrams involving horizontal 1-cells are precisely the diagrams that commute because $F \colon \bold{C} \to \bold{D}$ is symmetric lax monoidal.
\end{proof}

\begin{defn}
Let $f \colon \textbf{C} \to \textbf{E}$ and $g \colon \textbf{D} \to \textbf{E}$ be functors with a common codomain. Then their $\textbf{comma category}$ is the category $(f/g)$ whose
\begin{enumerate}
\item{Objects are triples $(c,d,\alpha)$ where $c \in \textbf{C}, d \in \textbf{D}$ and $\alpha \colon f(c) \to g(d)$ is a morphism in $\textbf{E}$, and whose}
\item{Morphisms from $(c_{1},d_{1},\alpha_{1})$ to $(c_{2},d_{2},\alpha_{2})$ are pairs $(\beta, \gamma)$ where $\beta \colon c_{1} \to c_{2}$ and $\gamma \colon d_{1} \to d_{2}$ are morphisms in $\textbf{C}$ and $\textbf{D}$, respectively, such that the following diagram commutes}
\end{enumerate}

\[
\begin{tikzpicture}[scale=1.5]
\node (A) at (1,1) {$f(c_{1})$};
\node (B) at (2,1) {$f(c_{2})$};
\node (A') at (1,0) {$g(d_{1})$};
\node (B') at (2,0) {$g(d_{2})$};
\path[->,font=\scriptsize,>=angle 90]
(A) edge node[above]{$f(\beta)$} (B)
(A') edge node[above]{$g(\gamma)$} (B')
(A) edge node[left]{$\alpha_{1}$} (A')
(B) edge node[right]{$\alpha_{2}$} (B');
\end{tikzpicture}
\]
\end{defn}

%We denote the 2-category of double categories by \cDbl.\  If $F_{1} \colon \lC \to \lE$ and $F_{2} \colon \lD \to \lE$ are double %functors, Lack shows \cite{Lack} that the comma object $(F_{1}/F_{2})$ exists in \cDbl\ if the functor $F_{1}$ is pseudo. With this %in mind, we have the following definition.

%\begin{defn}
%Let $\lC = (\textbf{C}_{0},\textbf{C}_{1}), \lD = (\textbf{D}_{0}, \textbf{D}_{1})$ and $\lE = (\textbf{E}_{0}, \textbf{E}_{1})$ %be double categories where $\textbf{C}_{0}, \textbf{C}_{1}$ is the category of objects and category of arrows of the double %category $\lC$, respectively, and similary for $\textbf{D}_{0}, \textbf{D}_{1}, \textbf{E}_{0}$ and $\textbf{E}_{1}$. Let $F_{1} %\colon \lC \to \lE$ be a pseudo double functor and $F_{2} \colon \lD \to \lE$ be a lax double functor. The $\textbf{comma pseudo %double category}$ $(F_{1} / F_{2})$ is a double category consisting of a category of objects $\textbf{A}_{0}$ and category of %arrows $\textbf{A}_{1}$ such that $\textbf{A}_{0}$ is the comma category obtained from $F_{1} \colon \textbf{C}_{0} \to %%%\textbf{E}_{0}$ and $F_{2} \colon \textbf{D}_{0} \to \textbf{E}_{0}$ and $\textbf{A}_{1}$ is the comma category obtained from %$F_{1} \colon \textbf{C}_{1} \to \textbf{E}_{1}$ and $F_{2} \colon \textbf{D}_{1} \to \textbf{E}_{1}$. 

%Here, $\textbf{A}_{0}$ and $\textbf{A}_{1}$ are the category of objects and arrows of the double category $(F_{1}/ F_{2})%$, %respectively.
%\end{defn}

\begin{prop}
Let $F_{1} \colon \lC \to \lE$ be an oplax double functor and $F_{2} \colon \lD \to \lE$ be a lax double functor where $\lC = (\bold{C}_{0},\bold{C}_{1}), \lD = (\bold{D}_{0}, \bold{D}_{1})$ and $\lE = (\bold{E}_{0}, \bold{E}_{1})$ are pseudo  double categories with $\bold{C}_{0}, \bold{C}_{1}$ the category of objects and category of arrows of the double category $\lC$, respectively, and similary for $\bold{D}_{0}, \bold{D}_{1}, \bold{E}_{0}$ and $\bold{E}_{1}$. Then there is a pseudo double category $(F_{1} / F_{2})$ consisting of a category of objects $\bold{A}_{0}$ and category of arrows $\bold{A}_{1}$ such that $\bold{A}_{0}$ is the comma category obtained from $F_{1} \colon \bold{C}_{0} \to \bold{E}_{0}$ and $F_{2} \colon \bold{D}_{0} \to \bold{E}_{0}$ and $\bold{A}_{1}$ is the comma category obtained from $F_{1} \colon \bold{C}_{1} \to \bold{E}_{1}$ and $F_{2} \colon \bold{D}_{1} \to \bold{E}_{1}$. We call $(F_{1}/F_{2})$ a \textnormal{\textbf{pseudo comma double category}}.
\end{prop}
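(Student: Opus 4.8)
The plan is to build $(F_{1}/F_{2})$ by hand, assembling its data componentwise from $\lC$, $\lD$ and $\lE$ and gluing the comma-square information together with the unit and composition comparison cells of the two (op)lax double functors. First I would observe that $\bold{A}_{0}$ and $\bold{A}_{1}$, being comma categories of ordinary functors, are genuine categories: an object of $\bold{A}_{0}$ is a triple $(c,d,\alpha)$ with $\alpha\colon F_{1}(c)\to F_{2}(d)$ a vertical $1$-morphism of $\lE$, and an object of $\bold{A}_{1}$ is a triple $(M,N,\psi)$ with $\psi\colon F_{1}(M)\to F_{2}(N)$ a $2$-morphism of $\lE$. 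Because a double functor commutes strictly with the source and target functors, $S(\psi)$ and $T(\psi)$ are again comma-square data over $F_{1},F_{2}$ at the level of $\bold{E}_{0}$, so $S,T\colon\bold{A}_{1}\to\bold{A}_{0}$ may be defined componentwise, and the equations $S(U_{A})=A=T(U_{A})$, $S(M\odot N)=SN$, $T(M\odot N)=TM$ follow at once from the corresponding facts in $\lC,\lD,\lE$. For the unit functor $U\colon\bold{A}_{0}\to\bold{A}_{1}$ the variances are exactly right: on $(c,d,\alpha)$ the oplax structure of $F_{1}$ supplies a globular cell $F_{1}(U_{c})\to U_{F_{1}(c)}$, the cell $U_{\alpha}$ supplies $U_{F_{1}(c)}\to U_{F_{2}(d)}$, and the lax structure of $F_{2}$ supplies $U_{F_{2}(d)}\to F_{2}(U_{d})$; their composite is the structure cell of $U_{(c,d,\alpha)}:=(U_{c},U_{d},-)$. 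Since the outer cells are globular, $SU=\id=TU$, and functoriality of $U$ follows from naturality of the unit comparisons.

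The heart of the construction is horizontal composition $\odot\colon\bold{A}_{1}\times_{\bold{A}_{0}}\bold{A}_{1}\to\bold{A}_{1}$. On a composable pair $(M,N,\psi),(M',N',\psi')$ I put $(M\odot M',\,N\odot N',\,\psi'')$, where $\psi''$ is the pasting
\[
F_{1}(M\odot M')\;\longrightarrow\;F_{1}(M)\odot F_{1}(M')\;\xrightarrow{\;\psi\odot\psi'\;}\;F_{2}(N)\odot F_{2}(N')\;\longrightarrow\;F_{2}(N\odot N')
\]
of the oplax composition comparison of $F_{1}$, the horizontal composite of $\psi$ and $\psi'$ in $\lE$ (defined precisely because the two arrows are composable in $\bold{A}_{1}$), and the lax composition comparison of $F_{2}$. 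The same formula on $2$-morphisms makes $\odot$ a functor, using functoriality of $\odot$ in $\lE$ and the naturality clause (item~(i) of the lax double functor definition) for the two comparisons; the boundary equations needed for $\psi''$ to exhibit a horizontal $1$-cell with the prescribed source and target hold because the comparison cells are globular.

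Finally I would supply the constraint isomorphisms $\alpha,\lambda,\rho$ for $\bold{A}_{1}$ componentwise: the associator cell at a composable triple is the triple of the associators of $\lC$, $\lD$ and $\lE$, and similarly for the left and right unitors. The one substantive point is that such a triple is actually a morphism in the comma category $\bold{A}_{1}$, i.e.\ that the square of structure cells it must fill commutes; unwinding the definition of $\psi''$, this square decomposes into three stacked commuting layers -- the oplax coherence hexagon for $F_{1}$ (the dual of item~(iii) of the lax double functor definition), an instance of the interchange law of $\lE$ together with naturality of the associator of $\lE$, and the lax coherence hexagon for $F_{2}$ -- and similarly the two unitor squares reduce to the unit-coherence diagrams of item~(ii). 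Once this is in place, each constraint cell has identity source and target (inherited from $\lE$), and the pentagon and triangle identities for $\bold{A}_{1}$ hold because they hold in $\lC$, $\lD$ and $\lE$ separately. Hence $(F_{1}/F_{2})$ is a pseudo double category, whose categories of objects and of arrows are the asserted comma categories.

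The step I expect to be the main obstacle is exactly this last coherence check: verifying that the componentwise associator and unitor triples respect the comma-square conditions, which is the only place the coherence axioms of an oplax, resp.\ lax, double functor are genuinely used -- everything else is bookkeeping flowing from the fact that $\lC,\lD,\lE$ are pseudo double categories and $F_{1},F_{2}$ strictly preserve source and target. Care is needed in matching up the boundaries of the three layers (the two comparison hexagons and the interchange layer) so that they paste, and in checking that $\odot$ on $\bold{A}_{1}$ is functorial in both variables simultaneously, which again requires the interchange law in $\lE$.
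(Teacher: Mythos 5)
Your proposal is correct and follows essentially the same route as the paper: the central construction in both is the pasting $F_{1}(M\odot M')\to F_{1}(M)\odot F_{1}(M')\to F_{2}(N)\odot F_{2}(N')\to F_{2}(N\odot N')$ built from the oplax comparison of $F_{1}$, the horizontal composite in $\lE$, and the lax comparison of $F_{2}$, which is exactly the diagram the paper draws before declaring ``the remaining details are routine.'' You in fact supply more than the paper does, spelling out the unit functor and the coherence checks for the constraint cells that the paper leaves implicit.
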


\begin{proof}
The four different types of data, namely the objects and morphisms in both the category of objects and category of arrows, are obtained as prescribed by the definition of comma category. That these four types of data then fit together to form a pseudo double category then follows as such. Objects are given by triples $(c,d,\alpha)$ where $c \in \textbf{C}_{0}, d \in \textbf{D}_{0}$ and $\alpha \colon F_{1}(c) \to F_{2}(d)$ a morphism in $\textbf{E}_{0}$, and a vertical 1-morphism between two triples $(c_{1},d_{1},\alpha_{1})$ and $(c_{2},d_{2},\alpha_{2})$ are morphism pairs $(\beta,\gamma)$ where $\beta \colon c_{1} \to c_{2}$ and $\gamma \colon d_{1} \to d_{2}$ are morphisms in $\textbf{C}_{0}$ and $\textbf{D}_{0}$, respectively, such that the above square commutes in $\textbf{E}_{0}$. That composition of vertical 1-morphisms is strictly associative follows from composition of vertical 1-morphisms in $\textbf{C}_{0}$ and $\textbf{D}_{0}$ being strictly associative.

Similarly, we have that objects in the category of arrows, which are horizontal 1-cells, are also given as triples and composition of these triples is associatve only up to natural isomorphism. This follows from composition of horizontal 1-cells in $\lC$ and $\lD$ being associative only up to natural isomorphism. Abusing notation, if we have two horizontal 1-cells $(M,M^\prime, \alpha \colon F_{1}(M) \rightarrow F_{2}(M^\prime))$ and $(N,N^\prime, \alpha^\prime \colon F_{1}(N) \rightarrow F_{2}(N^\prime))$ where $\alpha$ and $\alpha^\prime$ are 2-morphisms, then we obtain a 2-morphism $F_{1}(N \odot M) \rightarrow F_{2}(N^\prime \odot M^\prime)$ by considering the following diagram.

\[
\begin{tikzpicture}[scale=1.5]
\node (A) at (1,2) {$F_{1}(c_{1})$};
\node (B) at (3,2) {$F_{1}(c_{2})$};
\node (C) at (5,2) {$F_{1}(c_{3})$};
\node (A') at (1,0) {$F_{2}(d_{1})$};
\node (B') at (3,0) {$F_{2}(d_{2})$};
\node (C') at (5,0) {$F_{2}(d_{3})$};
\node (E) at (3,2.75) {$\Downarrow \psi_{N,M}$};
\node (F) at (3,-0.75) {$\Downarrow \phi_{N^\prime, M^\prime}$};
\node (G) at (2,1) {$\Downarrow \alpha$};
\node (H) at (4,1) {$\Downarrow \alpha^\prime$};
\path[->,font=\scriptsize,>=angle 90]
(A) edge node[above]{$F_{1}(M)$} (B)
(A) edge[bend left=90] node [above]{$F_{1}(N \odot M)$}(C)
(A') edge[bend right=90] node [below]{$F_{2}(N^\prime \odot M^\prime)$}(C')
(A') edge node[above]{$F_{2}(M^\prime)$} (B')
(A) edge node[left]{$$} (A')
(B) edge node[above]{$F_{1}(N)$} (C)
(B') edge node[above]{$F_{2}(N^\prime)$} (C')
(B) edge node[left]{$$} (B')
(C) edge node[right]{$$} (C');
\end{tikzpicture}
\]
This gives us the desired 2-morphism. The remaining details are routine.
\end{proof}

It is worth noting the importance of the functors $F_{1}$ and $F_{2}$ in the above proposition being oplax and lax, respectively. This is precisely what allows the maps $\psi_{N,M} \colon F_{1}(N \odot M) \to F_{1}(N) \odot F_{1}(M)$ and $\phi_{N^\prime,M^\prime} \colon F_{2}(N^\prime) \odot F_{2}(M^\prime) \to F_{2}(N^\prime \odot M^\prime)$ to go in the proper directions.

\begin{thm}\label{Theorem 4.8}
The pseudo comma double category $(E/F^\prime)$ is the symmetric monoidal double category of $F$-decorated cospans in $\bold{C}$, where $E \colon \bold{1} \to \lB \lD$ is the symmetric oplax monoidal double functor that picks out the unit object of $\bold{D}$.
\end{thm}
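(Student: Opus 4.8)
The plan is to prove the statement in two stages: first unwind the definition of the pseudo comma double category $(E/F^\prime)$ to identify its cells with the data of $F$-decorated cospans, then transport the symmetric monoidal structure. For the first stage, observe that $\bold{1}$ and $\lB \lD$ both have a trivial category of objects, whereas the category of objects of $\text{Cospan}(\bold{C})$ is $\bold{C}$. Hence the object-part $\bold{A}_{0}$ of $(E/F^\prime)$ is isomorphic to $\bold{C}$: a $0$-cell of $(E/F^\prime)$ is an object of $\bold{C}$ and a vertical $1$-morphism is a morphism of $\bold{C}$. For the arrow-part $\bold{A}_{1}$, we have $\bold{1}_{1}$ trivial, $(\lB\lD)_{1}=\bold{D}$, and $(\text{Cospan}(\bold{C}))_{1}$ the category of cospans and maps of cospans in $\bold{C}$. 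Thus a horizontal $1$-cell of $(E/F^\prime)$ is a triple consisting of the unique $1$-cell of $\bold{1}$, a cospan $X\to N\leftarrow Y$ in $\bold{C}$, and a morphism $s\colon E(U_{1_{\bold{1}}})\to F^\prime(X\to N\leftarrow Y)$ in $\bold{D}$. Since $E$ picks out the unit object, $E(U_{1_{\bold{1}}})=I$ and $F^\prime(X\to N\leftarrow Y)=F(N)$, so this triple is exactly an $F$-decorated cospan $(X\to N\leftarrow Y,\ s\colon I\to F(N))$. A $2$-morphism of $(E/F^\prime)$ is a pair consisting of the unique $2$-morphism of $\bold{1}$ and a map of cospans with apex component $h\colon N\to N^\prime$, subject to the comma-square condition, which unwinds to $F(h)\circ s=s^\prime$; this is precisely a map of $F$-decorated cospans, and it is globular exactly when the underlying map of cospans fixes $X$ and $Y$, recovering the globular maps of $F$-decorated cospans of Theorem \ref{Theorem 2.2}.

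Next I would check that the structure functors of $(E/F^\prime)$ reproduce the intended operations. The source and target functors return the feet $X$ and $Y$, and the unit $U_{N}$ is the identity cospan on $N$ carrying the decoration supplied by the identity comparison of $F^\prime$. The essential point is horizontal composition: the composite in $(E/F^\prime)$ of $(X\to N\leftarrow Y,\ s)$ with $(Y\to N^\prime\leftarrow Z,\ s^\prime)$ composes the cospans, giving apex the pushout $N+_{Y}N^\prime$, and composes the decorations by applying the relevant oplax comparison of $E$ followed by the composition comparison of $F^\prime$. Since the composition comparison constructed in Proposition \ref{Proposition 4.5} is $F(J_{N,N^\prime})\circ\phi_{N,N^\prime}$ and the oplax comparison of $E$ is the inverse left unitor $\lambda^{-1}\colon I\to I\otimes I$, the composite decoration is
\[ I\xrightarrow{\lambda^{-1}} I\otimes I\xrightarrow{s\otimes s^\prime} F(N)\otimes F(N^\prime)\xrightarrow{\phi_{N,N^\prime}} F(N+N^\prime)\xrightarrow{F(J_{N,N^\prime})} F(N+_{Y}N^\prime), \]
which is exactly Fong's $s^{\prime\prime}$. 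So the horizontal bicategory $H(E/F^\prime)$ is the bicategory $\text{FCospan}(\bold{C})$ of Theorem \ref{Theorem 2.2}.

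For the symmetric monoidal structure I would use the fact that $E$ is symmetric oplax monoidal and $F^\prime$ is symmetric lax monoidal --- the same variance that made the pseudo comma double category well-defined --- so that the structure maps of $E$ and $F^\prime$ combine to equip $(E/F^\prime)$ with a symmetric monoidal structure with componentwise tensor. Concretely, $\bold{A}_{0}\cong\bold{C}$ and $\bold{A}_{1}$ become symmetric monoidal categories as comma categories of symmetric (op)lax monoidal functors, the tensor of two decorated cospans being the coproduct of the underlying cospans together with the combined decoration $I\xrightarrow{\lambda^{-1}} I\otimes I\xrightarrow{s\otimes s^\prime} F(N)\otimes F(N^\prime)\xrightarrow{\phi_{N,N^\prime}} F(N+N^\prime)$; the monoidal unit is the initial object $0$ of $\bold{C}$ with decoration $\epsilon\colon I\to F(0)$; and the globular isomorphisms $\fx$ and $\fu$ are those of $\text{Cospan}(\bold{C})$ from Proposition \ref{Proposition 4.2}, suitably decorated via $\phi$. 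The long list of coherence diagrams in the definition of symmetric monoidal double category then reduces, in each case, to (i) the corresponding diagram for $\text{Cospan}(\bold{C})$, which holds by Proposition \ref{Proposition 4.2}, together with (ii) a diagram expressing that $F$ is symmetric lax monoidal, combined with the universal properties of coproducts and pushouts in $\bold{C}$.

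The main obstacle is step (ii): checking that the interaction of the pushout-based horizontal composition with the coproduct-based tensor, mediated everywhere by the decoration comparisons $\phi$, respects all of the monoidal-double-category axioms. I expect to dispatch each such diagram exactly as in Proposition \ref{Proposition 4.5}: realize it as one horizontal face of a prism whose opposite face is a diagram already known to commute (a coherence diagram for $\text{Cospan}(\bold{C})$, or one witnessing that $F$ is symmetric lax monoidal) and whose lateral faces commute by naturality of $\phi$ and of $F$ and by the universal property of coequalizers. Everything involving only $0$-cells and vertical $1$-morphisms is automatic, since the relevant parts of $\bold{1}$ and $\lB\lD$ are trivial. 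Assembling these verifications shows that $(E/F^\prime)$ is the symmetric monoidal double category of $F$-decorated cospans in $\bold{C}$.
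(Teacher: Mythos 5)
Your proposal is correct and follows essentially the same route as the paper: unwind the comma construction to identify the cells of $(E/F^\prime)$ with $F$-decorated cospans and their maps, then transport the symmetric monoidal structure using the opposite variances of $E$ and $F^\prime$, with the coherence data (in particular the globular isomorphism $\fx$ and the composite decoration $s^{\prime\prime}$) obtained by combining the structure of $\text{Cospan}(\bold{C})$ from Proposition \ref{Proposition 4.2} with the lax monoidal comparisons of $F$ as in Proposition \ref{Proposition 4.5}. The paper's proof spells out the tensor and composite decorations in explicit diagrams and leaves the remaining coherence checks as ``similar,'' which matches the level of detail in your prism-argument sketch.
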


\begin{proof}
We will verify the definition of the pseudo comma double category $(E/F^\prime)$, as $\bold{1}, \lB \lD$ and Cospan$(\bold{C})$ are symmetric monoidal double categories and we wish to show that the comma category $(E/F^\prime)$ is also a symmetric monoidal double category. Objects are given by triples $(*,c,\id_{ \{*\}})$ as $\bold{1}$ only has one object $\{*\}$ and $\lB \lD$ only has the identity on $\{*\}$ for vertical 1-morphisms, and so this triple is really just an object of $\bold{C}$ due to the triviality of the structure of $\{*\}$ and $\id_{ \{* \} }$. 
Vertical 1-morphisms of $(E/F^\prime)$ are given by pairs $(\id_{1},f)$ where $f$ is a morphism in $\bold{C}$, and hence vertical 1-morphsims are morphisms in $\bold{C}$. This gives us the objects and morphisms of the category of objects of $(E/F^\prime)$. 

For the category of arrows of $(E/F^\prime)$, objects are given by triples $(\id_{\bold{1}},a \to c \leftarrow b, f \colon I \to F(c))$ since $\bold{1}$ only has an identity for horizontal 1-cells and $F^\prime$ acts as $F$ on horizontal 1-cells. Thus objects in the category of arrows of $(E/F^\prime)$, which are horizontal 1-cells, are $F$-decorated cospans in $\bold{C}$. Morphisms in the category of arrows of $(E/F^\prime)$, which are the same as 2-morphisms of $(E/F^\prime)$, are pairs $(\id_{I},(f,h,g))$ such that following diagrams commute:

\[
\begin{tikzpicture}[scale=1.5]
\node (A) at (1,1) {$a$};
\node (B) at (2,1) {$c$};
\node (C) at (3,1) {$b$};
\node (A') at (1,0) {$a'$};
\node (B') at (2,0) {$c'$};
\node (C') at (3,0) {$b'$};
\node (D) at (4,0.5) {$\bold{1}$};
\node (E) at (5,0) {$F(c')$};
\node (F) at (5,1) {$F(c)$};
\path[->,font=\scriptsize,>=angle 90]
(A) edge node[above]{$$} (B)
(A') edge node[above]{$$} (B')
(A) edge node[left]{$f$} (A')
(C) edge node[above]{$$} (B)
(C') edge node[above]{$$} (B')
(B) edge node[left]{$h$} (B')
(C) edge node[right]{$g$} (C')
(D) edge node[below]{$s_{2}$} (E)
(D) edge node[above]{$s_{1}$} (F)
(F) edge node[right]{$F(h)$} (E);
\end{tikzpicture}
\]

These are precisely maps between apices of $F$-decorated cospans in $\bold{C}$. Thus we have that objects of $(E/F^\prime)$ are given by objects of $\bold{C}$, vertical 1-morphisms are given by morphisms of $\bold{C}$, horizontal 1-cells are given by $F$-decorated cospans in $\bold{C}$ and 2-morphisms are given by maps between $F$-decorated cospans in $\bold{C}$ such that the above two diagrams commute. That these four pieces of data fit together to form a pseudo double category follows readily from the definition. Moreover, we have that Cospan$(\bold{C})$ and $\lB \lD$ are symmetric monoidal pseudo double categories by Proposition \ref{Proposition 4.2} and Proposition \ref{Proposition 4.4}, respectively, and that the lax double functor $F^\prime$ is symmetric lax monoidal by Proposition \ref{Proposition 4.5}. As $\bold{1}$ is trivially a symmetric monoidal pseudo double category and the functor $E$ is trivially a symmetric monoidal oplax double functor, it follows by definition that the pseudo comma double category $(E/F^\prime)$ is also symmetric monoidal. There are a fair number of commuting diagrams to check, many of which use the globular morphism $\fx\maps (M_1\ten N_1)\odot (M_2\ten N_2)\too[\sim] (M_1\odot M_2)\ten (N_1\odot N_2)$ between horizontal 1-cells which we will show how to obtain.

Let $M_{1},M_{2},N_{1}$ and $N_{2}$ be horizontal 1-cells given by decorated cospans
\[
\begin{tikzpicture}[scale=1.5]
\node (C') at (-1,0.5) {$M_{1}=$};
\node (A) at (0,0) {$a_{1}$};
\node (B) at (1,1) {$c_{1}$};
\node (C) at (2,0) {$b_{1}$};
\node (A') at (3,0.5) {$\bold{1}$};
\node (B') at (4,0.5) {$F(c_{1})$};
\path[->,font=\scriptsize,>=angle 90]
(A) edge node[above]{$$} (B)
(A') edge node[above]{$s_{c_{1}}$} (B')
(C) edge node[above]{$$} (B);
\end{tikzpicture}
\]
\[
\begin{tikzpicture}[scale=1.5]
\node (C') at (-1,0.5) {$M_{2}=$};
\node (A) at (0,0) {$b_{1}$};
\node (B) at (1,1) {$d_{1}$};
\node (C) at (2,0) {$e_{1}$};
\node (A') at (3,0.5) {$\bold{1}$};
\node (B') at (4,0.5) {$F(d_{1})$};
\path[->,font=\scriptsize,>=angle 90]
(A) edge node[above]{$$} (B)
(A') edge node[above]{$s_{d_{1}}$} (B')
(C) edge node[above]{$$} (B);
\end{tikzpicture}
\]
\[
\begin{tikzpicture}[scale=1.5]
\node (C') at (-1,0.5) {$N_{1}=$};
\node (A) at (0,0) {$a_{2}$};
\node (B) at (1,1) {$c_{2}$};
\node (C) at (2,0) {$b_{2}$};
\node (A') at (3,0.5) {$\bold{1}$};
\node (B') at (4,0.5) {$F(c_{2})$};
\path[->,font=\scriptsize,>=angle 90]
(A) edge node[above]{$$} (B)
(A') edge node[above]{$s_{c_{2}}$} (B')
(C) edge node[above]{$$} (B);
\end{tikzpicture}
\]
and
\[
\begin{tikzpicture}[scale=1.5]
\node (C') at (-1,0.5) {$N_{2}=$};
\node (A) at (0,0) {$b_{2}$};
\node (B) at (1,1) {$d_{2}$};
\node (C) at (2,0) {$e_{2}$};
\node (A') at (3,0.5) {$\bold{1}$};
\node (B') at (4,0.5) {$F(d_{2})$};
\path[->,font=\scriptsize,>=angle 90]
(A) edge node[above]{$$} (B)
(A') edge node[above]{$s_{d_{2}}$} (B')
(C) edge node[above]{$$} (B);
\end{tikzpicture}
.\]
Then we have that $M_{1} \otimes N_{1},M_{2} \otimes N_{2},M_{1} \odot M_{2}$ and $N_{1} \odot N_{2}$ are given by
\[
\begin{tikzpicture}[scale=1.5]
\node (C') at (-1,0.5) {$M_{1} \otimes N_{1}=$};
\node (A) at (0,0) {$a_{1}+a_{2}$};
\node (B) at (1,1) {$c_{1}+c_{2}$};
\node (C) at (2,0) {$b_{1}+b_{2}$};
\node (A') at (-2,-1) {$\bold{1}$};
\node (B') at (-1,-1) {$\bold{1} \times \bold{1}$};
\node (D) at (1,-1) {$F(c_{1}) \times F(c_{2})$};
\node (E) at (3,-1) {$F(c_{1}+c_{2})$};
\path[->,font=\scriptsize,>=angle 90]
(A) edge node[above]{$$} (B)
(A') edge node[above]{$\ell^{-1}$} (B')
(B') edge node[above]{$s_{c_{1}} \times s_{c_{2}}$} (D)
(D) edge node[above]{$\phi_{c_{1},c_{2}}$} (E)
(C) edge node[above]{$$} (B);
\end{tikzpicture}
\]
\[
\begin{tikzpicture}[scale=1.5]
\node (C') at (-1,0.5) {$M_{2} \otimes N_{2}=$};
\node (A) at (0,0) {$b_{1}+b_{2}$};
\node (B) at (1,1) {$d_{1}+d_{2}$};
\node (C) at (2,0) {$e_{1}+e_{2}$};
\node (A') at (-2,-1) {$\bold{1}$};
\node (B') at (-1,-1) {$\bold{1} \times \bold{1}$};
\node (D) at (1,-1) {$F(d_{1}) \times F(d_{2})$};
\node (E) at (3,-1) {$F(d_{1}+d_{2})$};
\path[->,font=\scriptsize,>=angle 90]
(A) edge node[above]{$$} (B)
(A') edge node[above]{$\ell^{-1}$} (B')
(B') edge node[above]{$s_{d_{1}} \times s_{d_{2}}$} (D)
(D) edge node[above]{$\phi_{d_{1},d_{2}}$} (E)
(C) edge node[above]{$$} (B);
\end{tikzpicture}
\]
\[
\begin{tikzpicture}[scale=1.5]
\node (C') at (-1,0.5) {$M_{1} \odot M_{2}=$};
\node (A) at (0,0) {$a_{1}$};
\node (B) at (1,1) {$c_{1}+_{b_{1}}d_{1}$};
\node (C) at (2,0) {$e_{1}$};
\node (A') at (-2.5,-1) {$\bold{1}$};
\node (B') at (-1.5,-1) {$\bold{1} \times \bold{1}$};
\node (D) at (0.5,-1) {$F(c_{1}) \times F(d_{1})$};
\node (E) at (2.5,-1) {$F(c_{1}+d_{1})$};
\node (F) at (4.5,-1) {$F(c_{1}+_{b_{1}}d_{1})$};
\path[->,font=\scriptsize,>=angle 90]
(A) edge node[above]{$$} (B)
(A') edge node[above]{$\ell^{-1}$} (B')
(B') edge node[above]{$s_{c_{1}} \times s_{d_{1}}$} (D)
(D) edge node[above]{$\phi_{c_{1},d_{1}}$} (E)
(E) edge node[above]{$F(j_{c_{1},d_{1}})$} (F)
(C) edge node[above]{$$} (B);
\end{tikzpicture}
\]
and
\[
\begin{tikzpicture}[scale=1.5]
\node (C') at (-1,0.5) {$N_{1} \odot N_{2}=$};
\node (A) at (0,0) {$a_{2}$};
\node (B) at (1,1) {$c_{2}+_{b_{2}}d_{2}$};
\node (C) at (2,0) {$e_{2}$};
\node (A') at (-2.5,-1) {$\bold{1}$};
\node (B') at (-1.5,-1) {$\bold{1} \times \bold{1}$};
\node (D) at (0.5,-1) {$F(c_{2}) \times F(d_{2})$};
\node (E) at (2.5,-1) {$F(c_{2}+d_{2})$};
\node (F) at (4.5,-1) {$F(c_{2}+_{b_{2}}d_{2})$};
\path[->,font=\scriptsize,>=angle 90]
(A) edge node[above]{$$} (B)
(A') edge node[above]{$\ell^{-1}$} (B')
(B') edge node[above]{$s_{c_{2}} \times s_{d_{2}}$} (D)
(D) edge node[above]{$\phi_{c_{2},d_{2}}$} (E)
(E) edge node[above]{$F(j_{c_{2},d_{2}})$} (F)
(C) edge node[above]{$$} (B);
\end{tikzpicture}
\]
so then $(M_{1} \otimes N_{1}) \odot (M_{2} \otimes N_{2})$ and $(M_{1} \odot M_{2}) \otimes (N_{1} \odot N_{2})$ are given by
\[
\begin{tikzpicture}[scale=1.5]
\node (C') at (-2,0.5) {$(M_{1} \otimes N_{1}) \odot (M_{2} \otimes N_{2})=$};
\node (A) at (0,0) {$a_{1}+a_{2}$};
\node (B) at (1,1) {$(c_{1}+c_{2})+_{b_{1}+b_{2}}(d_{1}+d_{2})$};
\node (C) at (2,0) {$e_{1}+e_{2}$};
\node (A') at (-2.5,-1) {$\bold{1}$};
\node (D) at (-0.5,-1) {$F(c_{1}+c_{2}) \times F(d_{1}+d_{2})$};
\node (E) at (2.5,-1) {$F((c_{1}+c_{2})+(d_{1}+d_{2}))$};
\node (F) at (0.5,-2) {$F((c_{1}+c_{2})+_{b_{1}+b_{2}}(d_{1}+d_{2}))$};
\path[->,font=\scriptsize,>=angle 90]
(A) edge node[above]{$$} (B)
(A') edge node[above]{$\psi$} (D)
(D) edge node[above]{$\phi$} (E)
(E) edge node[below]{$F(j)$} (F)
(C) edge node[above]{$$} (B);
\end{tikzpicture}
\]
where $$\psi=(\phi_{c_{1},c_{2}} \circ (s_{c_{1}} \times s_{c_{2}}) \circ \ell^{-1}) \times (\phi_{d_{1},d_{2}} \circ (s_{d_{1}} \times s_{d_{2}}) \circ \ell^{-1})$$ is the product of the maps given above, and $$F(j)=F(j_{c_{1}+c_{2},d_{1}+d_{2}})$$ and
\[
\begin{tikzpicture}[scale=1.5]
\node (C') at (-2,0.5) {$(M_{1} \odot M_{2}) \otimes (N_{1} \odot N_{2})=$};
\node (A) at (0,0) {$a_{1}+a_{2}$};
\node (B) at (1,1) {$(c_{1}+_{b_{1}}d_{1})+(c_{2}+_{b_{2}}d_{2})$};
\node (C) at (2,0) {$e_{1}+e_{2}$};
\node (A') at (-3,-1) {$\bold{1}$};
\node (D) at (-1,-1) {$F(c_{1}+_{b_{1}}d_{1}) \times F(c_{2}+_{b_{2}}d_{2})$};
\node (E) at (3,-1) {$F((c_{1}+_{b_{1}}d_{1})+(c_{2}+_{b_{2}}d_{2}))$};
\path[->,font=\scriptsize,>=angle 90]
(A) edge node[above]{$$} (B)
(A') edge node[above]{$\theta$} (D)
(D) edge node[above]{$\phi_{c_{1}+_{b_{1}}d_{1},c_{2}+_{b_{2}}d_{2}}$} (E)
(C) edge node[above]{$$} (B);
\end{tikzpicture}
\]
where $$\theta = (F(j_{c_{1},d_{1}}) \circ \phi_{c_{1},d_{1}} \circ (s_{c_{1}} \times s_{d_{1}}) \circ \ell^{-1}) \times (F(j_{c_{2},d_{2}}) \circ \phi_{c_{2},d_{2}} \circ (s_{c_{2}} \times s_{d_{2}}) \circ \ell^{-1}).$$
The globular morphism 
 \[\fx\maps (M_1\ten N_1)\odot (M_2\ten N_2)\too[\sim] (M_1\odot M_2)\ten (N_1\odot N_2)\]
is then given by the universal map $\alpha \colon (c_{1}+c_{2})+_{b_{1}+b_{2}}(d_{1}+d_{2}) \to (c_{1}+_{b_{1}}d_{1})+(c_{2}+_{b_{2}}d_{2})$ given by Proposition \ref{Proposition 4.2} that makes
\[
\begin{tikzpicture}[scale=1.5]
\node (A) at (1,1) {$a_{1}+a_{2}$};
\node (B) at (3,1) {$(c_{1}+c_{2})+_{b_{1}+b_{2}}(d_{1}+d_{2})$};
\node (C) at (5,1) {$e_{1}+e_{2}$};
\node (A') at (1,0) {$a_{1}+a_{2}$};
\node (B') at (3,0) {$(c_{1}+_{b_{1}}d_{1})+(c_{2}+_{b_{2}}d_{2})$};
\node (C') at (5,0) {$e_{1}+e_{2}$};
\path[->,font=\scriptsize,>=angle 90]
(A) edge node[above]{$$} (B)
(A') edge node[above]{$$} (B')
(A) edge node[left]{$$} (A')
(C) edge node[above]{$$} (B)
(C') edge node[above]{$$} (B')
(B) edge node[left]{$\alpha$} (B')
(C) edge node[right]{$$} (C');
\end{tikzpicture}
\] 
commute. This universal map $\alpha$ also gives us the map $F(\alpha)$ which makes
\[
\begin{tikzpicture}[scale=1.5]
\node (D) at (2,0.5) {$\bold{1}$};
\node (E) at (5,0) {$F((c_{1}+c_{2})+_{b_{1}+b_{2}}(d_{1}+d_{2}))$};
\node (F) at (5,1) {$F((c_{1}+_{b_{1}}d_{1})+(c_{2}+_{b_{2}}d_{2}))$};
\path[->,font=\scriptsize,>=angle 90]
(D) edge node[below]{$$} (E)
(D) edge node[above]{$$} (F)
(F) edge node[right]{$F(\alpha)$} (E);
\end{tikzpicture}
\]
commute. The remaining details are similar to those given here.
\end{proof}

%An alternative and more sophisticated method of proof is given as follows. Denote by $\cDbl$ the 2-category of pseudo double %categories, pseudo functors and transformations, and by $\textbf{Graph}(\textbf{Cat})$ the 2-category of graphs internal to $%%\textbf{Cat}$, graph morphisms internal to $\textbf{Cat}$ and transformations. Then $\cDbl$ is monadic over $\textbf{Graph}%%%(\textbf{Cat})$ and as the functor $I \colon \bold{1} \to \lB \lD$ constructed in Theorem \ref{Theorem 4.8} is in fact a %%%%%%\emph{strict} double functor, $(I/F^\prime)$ exists as a comma object in $\cDbl$ by a result of Lack \cite{Lack}.

A more sophisticated method of proof uses the theory of 2-monads. Let $\textbf{Graph}(\textbf{Cat})$ be the 2-category of graphs internal to $\textbf{Cat}$, graph morphisms internal to $\textbf{Cat}$, and transformations between these. There is a 2-monad $T$ on $\textbf{Graph}(\textbf{Cat})$ whose strict algebras are pseudo double categories. The strict (resp.\ pseudo, lax) morphisms between these algebras are strict (resp.\ pseudo, lax) double functors. There is thus a 2-category $T\textbf{Alg}_\ell$ consisting of pseudo double categories, lax double functors and transformations. As the oplax double functor $E \colon \bold{1} \to \lB \lD$ constructed in Theorem \ref{Theorem 4.8} is in fact \emph{strict}, a result of Lack \cite[Prop.\ 4.6]{Lack} implies that $(E/F^\prime)$ exists as a comma object in $T\textbf{Alg}_\ell$.

\begin{prop}\label{Proposition 4.9}
The symmetric monoidal double category $(E/F^\prime)$ is fibrant.
\end{prop}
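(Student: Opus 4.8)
The plan is to verify fibrancy directly, transporting the (routine) fibrancy of $\text{Cospan}(\bold{C})$ through the comma construction. Recall from the proof of Theorem \ref{Theorem 2.2} that $\text{Cospan}(\bold{C})$ is fibrant: for a morphism $f\colon A\to B$ of $\bold{C}$ the companion is the cospan $A\xrightarrow{f}B\xleftarrow{1_B}B$, the conjoint is $B\xrightarrow{1_B}B\xleftarrow{f}A$, and the four structure $2$-morphisms (two for the companion, two for the conjoint) are the maps of cospans whose apex-components are $1_B$ or $f$, as dictated by Definition \ref{def:companion}. By the analysis in Theorem \ref{Theorem 4.8}, the objects of $(E/F^\prime)$ are objects of $\bold{C}$, its vertical $1$-morphisms are morphisms of $\bold{C}$, its horizontal $1$-cells are $F$-decorated cospans $(X\to N\leftarrow Y,\ s\colon I\to F(N))$, and a $2$-morphism from $(X\to N\leftarrow Y,s)$ to $(X'\to N'\leftarrow Y',s')$ is a pair $(\id_I,h)$ in which $h$ is a map of cospans and the sole decoration constraint $F(h)\circ s=s'$ holds, where $F(h)$ denotes $F$ applied to the apex-component of $h$; in particular a $2$-morphism is determined by its map-of-cospans component.

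Given a vertical $1$-morphism $f\colon A\to B$, let the companion $\fhat\colon A\hto B$ be the Cospan-companion $A\xrightarrow{f}B\xleftarrow{1_B}B$ carrying the decoration $u_B\colon I\to F(B)$ that the horizontal unit $U_B$ of $(E/F^\prime)$ carries on its apex $B$ --- that is, the identity comparison $\phi^{\mathrm{id}}_B$ of the lax double functor $F^\prime$ at $B$, which unwinds to $F(!_B)\circ e$ for $!_B\colon 0\to B$ the unique morphism and $e\colon I\to F(0)$ the unit comparison of $F$ from Proposition \ref{Proposition 4.5}. The two companion $2$-morphisms are then carried by the two Cospan-companion maps of cospans: the one with apex-component $1_B$ imposes $F(1_B)\circ s_{\fhat}=u_B$, which forces $s_{\fhat}=u_B$ and then holds automatically; the one with apex-component $f$ imposes $F(f)\circ u_A=u_B$, which is precisely the naturality of the identity comparison $\phi^{\mathrm{id}}$ of $F^\prime$ (equivalently $F(f)\circ F(!_A)\circ e=F(!_B)\circ e$, using $f\circ{!_A}={!_B}$). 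Since a $2$-morphism of $(E/F^\prime)$ is pinned down by its map-of-cospans component, the two coherence equations of Definition \ref{def:companion} for $\fhat$ reduce to the companion equations already satisfied in $\text{Cospan}(\bold{C})$. The conjoint $\fchk\colon B\hto A$ is obtained dually: equip the Cospan-conjoint $B\xrightarrow{1_B}B\xleftarrow{f}A$ with the same apex decoration $u_B$ and run the same argument in $\lD^{h\cdot\mathrm{op}}$.

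The only genuine work lies in the middle paragraph: confirming that the decoration constraint on $2$-morphisms of the comma double category forces the companion and conjoint decorations to be $u_B$, and that the one remaining constraint is exactly the naturality of $F^\prime$'s identity comparison (which is part of $F^\prime$ being a lax double functor, established in Proposition \ref{Proposition 4.5}). Once this is settled, fibrancy of $(E/F^\prime)$ is a formal consequence of fibrancy of $\text{Cospan}(\bold{C})$, the $\bold{1}$-component of every piece of data being trivial.
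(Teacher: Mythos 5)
Your proof is correct and takes essentially the same route as the paper: lift the companion and conjoint cospans from $\text{Cospan}(\bold{C})$, equip them with the trivial decoration (which you rightly identify as $u_B = F(!_B)\circ e$), and observe that the coherence equations reduce to those already holding in $\text{Cospan}(\bold{C})$ because a $2$-morphism of $(E/F^\prime)$ is determined by its underlying map of cospans. You in fact supply more detail than the paper, whose proof simply asserts that the trivial decoration satisfies the required equations; your check that the constraint $F(f)\circ u_A = u_B$ follows from $f\circ{!_A}={!_B}$ is exactly the point being glossed over there.
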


\begin{proof}
We have that $\text{Cospan}(\textbf{C})$ is fibrant; a companion of a horizointal 1-cell $f \colon A \to B$ is the cospan $f \colon A \to B \leftarrow B \colon \id_{B}$ with corresponding conjoint $\id_{B} \colon B \to B \leftarrow A \colon f$ and where $U_{A}$ is the identity cospan $\id_{A} \colon A \to A \leftarrow A \colon \id_{A}$ on the object $A$. It then follows that $(E/F^\prime)$ is also fibrant by choosing the trivial decoration for all of the above cospans which will then satisfy the required equations in the definition of fibrant, as these equations simply become the equations required to be satisfied for $\text{Cospan}(\textbf{C})$ to be fibrant.
\end{proof}

%\begin{thm}
%$H(I/F^\prime)=\textnormal{FCospan}(\bold{C})$ is a symmetric monoidal bicategory.
%\end{thm}

%\begin{proof}
%This follows immediately from the result of Shulman.
%\end{proof}

\section{Applications}\label{Section 5}
%In this last section, we present a few examples of the main result, one of which was alluded to in the introduction. The following %example of electrical circuits is due to Rosebrugh, Sabadini and Walters in \cite{RSW} and is seen in \cite{BCR}.

In this last section, we present two applications of the main result. The first involves the symmetric monoidal categories studied by Rosebrugh, Sabadini and Walters \cite{RSW}, in which a morphism is a directed graph with labeled edges and specified input and output nodes.  We can promote these categories to symmetric monoidal bicategories.  First, we make the following definitions:

\begin{defn}
A \textbf{graph} is a finite set $E$ of \textbf{edges} and a finite set $N$ \textbf{nodes} equipped with a pair of functions $s,t \colon E \to N$ that assign to each edge $e \in E$ its source and target, $s(e)$ and $t(e)$, respectively. In this case, we say that $e \in E$ is an edge from $s(e)$ to $t(e)$.
\end{defn}

\begin{defn}
Given a set $L$ of \textbf{labels}, an \define{$L$-graph} is a graph equipped with a function $\ell \colon E \to L$ which assigns a label to each edge.
\end{defn}

For example, if we take $L=(0,\infty)$, an $L$-graph is just a weighted graph, as discussed in the Introduction:

\begin{center}
  \begin{tikzpicture}[auto,scale=2.3]
    \node[circle,draw,inner sep=1pt,fill]         (A) at (0,0) {};
    \node[circle,draw,inner sep=1pt,fill]         (B) at (1,0) {};
    \node[circle,draw,inner sep=1pt,fill]         (C) at (0.5,-.86) {};
    \path (B) edge  [bend right,->-] node[above] {0.2} (A);
    \path (A) edge  [bend right,->-] node[below] {1.3} (B);
    \path (A) edge  [->-] node[left] {0.8} (C);
    \path (C) edge  [->-] node[right] {2.0} (B);
  \end{tikzpicture}
\end{center}
To turn a graph like this into a morphism in a category, we specify input and output nodes using a cospan of finite sets.

\begin{defn}
Given a set $L$ and finite sets $X$ and $Y$, an \define{$L$-circuit from $X$ to $Y$} is a cospan of finite sets
\[
 \begin{aligned}
      \xymatrix{
	& N \\  
	X \ar[ur]^{i} && Y \ar[ul]_{o}
      }
    \end{aligned}
\]
together with an $L$-graph

\[
\begin{tikzpicture}[scale=1.5]
\node (A) at (1,1) {$E$};
\node (B) at (2,1) {$N$};
\node (C) at (0,1) {$L$};
\path[->,font=\scriptsize,>=angle 90]
(A) edge node[above]{$\ell$} (C)
(A) edge[bend left] node[above]{$s$} (B)
(A) edge[bend right] node[below]{$t$} (B);
\end{tikzpicture}
\]
We call the sets $i(X), o(Y)$ and $\del N = i(X) \cup o(Y)$ the \textbf{inputs}, \textbf{outputs} and \textbf{terminals} of the $L$-circuit, respectively.
\end{defn}
We saw an example of an $L$-circuit in the introduction:
\begin{center}
  \begin{tikzpicture}[auto,scale=2.15]
    \node[circle,draw,inner sep=1pt,fill=gray,color=gray]         (x) at (-1.4,-.43) {};
    \node at (-1.4,-.9) {$X$};
    \node[circle,draw,inner sep=1pt,fill]         (A) at (0,0) {};
    \node[circle,draw,inner sep=1pt,fill]         (B) at (1,0) {};
    \node[circle,draw,inner sep=1pt,fill]         (C) at (0.5,-.86) {};
    \node[circle,draw,inner sep=1pt,fill=gray,color=gray]         (y1) at (2.4,-.25) {};
    \node[circle,draw,inner sep=1pt,fill=gray,color=gray]         (y2) at (2.4,-.61) {};
    \node at (2.4,-.9) {$Y$};
    \path (B) edge  [bend right,->-] node[above] {0.2} (A);
    \path (A) edge  [bend right,->-] node[below] {1.3} (B);
    \path (A) edge  [->-] node[left] {0.8} (C);
    \path (C) edge  [->-] node[right] {2.0} (B);
    \path[color=gray, very thick, shorten >=10pt, shorten <=5pt, ->, >=stealth] (x) edge (A);
    \path[color=gray, very thick, shorten >=10pt, shorten <=5pt, ->, >=stealth] (y1) edge (B);
    \path[color=gray, very thick, shorten >=10pt, shorten <=5pt, ->, >=stealth] (y2) edge (B);
  \end{tikzpicture}
\end{center}
Given another $L$-ciruit whose inputs match up with the outputs of the above $L$-circuit:
\begin{center}
  \begin{tikzpicture}[auto,scale=2.15]
    \node[circle,draw,inner sep=1pt,fill=gray,color=gray]         (y1) at (-1.4,-.25) {};
    \node[circle,draw,inner sep=1pt,fill=gray,color=gray]         (y2) at (-1.4,-.61) {};
    \node at (-1.4,-.9) {$Y$};
    \node[circle,draw,inner sep=1pt,fill]         (A) at (0,0) {};
    \node[circle,draw,inner sep=1pt,fill]         (B) at (1,0) {};
    \node[circle,draw,inner sep=1pt,fill]         (C) at (0.5,-.86) {};
    \node[circle,draw,inner sep=1pt,fill=gray,color=gray]         (z1) at (2.4,-.25) {};
    \node[circle,draw,inner sep=1pt,fill=gray,color=gray]         (z2) at (2.4,-.61) {};
    \node at (2.4,-.9) {$Z$};
    \path (A) edge  [->-] node[above] {1.7} (B);
    \path (C) edge  [->-] node[right] {0.3} (B);
    \path[color=gray, very thick, shorten >=10pt, shorten <=5pt, ->, >=stealth] (y1) edge (A);
    \path[color=gray, very thick, shorten >=10pt, shorten <=5pt, ->, >=stealth] (y2)
    edge (C);
    \path[color=gray, very thick, shorten >=10pt, shorten <=5pt, ->, >=stealth] (z1) edge (B);
    \path[color=gray, very thick, shorten >=10pt, shorten <=5pt, ->, >=stealth] (z2) edge (C);
  \end{tikzpicture}
\end{center}
we can compose them by making the following identifications
\begin{center}
  \begin{tikzpicture}[auto,scale=2.15]
    \node[circle,draw,inner sep=1pt,fill=gray,color=gray]         (x) at (-0.8,-.43) {};
    \node at (-0.8,-.7) {$X$};
    \node[circle,draw,inner sep=1pt,fill]         (A) at (0,0) {};
    \node[circle,draw,inner sep=1pt,fill]         (B) at (1,0) {};
    \node[circle,draw,inner sep=1pt,fill]         (C) at (0.5,-.86) {};
    \node[circle,draw,inner sep=1pt,fill=gray,color=gray]         (y1) at (1.8,-.25) {};
    \node[circle,draw,inner sep=1pt,fill=gray,color=gray]         (y2) at (1.8,-.61) {};
    \node at (1.8,-.9) {$Y$};
    \path (B) edge  [bend right,->-] node[above] {0.2} (A);
    \path (A) edge  [bend right,->-] node[below] {1.3} (B);
    \path (A) edge  [->-] node[left] {0.8} (C);
    \path (C) edge  [->-] node[right] {2.0} (B);
    \path[color=gray, very thick, shorten >=10pt, shorten <=5pt, ->, >=stealth] (x) edge (A);
    \path[color=gray, very thick, shorten >=10pt, shorten <=5pt, ->, >=stealth] (y1) edge (B);
    \path[color=gray, very thick, shorten >=10pt, shorten <=5pt, ->, >=stealth] (y2) edge (B);
    \node[circle,draw,inner sep=1pt,fill]         (A') at (2.6,0) {};
    \node[circle,draw,inner sep=1pt,fill]         (B') at (3.6,0) {};
    \node[circle,draw,inner sep=1pt,fill]         (C') at (3.1,-.86) {};
    \node[circle,draw,inner sep=1pt,fill=gray,color=gray]         (z1) at (4.4,-.25) {};
    \node[circle,draw,inner sep=1pt,fill=gray,color=gray]         (z2) at (4.4,-.61) {};
    \node at (4.4,-.9) {$Z$};
    \path (A') edge  [->-] node[above] {1.7} (B');
    \path (C') edge  [->-] node[right] {0.3} (B');
    \path[color=gray, very thick, shorten >=10pt, shorten <=5pt, ->, >=stealth] (y1) edge (A');
    \path[color=gray, very thick, shorten >=10pt, shorten <=5pt, ->, >=stealth] (y2)
    edge (C');
    \path[color=gray, very thick, shorten >=10pt, shorten <=5pt, ->, >=stealth] (z1) edge (B');
    \path[color=gray, very thick, shorten >=10pt, shorten <=5pt, ->, >=stealth] (z2) edge (C');
  \end{tikzpicture}
\end{center}
and obtain the following $L$-circuit:
\begin{center}
  \begin{tikzpicture}[auto,scale=2.15]
    \node[circle,draw,inner sep=1pt,fill=gray,color=gray]         (x) at (-1.4,-.43) {};
    \node at (-1.4,-.9) {$X$};
    \node[circle,draw,inner sep=1pt,fill]         (A) at (0,0) {};
    \node[circle,draw,inner sep=1pt,fill]         (B) at (1,0) {};
    \node[circle,draw,inner sep=1pt,fill]         (C) at (0.5,-.86) {};
    \node[circle,draw,inner sep=1pt,fill]         (D) at (2,0) {};
    \node[circle,draw,inner sep=1pt,fill=gray,color=gray]         (z1) at (3.4,-.25) {};
    \node[circle,draw,inner sep=1pt,fill=gray,color=gray]         (z2) at (3.4,-.61) {};
    \node at (3.4,-.9) {$Z$};
    \path (B) edge  [bend right,->-] node[above] {0.2} (A);
    \path (A) edge  [bend right,->-] node[below] {1.3} (B);
    \path (A) edge  [->-] node[left] {0.8} (C);
    \path (C) edge  [->-] node[right] {2.0} (B);
    \path (B) edge  [bend left,->-] node[above] {1.7} (D);
    \path (B) edge  [bend right,->-] node[below] {0.3} (D);
    \path[color=gray, very thick, shorten >=10pt, shorten <=5pt, ->, >=stealth] (x) edge (A);
    \path[color=gray, very thick, shorten >=10pt, shorten <=5pt, ->, >=stealth] (z1)
    edge (D);
    \path[bend left, color=gray, very thick, shorten >=10pt, shorten <=5pt, ->, >=stealth] (z2)
    edge (B);
  \end{tikzpicture}
\end{center}
Following the result of the paper's main theorem, a 2-morphism will be a globular 2-morphism between cospans of finite sets whose apices are decorated with $L$-graphs. This amounts to a map $h \colon N \to N^\prime$ between the apices such that the decorations of the $L$-graphs are preserved.
\[
\begin{aligned}
      \xymatrix{
	& N \ar[dd]^{h} \\  
	X \ar[ur]^{i} \ar[dr]_{i^\prime} && Y \ar[ul]_{o} \ar[dl]^{o^\prime}\\
          & N'
      }
    \end{aligned},
\begin{aligned}
      \xymatrix{
	& F(N) \ar[dd]^{F(h)} \\
	I \ar[ur]^{r_{1}} \ar[dr]_{r_{2}} \\
           & F(N^\prime)
      }
    \end{aligned}
\]
If we have an $L$-graph
\[
\begin{tikzpicture}[scale=1.5]
\node (A) at (1,1) {$E$};
\node (B) at (2,1) {$N$};
\node (C) at (0,1) {$L$};
\path[->,font=\scriptsize,>=angle 90]
(A) edge node[above]{$\ell$} (C)
(A) edge[bend left] node[above]{$s$} (B)
(A) edge[bend right] node[below]{$t$} (B);
\end{tikzpicture}
\]
which decorates the set $N$, the $L$-graph
\[
\begin{tikzpicture}[scale=1.5]
\node (A) at (1,1) {$E$};
\node (B) at (2,1) {$N^\prime$};
\node (C) at (0,1) {$L$};
\path[->,font=\scriptsize,>=angle 90]
(A) edge node[above]{$\ell$} (C)
(A) edge[bend left] node[above]{$h \circ s$} (B)
(A) edge[bend right] node[below]{$h \circ t$} (B);
\end{tikzpicture}
\]
decorates the set $N^\prime$ and makes the diagram on the right above commute. We can also tensor two $L$-circuits by formally placing them side by side; for example, if we tensor 

\begin{center}
  \begin{tikzpicture}[auto,scale=2.15]
    \node[circle,draw,inner sep=1pt,fill=gray,color=gray]         (x) at (-1.4,-.43) {};
    \node at (-1.4,-.9) {$X$};
    \node[circle,draw,inner sep=1pt,fill]         (A) at (0,0) {};
    \node[circle,draw,inner sep=1pt,fill]         (B) at (1,0) {};
    \node[circle,draw,inner sep=1pt,fill]         (C) at (0.5,-.86) {};
    \node[circle,draw,inner sep=1pt,fill=gray,color=gray]         (y1) at (2.4,-.25) {};
    \node at (2.4,-.9) {$Y$};
    \path (B) edge  [bend right,->-] node[above] {0.2} (A);
    \path (A) edge  [bend right,->-] node[below] {1.3} (B);
    \path (A) edge  [->-] node[left] {0.8} (C);
    \path (C) edge  [->-] node[right] {2.0} (B);
    \path[color=gray, very thick, shorten >=10pt, shorten <=5pt, ->, >=stealth] (x) edge (A);
    \path[color=gray, very thick, shorten >=10pt, shorten <=5pt, ->, >=stealth] (y1) edge (B);
  \end{tikzpicture}
\end{center}
with
\begin{center}
  \begin{tikzpicture}[auto,scale=2.15]
    \node[circle,draw,inner sep=1pt,fill=gray,color=gray]         (y1) at (-1.4,-.25) {};
    \node[circle,draw,inner sep=1pt,fill=gray,color=gray]         (y2) at (-1.4,-.61) {};
    \node at (-1.4,-.9) {$X^\prime$};
    \node[circle,draw,inner sep=1pt,fill]         (A) at (0,0) {};
    \node[circle,draw,inner sep=1pt,fill]         (B) at (1,0) {};
    \node[circle,draw,inner sep=1pt,fill]         (C) at (0.5,-.86) {};
    \node[circle,draw,inner sep=1pt,fill=gray,color=gray]         (z1) at (2.4,-.25) {};
    \node[circle,draw,inner sep=1pt,fill=gray,color=gray]         (z2) at (2.4,-.61) {};
    \node at (2.4,-.9) {$Y^\prime$};
    \path (A) edge  [->-] node[above] {1.7} (B);
    \path (C) edge  [->-] node[right] {0.3} (B);
    \path[color=gray, very thick, shorten >=10pt, shorten <=5pt, ->, >=stealth] (y1) edge (A);
    \path[color=gray, very thick, shorten >=10pt, shorten <=5pt, ->, >=stealth] (y2)
    edge (C);
    \path[color=gray, very thick, shorten >=10pt, shorten <=5pt, ->, >=stealth] (z1) edge (B);
    \path[color=gray, very thick, shorten >=10pt, shorten <=5pt, ->, >=stealth] (z2) edge (C);
  \end{tikzpicture}
\end{center}
we get
\begin{center}
  \begin{tikzpicture}[auto,scale=2.15]
    \node[circle,draw,inner sep=1pt,fill=gray,color=gray]         (x) at (-1.4,-.43) {};
    \node[circle,draw,inner sep=1pt,fill=gray,color=gray]         (w1) at (-1.4,-1) {};
    \node[circle,draw,inner sep=1pt,fill=gray,color=gray]         (w2) at (-1.4,-1.61) {};
    \node at (-1.9,-1) {$X+X^\prime$};
    \node[circle,draw,inner sep=1pt,fill]         (A) at (0,0) {};
    \node[circle,draw,inner sep=1pt,fill]         (B) at (1,0) {};
    \node[circle,draw,inner sep=1pt,fill]         (C) at (0.5,-.86) {};
    \node[circle,draw,inner sep=1pt,fill]         (D) at (0,-1.2) {};
    \node[circle,draw,inner sep=1pt,fill]         (E) at (1,-1.2) {};
    \node[circle,draw,inner sep=1pt,fill]         (F) at (0.5,-2) {};
    \node[circle,draw,inner sep=1pt,fill=gray,color=gray]         (y1) at (2.4,-.25) {};
    \node[circle,draw,inner sep=1pt,fill=gray,color=gray]         (z1) at (2.4,-1) {};
    \node[circle,draw,inner sep=1pt,fill=gray,color=gray]         (z2) at (2.4,-1.61) {};
    \node at (2.9,-1) {$Y+Y^\prime$};
    \path (D) edge  [->-] node[above] {1.7} (E);
    \path (F) edge  [->-] node[right] {0.3} (E);
    \path (B) edge  [bend right,->-] node[above] {0.2} (A);
    \path (A) edge  [bend right,->-] node[below] {1.3} (B);
    \path (A) edge  [->-] node[left] {0.8} (C);
    \path (C) edge  [->-] node[right] {2.0} (B);
    \path[color=gray, very thick, shorten >=10pt, shorten <=5pt, ->, >=stealth] (w1) edge (D);
    \path[color=gray, very thick, shorten >=10pt, shorten <=5pt, ->, >=stealth] (w2)
    edge (F);
    \path[color=gray, very thick, shorten >=10pt, shorten <=5pt, ->, >=stealth] (x) edge (A);
    \path[color=gray, very thick, shorten >=10pt, shorten <=5pt, ->, >=stealth] (y1) edge (B);
   \path[color=gray, very thick, shorten >=10pt, shorten <=5pt, ->, >=stealth] (z1) edge (E);
    \path[color=gray, very thick, shorten >=10pt, shorten <=5pt, ->, >=stealth] (z2) edge (F);
  \end{tikzpicture}
\end{center}
More formally, given two $L$-circuits 

\[
 \begin{aligned}
\begin{tikzpicture}[scale=1.5]
\node (A) at (1,1) {$E$};
\node (B) at (2,1) {$N$};
\node (C) at (0,1) {$L$};
\node (D) at (-1,0.5) {$Y$};
\node (E) at (-2,1.5) {$N$};
\node (F) at (-3,0.5) {$X$};
\path[->,font=\scriptsize,>=angle 90]
(A) edge node[above]{$\ell$} (C)
(A) edge[bend left] node[above]{$s$} (B)
(A) edge[bend right] node[below]{$t$} (B)
(D) edge node [above,right] {$o$} (E)
(F) edge node [above,left]{$i$} (E);
\end{tikzpicture}
\end{aligned}
\]
and
\[
 \begin{aligned}
\begin{tikzpicture}[scale=1.5]
\node (A) at (1,1) {$E^\prime$};
\node (B) at (2,1) {$N^\prime$};
\node (C) at (0,1) {$L$};
\node (D) at (-1,0.5) {$Y^\prime$};
\node (E) at (-2,1.5) {$N^\prime$};
\node (F) at (-3,0.5) {$X^\prime$};
\path[->,font=\scriptsize,>=angle 90]
(A) edge node[above]{$\ell^\prime$} (C)
(A) edge[bend left] node[above]{$s^\prime$} (B)
(A) edge[bend right] node[below]{$t^\prime$} (B)
(D) edge node [above,right] {$o^\prime$} (E)
(F) edge node [above,left]{$i^\prime$} (E);
\end{tikzpicture}
\end{aligned}
\]
their tensor product is
\[
 \begin{aligned}
\begin{tikzpicture}[scale=1.5]
\node (A) at (1,1) {$E+E^\prime$};
\node (B) at (2,1) {$N+N^\prime$};
\node (C) at (0,1) {$L$};
\node (D) at (-1,0.5) {$Y+Y^\prime$};
\node (E) at (-2,1.5) {$N+N^\prime$};
\node (F) at (-3,0.5) {$X+X^\prime$};
\path[->,font=\scriptsize,>=angle 90]
(A) edge node[above]{$(\ell,\ell^\prime)$} (C)
(A) edge[bend left] node[above]{$s+s^\prime$} (B)
(A) edge[bend right] node[below]{$t+t^\prime$} (B)
(D) edge node [above,right] {$o+o^\prime$} (E)
(F) edge node [above,left]{$i+i^\prime$} (E);
\end{tikzpicture}
\end{aligned}
\]

%All told, we have the following result due to Rosebrugh, Sabadini and Walters in \cite{RSW}.

%\begin{thm}
%For any set $L$, there is a symmetric monoidal category {\bf L-Circ} where the objects are finite sets, the morphisms are %isomorphism classes of $L$-circuits, and composition and the tensor product are defined as above.
%\end{thm}

%The result of this paper allows us to categorify a theorem of Baez, Coya and Rebro in \cite{BCR} by instead of considering %isomorphism classes, take morphisms to be $L$-circuits and define 2-morphisms between two $L$-circuits to be globular 2-morphisms %between $L$-circuits which amount to maps between the apices of the two cospans such that the required diagrams commute. This %gives us the following result:

Rosebrugh, Sabadini and Walters \cite{RSW} constructed a symmetric monoidal category where objects are finite sets and morphisms are isomorphism classes of $L$-circuits (see also \cite{BCR,BF}). Theorem \ref{Theorem 2.2} lets us `categorify' their result, obtaining a symmetric monoidal bicategory where the morphisms are actual $L$-circuits.

\begin{thm}
For any set $L$, there is a symmetric monoidal bicategory \define{$L$-Circ} where the objects are finite sets, the 1-morphisms are $L$-circuits, with composition and the tensor product of 1-morphisms defined as above. 2-morphisms are maps between apices of two cospans with identical feet such that the following diagrams commute.
\[
\begin{aligned}
      \xymatrix{
	& N \ar[dd]^{h} \\  
	X \ar[ur]^{i} \ar[dr]_{i^\prime} && Y \ar[ul]_{o} \ar[dl]^{o^\prime}\\
          & N'
      }
    \end{aligned},
\begin{aligned}
      \xymatrix{
	& F(N) \ar[dd]^{F(h)} \\
	I \ar[ur]^{r_{1}} \ar[dr]_{r_{2}} \\
           & F(N^\prime)
      }
    \end{aligned}
\]
\end{thm}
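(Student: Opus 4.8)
\emph{Proof sketch.} The plan is to obtain this statement as a direct application of Theorem~\ref{Theorem 2.2} to a suitable decorating functor. Take $\mathbf{C} = \mathbf{FinSet}$, which has finite colimits (binary coproducts $\sqcup$, coequalizers, hence pushouts, all staying finite), regarded as symmetric monoidal via the coproduct $+$ with unit the empty set $0$; and take $\mathbf{D} = (\mathbf{Set}, \times, 1)$. Define $F \colon \mathbf{FinSet} \to \mathbf{Set}$ by letting $F(N)$ be the set of $L$-graphs with node set $N$, i.e.\ triples $(E, (s,t), \ell)$ consisting of a finite set $E$, functions $s,t \colon E \to N$ and a labelling $\ell \colon E \to L$, and sending a function $f \colon N \to N'$ to the pushforward $F(f) \colon (E, (s,t), \ell) \mapsto (E, (f\circ s, f\circ t), \ell)$. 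This is clearly functorial. A morphism $1 \to F(N)$ is then exactly a choice of $L$-graph on $N$, so an $F$-decorated cospan $X \to N \leftarrow Y$ is precisely an $L$-circuit from $X$ to $Y$, and a globular map of $F$-decorated cospans is exactly a node map $h \colon N \to N'$ with $i' = h\circ i$, $o' = h\circ o$ and $F(h)\circ r_1 = r_2$ — that is, one carrying the $L$-graph on $N$ to the $L$-graph on $N'$ exactly as in the displayed diagrams above.

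Next I would equip $F$ with a symmetric lax monoidal structure. The comparison $\phi_{M,N} \colon F(M) \times F(N) \to F(M + N)$ sends a pair of $L$-graphs to their disjoint union: edge set $E \sqcup E'$, source and target maps $(s \sqcup s',\, t \sqcup t')$ landing in $M + N$, and labelling $(\ell, \ell')$; the unit comparison $\epsilon \colon 1 \to F(0)$ picks out the unique (empty) $L$-graph on the empty set. Naturality of $\phi$ in both arguments is immediate since pushforward along $f + f'$ commutes with disjoint union of edge sets, and the associativity, unit and braiding coherence axioms hold because disjoint union of edge sets is associative and unital up to the chosen coherence isomorphisms of $+$, and the symmetry of $+$ on $\mathbf{FinSet}$ corresponds to interchanging the two summands of edges and of nodes. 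All of these verifications are routine; this is exactly Fong's decorated-cospan setup for $L$-circuits \cite{Fon}.

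Applying Theorem~\ref{Theorem 2.2} to this $F$ yields a symmetric monoidal bicategory $F\mathrm{Cospan}(\mathbf{FinSet})$, and it remains to identify this with the asserted $L$-Circ. Its objects are finite sets, its $1$-morphisms are $F$-decorated cospans, i.e.\ $L$-circuits, and its $2$-morphisms are exactly the globular maps described above, so the only real content is that the composition and tensor product furnished by Theorem~\ref{Theorem 2.2} restrict to the operations on $L$-circuits pictured in this section. For composition, the theorem takes the pushout $N +_Y N'$ of apices and, by the decoration-composition formula, decorates it with $F(J_{N,N'})$ applied to $\phi_{N,N'}$ of the two $L$-graphs — i.e.\ form the disjoint union of edge sets and push the source/target maps forward along $N + N' \to N +_Y N'$ — which is precisely the "glue the $L$-graphs along $Y$" picture; for the tensor product, the theorem takes coproducts of cospans together with $\phi$, i.e.\ disjoint union of the $L$-graphs, which is the "place side by side" picture. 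Carrying out this identification is the main obstacle, and it is essentially a bookkeeping exercise; the one genuinely delicate point is set-theoretic, namely that to make each $F(N)$ an honest set one should restrict the edge set $E$ to range over a fixed skeleton of $\mathbf{FinSet}$ (equivalently, work with isomorphism classes of $L$-graphs), and one must check that $F$, $\phi$ and $\epsilon$ remain well defined and satisfy all the coherence axioms under this convention — which they do, as every operation involved (pushforward, disjoint union) is natural in the edge set. This completes the proof. $\qed$
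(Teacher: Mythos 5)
Your proposal is correct and follows essentially the same route as the paper: define $F\colon \mathbf{FinSet}\to\mathbf{Set}$ sending a finite set to its set of $L$-graphs, equip it with the lax monoidal structure given by disjoint union and the empty $L$-graph, and invoke Theorem~\ref{Theorem 2.2}. You simply spell out more of the details the paper leaves implicit (the action of $F$ on morphisms, the identification of composition and tensoring with the pictured operations, and the minor size issue in defining $F(N)$ as an honest set), all of which are consistent with the paper's argument.
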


\begin{proof}
We have a symmetric lax monoidal functor $F \colon \bold{FinSet} \to \bold{Set}$ that maps each finite set $X$ to $F(X)$, which is the set of all possible $L$-circuits on $X$. The functor $F$ is symmetric lax monoidal since we have maps $F(X) \times F(Y) \xrightarrow{\phi_{X,Y}} F(X+Y)$ that send a pair of $L$-circuits to the tensor product of the two $L$-circuits which gives rise to an $L$-circuit on $X+Y$, and $1 \to F(\emptyset)$ given by the empty $L$-circuit. We also have that $\bold{FinSet}$ is finitely cocomplete and $\bold{Set}$ is symmetric monoidal. The result follows from Thereom \ref{Theorem 2.2}.
\end{proof}

We can also obtain this theorem from the work of Stay \cite{Stay} if we treat \define{$L$-Circ} as a sub-bicategory of the symmetric monoidal bicategory of graphs.  His work even implies that \define{$L$-Circ} is a `compact' symmetric monoidal bicategory. 

For an example that cannot be handled using Stay's technique, we turn to the theory of dynamical systems. A dynamical system is a vector field, thought of as a system of first-order ordinary differential equations.  Chemical reaction networks give dynamical systems that are algebraic vector fields on $\mathbb{R}^n$: that is, vector fields with polynomial coefficients. In studying chemical reaction networks with inputs and outputs, Baez and Pollard \cite{BP} constructed a symmetric monoidal category where the morphisms are `open' dynamical systems.  We can promote this to a symmetric monoidal bicategory as follows.

We define a symmetric lax monoidal functor $D:\bold{FinSet} \to \bold{Set}$ as follows.  For any finite set $S$, let $D(S)$ be the set of all algebraic vector fields on $\mathbb{R}^S$:
$$D(S) = \{v_{S} \colon \mathbb{R}^{S} \to \mathbb{R}^{S} : \; v_S \textrm{ is algebraic} \}$$ 
For any function $f \colon S \to S^\prime$ between finite sets, define $D(f) \colon D(S) \to D(S^\prime)$ by 
$$D(f)(v_{S}) =f_{\ast}v_{S}f^\ast,$$ 
where $f^\ast \colon \mathbb{R}^{S^\prime} \to \mathbb{R}^{S}$ is the pullback defined by 
$$f^\ast (c_{S})(\sigma)=c_{S^\prime}(f(\sigma))$$ 
and $f_{\ast} \colon \mathbb{R}^S \to \mathbb{R}^{S^\prime}$ is the pushforward defined by 
$$f_{\ast}(c_{S^\prime})(\sigma^\prime) = \sum_{\{\sigma \in S : \; f(\sigma) = \sigma^\prime\}} c_{S}(\sigma)$$
where $c_{S} \in \mathbb{R}^{S}$ and $c_{S^\prime} \in \mathbb{R}^{S^\prime}$.  The functoriality of $D$ follows from the pushforward being covariant and the pullback being contravariant.  We make $D$ into a lax monoidal functor using the the unique map $\phi_{1} \colon 1 \to F(\emptyset)$ and the map $\phi_{S,S^\prime} \colon D(S) \times D(S^\prime) \to D(S+S^\prime)$ that sends a pair of vector fields $v_{S} \colon \mathbb{R}^{S} \to \mathbb{R}^{S}$ and $v_{S^\prime} \colon \mathbb{R}^{S^\prime} \to \mathbb{R}^{S^\prime}$ to $v_{S+S^\prime} \colon \mathbb{R}^{S+S^\prime} \to \mathbb{R}^{S+S^\prime}$ defined using the canonical isomorphism $\mathbb{R}^{S} \times \mathbb{R}^{S^\prime} \cong \mathbb{R}^{S+S^\prime}$. Furthermore, if we denote the braidings of $(\bold{FinSet},+,\emptyset)$ and $(\bold{Set},\times,1)$ by $\beta$, we have that the functor $D$ is symmetric as the following diagram commutes:

\[
\begin{tikzpicture}[scale=1.5]
\node (A) at (1,1) {$D(S) \times D(S^\prime)$};
\node (C) at (4,1) {$D(S^\prime) \times D(S)$};
\node (A') at (1,-1) {$D(S+S^\prime)$};
\node (C') at (4,-1) {$D(S^\prime + S)$};
\path[->,font=\scriptsize,>=angle 90]
(A) edge node[left]{$\phi_{S,S^\prime}$} (A')
(C) edge node[right]{$\phi_{S^\prime,S}$} (C')
(A) edge node[above]{$\beta_{D(S),D(S^\prime)}$} (C)
(A') edge node[above]{$D(\beta_{S,S^\prime})$} (C');
\end{tikzpicture}
\]
We have the following result due to Baez and Pollard \cite{BP}.
%Given a composition of cospans

%\begin{center}
%\begin{tikzpicture}[->,>=stealth',node distance=2cm, auto]
% \node (A) {$X$};
% \node (A1) [above of=A,right of=A] {$S$};
% \node (A2) [below of=A1,right of=A1] {$Y$}; 
% \node (S) [above of=A2,right of=A2] {$S^\prime$};
% \node (B) [below of=S,right of=S] {$Z$};
% \node (AS) [above of=A1,right of=A1] {$S+_{Y}S^\prime$};
% \draw[->] (A) to node {$i$} (A1);
% \draw[->] (A2) to node [swap] {$o$} (A1);
% \draw[->] (A2) to node{$i^\prime$}(S);
%\draw[->] (B) to node [swap]{$o^\prime$}(S);
% \draw[->] (A1) to node{$f$}(AS);
%\draw[->] (S) to node [swap] {$f^\prime$}(AS);
%\end{tikzpicture}
%\end{center}
%we have a natural map $\psi\colon S+S^\prime \to S+_{Y}S^\prime$ from the coproduct to the pushout which induces the map %%$D(\psi) \colon D(S+S^\prime) \to D(S+_{Y}S^\prime)$. 

\begin{thm}
There is a decorated cospan category {\bf VectField} where an object is a finite set and a morphism is an isomorphism class of cospans of finite sets decorated by vector fields.
\end{thm}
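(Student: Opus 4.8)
The plan is to realize $\mathbf{VectField}$ as the decorated cospan category $D\mathrm{Cospan}(\mathbf{FinSet})$ produced by Fong's theorem on decorated cospan categories recalled above (see \cite{Fon}), applied to the functor $D$ just constructed. All of that theorem's hypotheses are now in place: $\mathbf{FinSet}$ is a category with finite colimits (it admits chosen pushouts and coproducts), $(\mathbf{Set},\times)$ is a symmetric monoidal category, and $D\colon\mathbf{FinSet}\to\mathbf{Set}$ is a functor---its functoriality following from the contravariance of $f\mapsto f^{\ast}$, the covariance of $f\mapsto f_{\ast}$, and the fact that a composite of algebraic maps is again algebraic, so that $D(f)(v_{S})=f_{\ast}v_{S}f^{\ast}$ genuinely lies in $D(S')$.

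The substance of the argument is to confirm that $D$ is \emph{symmetric lax monoidal}. The comparison maps are $\phi_{1}\colon 1\to D(\emptyset)$, which is forced since $\mathbb{R}^{\emptyset}$ is a one-point space and hence $D(\emptyset)$ is a singleton, and $\phi_{S,S'}\colon D(S)\times D(S')\to D(S+S')$, induced by the canonical isomorphism $\mathbb{R}^{S}\times\mathbb{R}^{S'}\cong\mathbb{R}^{S+S'}$; naturality of $\phi_{S,S'}$ in $S$ and $S'$ follows at once from the definitions of pushforward and pullback together with the naturality of that isomorphism. For the lax monoidal coherence, the associativity hexagon holds because both composites send $(v_{S},v_{S'},v_{S''})$ to the vector field on $\mathbb{R}^{S+S'+S''}$ transported from $v_{S}\times v_{S'}\times v_{S''}$ along the coherent isomorphism $\mathbb{R}^{S}\times\mathbb{R}^{S'}\times\mathbb{R}^{S''}\cong\mathbb{R}^{S+S'+S''}$, and the two unit triangles reduce to $\mathbb{R}^{S+\emptyset}\cong\mathbb{R}^{S}\cong\mathbb{R}^{\emptyset+S}$; the symmetry axiom is exactly the commuting square displayed immediately before the statement.

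With $D$ thus shown to satisfy the hypotheses, Fong's theorem yields a symmetric monoidal category whose objects are finite sets and whose morphisms are isomorphism classes of $D$-decorated cospans---that is, cospans $X\to N\leftarrow Y$ of finite sets together with a map $1\to D(N)$, i.e.\ a choice of algebraic vector field on $\mathbb{R}^{N}$---with composition given by pushout of cospans, the decoration of a composite being $D(J_{N,N'})$ applied to the image of the two chosen vector fields under $\phi_{N,N'}$. Setting $\mathbf{VectField}$ equal to this category completes the proof. The only real work is the bookkeeping of the lax monoidal coherence for $D$, which I expect to be the main (if routine) obstacle; everything else is a direct invocation of Fong's theorem.
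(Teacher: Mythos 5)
Your proposal is correct and matches the paper's intended route: the paper states this theorem as a citation to Baez--Pollard without writing out a proof, but the construction of the symmetric lax monoidal functor $D\colon\mathbf{FinSet}\to\mathbf{Set}$ given immediately beforehand (and the proof of the categorified version, which simply applies Theorem \ref{Theorem 2.2} to $D$) makes clear that the argument is exactly yours---verify $D$ is symmetric lax monoidal and invoke Fong's decorated cospan theorem. Your additional checks (that $D(f)(v_S)$ is again algebraic, that $D(\emptyset)$ is a singleton, and the coherence diagrams) are the right routine details to supply.
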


%As with the previous example, our result here categorifies the above theorem by taking morphisms to be cospans of finite sets %whose apices are decorated with smooth vector fields and 2-morphisms to be globular 2-morphisms to between two cospans with %identical feet such that the usual two diagrams commute

We can categorify the above theorem by taking a 2-morphism between decorated cospans to be a map $h \maps S \to T$ between their apices making the usual diagrams commute:

\[
\begin{aligned}
      \xymatrix{
	& S \ar[dd]^{h} \\  
	X \ar[ur]^{i} \ar[dr]_{i^\prime} && Y \ar[ul]_{o} \ar[dl]^{o^\prime}\\
          & T
      }
    \end{aligned},
\begin{aligned}
      \xymatrix{
	& D(S) \ar[dd]^{D(h)} \\
	I \ar[ur]^{r_{1}} \ar[dr]_{r_{2}} \\
           & D(T)
      }
    \end{aligned}
\]
As $h:S \to T$ is a function, $D(h)$ induces a vector field on $\mathbb{R}^T$ given a vector field on $\mathbb{R}^S$ as prescribed above.

\begin{thm}
There is a symmetric monoidal bicategory {\bf VectField} where an object is a finite set, a morphism is a cospan of finite sets decorated by a vector field and a 2-morphism is a map of finite sets such that the above two diagrams commute.
\end{thm}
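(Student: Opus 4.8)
The plan is to obtain this theorem as a direct instance of Theorem~\ref{Theorem 2.2}, applied to $\mathbf{C}=\mathbf{FinSet}$, $\mathbf{D}=(\mathbf{Set},\times,1)$, and $F=D$. So the first thing to record is that the hypotheses on the ambient categories hold: $\mathbf{FinSet}$ has all finite colimits, and we fix chosen pushouts and coproducts in it; and $(\mathbf{Set},\times,1)$ is visibly a symmetric monoidal category. All that then remains is to check that $D$, together with the structure maps $\phi_{1}\colon 1\to D(\emptyset)$ and $\phi_{S,S'}\colon D(S)\times D(S')\to D(S+S')$ described above, is a symmetric lax monoidal functor.

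The substantive step is therefore the verification that $D$ is symmetric lax monoidal, and this is where the real work lies. First, functoriality of $D$: since $f^{\ast}$ and $f_{\ast}$ are $\mathbb{R}$-linear maps, the composite $f_{\ast}v_{S}f^{\ast}$ of an algebraic vector field with two linear maps again has polynomial coefficients, so $D(f)$ is a well-defined function $D(S)\to D(S')$; functoriality then follows from $(g\circ f)^{\ast}=f^{\ast}g^{\ast}$ and $(g\circ f)_{\ast}=g_{\ast}f_{\ast}$. Next, the lax monoidal coherence diagrams: under the canonical isomorphisms $\mathbb{R}^{S}\times\mathbb{R}^{S'}\cong\mathbb{R}^{S+S'}$, the associativity hexagon for $\phi_{S,S'}$ reduces to the (strict) associativity of disjoint union of finite sets together with the obvious compatibility of ``side-by-side'' vector fields, and the two unit diagrams reduce to $\emptyset$ being a strict unit for $+$; these are routine. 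Finally, the symmetry square is exactly the one already displayed in the excerpt: it commutes because the braiding of $\mathbf{Set}$ corresponds, under $\mathbb{R}^{S}\times\mathbb{R}^{S'}\cong\mathbb{R}^{S+S'}$, to the braiding $S+S'\cong S'+S$ of $\mathbf{FinSet}$.

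With $D$ established as a symmetric lax monoidal functor, Theorem~\ref{Theorem 2.2} immediately produces a symmetric monoidal bicategory $D\mathrm{Cospan}(\mathbf{FinSet})$ whose objects are finite sets, whose $1$-morphisms are $D$-decorated cospans, and whose $2$-morphisms are globular maps of $D$-decorated cospans. Unwinding the definitions identifies this with the desired $\mathbf{VectField}$: a $D$-decorated cospan $X\to N\leftarrow Y$ with a map $s\colon 1\to D(N)$ is precisely a cospan of finite sets whose apex is decorated by an algebraic vector field on $\mathbb{R}^{N}$; a globular map of such is exactly a map $h\colon N\to N'$ of apices, compatible with the legs, with $D(h)$ sending the decoration on $N$ to that on $N'$, i.e.\ the two triangles displayed in the statement commute; and the composition and tensor product of $1$-morphisms specialize, respectively, to pushout of apices equipped with the induced vector field and to disjoint union of apices equipped with the side-by-side vector field. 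I expect the only genuine obstacle to be the verification in the previous paragraph that $D$ is symmetric lax monoidal --- in particular that pushforward and pullback along functions of finite sets preserve the property of a vector field being algebraic, and that the associativity hexagon for $\phi$ commutes --- after which the theorem is a direct application of Theorem~\ref{Theorem 2.2}.
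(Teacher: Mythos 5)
Your proposal is correct and follows exactly the paper's route: the paper's entire proof is the one-line application of Theorem~\ref{Theorem 2.2} to the symmetric lax monoidal functor $D\colon \mathbf{FinSet}\to\mathbf{Set}$, with the verification that $D$ is symmetric lax monoidal carried out in the surrounding discussion just as you do. Your additional observation that $f_{\ast}v_{S}f^{\ast}$ remains algebraic because $f^{\ast}$ and $f_{\ast}$ are linear is a worthwhile detail the paper leaves implicit, but it does not change the argument.
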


\begin{proof}
We apply Theorem \ref{Theorem 2.2} to the symmetric lax monoidal functor $D \colon \bold{FinSet} \to \bold{Set}$ as previously described.
\end{proof}

\section{Acknowledgements}

I would like to thank John Baez for teaching me everything that I know about category theory, and not only. No one has had a more profound influence on my mathematical career. I would also like to thank Michael Shulman, whose idea it was to approach this problem from the perspective of double categories and comma objects, as well as how to go about it and for an incredibly helpful correspondence via email.

\end{document}
                                                                                                                                                                                                                                                                                                                                                                                                                                                                                                                                                                                                                                                                                                                                                                                                                                                                                                                                                                                                                                                                                                                                                                                                                                                                                                                                                                                                                                                                                                                                                                                                                                                                                                                                                                                                                                                                                                                                                                                                                                                                                                                                                                                                                                                                                                                                                                                                                                                                                                                                                                                                                                                                                                                                                                                                                                                                                                                                                                                                                                                                                                                                                                                                                                                                                                                                                                                                                                                                                                                                                                                                                                                                                                                                                                                                                                                                                                                                                                                                                                                                                                                                                                                                                                                                                                                                                                                                                                                                                                                                                                                                                                                                                                                                                                                                                                                                                                                                                                                                                                                                                                                                                                                                                                                                                                                                        